\newtheorem{thm}{Theorem}[section]
\newtheorem{theorem}[thm]{Theorem}
\newtheorem{lemma}[thm]{Lemma}
\newtheorem{proposition}[thm]{Proposition}
\theoremstyle{definition}
\newtheorem{definition}[thm]{Definition}
\newtheorem{example}[thm]{Example}
\newtheorem{observation}[thm]{Observation}
\newtheorem{remark}[thm]{Remark}
\begin{document}

\newcommand{\comment}[1]{{\color{blue}\rule[-0.5ex]{2pt}{2.5ex}}
\marginpar{\scriptsize\begin{flushleft}\color{blue}#1\end{flushleft}}}

\newcommand{\be}{\begin{equation}}
\newcommand{\ee}{\end{equation}}
\newcommand{\bea}{\begin{eqnarray}}
\newcommand{\eea}{\end{eqnarray}}
\newcommand{\bean}{\begin{eqnarray*}}
\newcommand{\eean}{\end{eqnarray*}}

\newcommand{\kk}{{k}}
\newcommand{\R}{\mathbb{R}}
\newcommand{\C}{\mathbb{C}}
\newcommand{\Z}{\mathbb{Z}}
\newcommand{\N}{\mathbb{N}}
\newcommand{\bM}{\mathbb{M}}
\newcommand{\g}{\mathfrak{G}}
\newcommand{\epsi}{\varepsilon}

\newcommand{\hs}{\hfill\square}
\newcommand{\hbs}{\hfill\blacksquare}

\newcommand{\bp}{\mathbf{p}}
\newcommand{\bmax}{\mathbf{m}}
\newcommand{\bT}{\mathbf{T}}
\newcommand{\bU}{\mathbf{U}}
\newcommand{\bP}{\mathbf{P}}
\newcommand{\bA}{\mathbf{A}}
\newcommand{\bm}{\mathbf{m}}
\newcommand{\bIP}{\mathbf{I_P}}

\newcommand{\cA}{\mathcal{A}}
\newcommand{\cB}{\mathcal{B}}
\newcommand{\cN}{\mathcal{N}}
\newcommand{\cC}{\mathcal{C}}
\newcommand{\cI}{\mathcal{I}}
\newcommand{\Ogh}{\tilde{\mathcal{O}}(G/P)}
\newcommand{\cM}{\mathcal{M}}
\newcommand{\cO}{\mathcal{O}}
\newcommand{\cG}{\mathcal{G}}
\newcommand{\cU}{\mathcal{U}}
\newcommand{\cJ}{\mathcal{J}}
\newcommand{\cF}{\mathcal{F}}
\newcommand{\cV}{\mathcal{V}}
\newcommand{\cP}{\mathcal{P}}
\newcommand{\ep}{\mathcal{E}}
\newcommand{\E}{\mathcal{E}}
\newcommand{\cH}{\mathcal{O}}
\newcommand{\cPO}{\mathcal{PO}}
\newcommand{\cl}{\ell}
\newcommand{\cFG}{\mathcal{F}_{\mathrm{G}}}
\newcommand{\cHG}{\mathcal{H}_{\mathrm{G}}}
\newcommand{\Gal}{G_{\mathrm{al}}}
\newcommand{\cQ}{G_{\mathcal{Q}}}

\newcommand{\ri}{\mathrm{i}}
\newcommand{\re}{\mathrm{e}}
\newcommand{\rd}{\mathrm{d}}

\newcommand{\rGL}{\mathrm{GL}}
\newcommand{\rSU}{\mathrm{SU}}
\newcommand{\rSL}{\mathrm{SL}}
\newcommand{\rSO}{\mathrm{SO}}
\newcommand{\rOSp}{\mathrm{OSp}}
\newcommand{\rSpin}{\mathrm{Spin}}
\newcommand{\rsl}{\mathrm{sl}}
\newcommand{\rM}{\mathrm{M}}
\newcommand{\rdiag}{\mathrm{diag}}
\newcommand{\rP}{\mathrm{P}}
\newcommand{\rdeg}{\mathrm{deg}}

\newcommand{\M}{\mathrm{M}}
\newcommand{\End}{\mathrm{End}}
\newcommand{\Hom}{\mathrm{Hom}}
\newcommand{\diag}{\mathrm{diag}}
\newcommand{\rspan}{\mathrm{span}}
\newcommand{\rank}{\mathrm{rank}}
\newcommand{\Gr}{\mathrm{Gr}}
\newcommand{\ber}{\mathrm{Ber}}
\newcommand{\coinv}{\mathrm{co}}

\newcommand{\fsl}{\mathfrak{sl}}
\newcommand{\fg}{\mathfrak{g}}
\newcommand{\ff}{\mathfrak{f}}
\newcommand{\fgl}{\mathfrak{gl}}
\newcommand{\fosp}{\mathfrak{osp}}
\newcommand{\fm}{\mathfrak{m}}

\newcommand{\str}{\mathrm{str}}
\newcommand{\Sym}{\mathrm{Sym}}
\newcommand{\tr}{\mathrm{tr}}
\newcommand{\defi}{\mathrm{def}}
\newcommand{\Ber}{\mathrm{Ber}}
\newcommand{\spec}{\mathrm{Spec}}
\newcommand{\sschemes}{\mathrm{(sschemes)}}
\newcommand{\sschemeaff}{\mathrm{ {( {sschemes}_{\mathrm{aff}} )} }}
\newcommand{\rings}{\mathrm{(rings)}}
\newcommand{\Top}{\mathrm{Top}}
\newcommand{\sarf}{ \mathrm{ {( {salg}_{rf} )} }}
\newcommand{\arf}{\mathrm{ {( {alg}_{rf} )} }}
\newcommand{\odd}{\mathrm{odd}}
\newcommand{\alg}{\mathrm{(alg)}}
\newcommand{\sa}{\mathrm{(salg)}}
\newcommand{\sets}{\mathrm{(sets)}}
\newcommand{\SA}{\mathrm{(salg)}}
\newcommand{\salg}{\mathrm{(salg)}}
\newcommand{\varaff}{ \mathrm{ {( {var}_{\mathrm{aff}} )} } }
\newcommand{\svaraff}{\mathrm{ {( {svar}_{\mathrm{aff}} )}  }}
\newcommand{\ad}{\mathrm{ad}}
\newcommand{\Ad}{\mathrm{Ad}}
\newcommand{\pol}{\mathrm{Pol}}
\newcommand{\Lie}{\mathrm{Lie}}
\newcommand{\Proj}{\mathrm{Proj}}
\newcommand{\rGr}{\mathrm{Gr}}
\newcommand{\rFl}{\mathrm{Fl}}
\newcommand{\rPol}{\mathrm{Pol}}
\newcommand{\rdef}{\mathrm{def}}

\newcommand{\uspec}{\underline{\mathrm{Spec}}}
\newcommand{\uproj}{\mathrm{\underline{Proj}}}

\newcommand{\sym}{\cong}
\newcommand{\al}{\alpha}
\newcommand{\lam}{\lambda}
\newcommand{\de}{\delta}
\newcommand{\dd}{\delta}
\newcommand{\D}{\Delta}
\newcommand{\s}{\sigma}
\newcommand{\sig}{\sigma}
\newcommand{\lra}{\longrightarrow}
\newcommand{\ga}{\gamma}
\newcommand{\ra}{\rightarrow}
\newcommand{\tE}{\widetilde E}

\newcommand{\NOTE}{\bigskip\hrule\medskip}

\newcommand{\beq}{\begin{equation}}
\newcommand{\eeq}{\end{equation}}
\newcommand{\ddet}{\mathrm{det}}
\newcommand{\proj}{\mathrm{proj}}
\newcommand{\pr}{\mathrm{pr}}
\newcommand{\Ubar}{\overline{U}}
\newcommand{\cT}{\mathcal{T}}
\newcommand{\cL}{\mathcal{L}}
\newcommand{\cE}{\mathcal{E}}

\newcommand{\zero}[1]{{#1}_{\scriptscriptstyle{(0)}}}
\newcommand{\one}[1]{{#1}_{\scriptscriptstyle{(1)}}}
\newcommand{\two}[1]{{#1}_{\scriptscriptstyle{(2)}}}
\newcommand{\onef}[1]{{#1}_{\scriptscriptstyle{(1_\F)}}}
\newcommand{\twof}[1]{{#1}_{\scriptscriptstyle{(2_\F)}}}
\newcommand{\three}[1]{{#1}_{\scriptscriptstyle{(3)}}}
\newcommand{\four}[1]{{#1}_{\scriptscriptstyle{(4)}}}
\newcommand{\five}[1]{{#1}_{\scriptscriptstyle{(5)}}}

\newcommand{\zeroc}[1]{{#1}_{\scriptscriptstyle{(\tilde 0)}}}
\newcommand{\onec}[1]{{#1}_{\scriptscriptstyle{(\tilde 1)}}}
\newcommand{\twoc}[1]{{#1}_{\scriptscriptstyle{(\tilde 2)}}}
\newcommand{\threec}[1]{{#1}_{\scriptscriptstyle{(\tilde 3)}}}
\newcommand{\fourc}[1]{{#1}_{\scriptscriptstyle{(\tilde 4)}}}
\newcommand{\fivec}[1]{{#1}_{\scriptscriptstyle{(\tilde 5)}}}

\newcommand{\mzero}[1]{{#1}_{\scriptscriptstyle{(0)}}}
\newcommand{\mone}[1]{{#1}_{\scriptscriptstyle{(-1)}}}
\newcommand{\mtwo}[1]{{#1}_{\scriptscriptstyle{(-2)}}}
\newcommand{\mthree}[1]{{#1}_{\scriptscriptstyle{(-3)}}}
\newcommand{\mfour}[1]{{#1}_{\scriptscriptstyle{(-4)}}}

\newcommand{\ot}{\otimes}
\renewcommand{\cot}{\gamma}
\newcommand{\co}[2]{\cot\left({#1}\ot{#2}\right)}

\medskip

\centerline{\huge{\bf Quantum
principal bundles} }

\medskip
\centerline{\huge{\bf on projective bases}}

\medskip
\medskip
\medskip
\medskip
\centerline{\Large{{P. Aschieri$,^{\!1,2,3}$ R. Fioresi$,^{\!4}$ E. Latini$^{4}$} }}

\vskip 0.5 cm
\centerline{$^1${\sl Dipartimento di Scienze e Innovazione 
Tecnologica, Universit\`a del Piemonte Orientale}}

\centerline{{\sl  
Viale T. Michel - 15121, Alessandria, Italy}}

\vskip 0.1 cm

\centerline{$^2${\sl Istituto Nazionale di Fisica Nucleare, Sezione di Torino,
  via P. Giuria 1, 10125 Torino}}

\vskip 0.1 cm
\centerline{$^3${\sl Arnold-Regge Center, Torino, via P. Giuria 1, 10125, Torino, Italy}}
\centerline{\texttt{paolo.aschieri@uniupo.it}}

\vskip 0.5 cm

\centerline{
$^4${\sl Dipartimento di Matematica, Universit\`{a} di
Bologna}}

\centerline{\sl Piazza di Porta S. Donato 5, I-40126 Bologna, Italy}

\centerline{\texttt{rita.fioresi@UniBo.it}, \texttt{emanuele.latini@UniBo.it}}
\bigskip

\bigskip

\begin{abstract}
 The purpose of this paper is to propose a sheaf theoretic approach to 
the theory of quantum principal bundles over non affine bases.
We study noncommutative  principal
bundles corresponding to $G \to G/P$, where $G$ is semisimple group and $P$
a parabolic subgroup.
\end{abstract}

\tableofcontents
\section{Introduction}

A quantum principal bundle is usually described as an algebra
extension $B\subset A$, with $A$ the ``total space'' algebra on which
coacts a quantum group, and $B$ the
``base space'' subalgebra of coinvariant elements. 
Local triviality is encompassed in the notion of  {locally cleft extension}.

In the commutative setting, 
this picture proves to be extremely effective when the base space $M$
is {\sl affine}, that is, when the algebra $B$ is containing all of
the information to reconstruct the base space. For a projective
base, however, the coinvariant ring $B$ consists of just the
constants, so it is not the object of interest anymore.

In this paper we take a very general point of
view on the definition of
{\sl quantum principal bundle} (see Definition \ref{hg-def}), so that we can
accomodate the affine setting mentioned above, but also
the case of projective base, together with a preferred projective
embedding. In our definition a quantum principal bundle is a locally
cleft sheaf of $H$ comodule algebras 
for a given Hopf algebra $H$. In the commutative setting, when the base is affine 
the algebra of global sections (regular functions on the total space)
is an Hopf-Galois extension;  when the base is a projective variety
our notion still makes sense and it actually gives the correct point
of view to proceed to the quantization.

The definition is tested on an important
special case, that when $M$ is the quotient of a 
semisimple group $G$ and a parabolic subgroup $P$. In this case, in fact,
$M=G/P$ is projective, and we can effectively substitute the coinvariant
ring $B$ with
the homogeneous coordinate ring  $\tilde{\cO}(G/P)$ of $G/P$ with respect to a chosen
projective embedding, corresponding to a line bundle $\cL$. 
The line bundle $\cal L$ can be recovered more algebraically via a
character $\chi$ of $P$; the corresponding sections are the {\sl
  semi-coinvariant} elements of  $\cO(G)$  with respect to $\chi$
and generate the homogeneous coordinate ring $\tilde{\cO}(G/P)$ of $G/P$.
In this case the locally cleft sheaf of $H=\cO(P)$-comodule algebras,
denoted $\cF$,  gives the subsheaf of coinvariants $\cF^{\coinv\,\cO(P)}$ that is the
structure sheaf $\cO_{G/P}$ of $G/P$. The relation between this latter
and the homogeneous coordinate ring $\tilde{\cO}(G/P)$ is then as
usual by considering projective localizations (zero degree
subalgebras of the localizations) of  $\tilde{\cO}(G/P)$.

Similarly, in the quantum case, as in \cite{cfg, fg1} we obtain
the quantum homogeneous coordinate ring $\tilde{\cO}_q(G/P)$ as the $\cO_q(P)$-semi-coinvariant elements of the quantum group $\cO_q(G)$, the quantization of the semisimple
group $G$.
Assuming Ore conditions for localizations, we then proceed to obtain
from  $\tilde{\cO}_q(G/P)$ and $\cO_q(G)$ a suitable sheaf $\cF$ of $\cO_q(P)$-comodule
algebras, which will be the quantum principal bundle over the quantum
space obtained through $\tilde{\cO}_q(G/P)$. More
explicitly, the coinvariant subsheaf $\cF^{\coinv\,\cO_q(P)}$ will be
the quantum structure sheaf associated with the (noncommutative)
projective localizations of  $\tilde{\cO}_q(G/P)$.

The quantization of the flag variety $G/P$ and its noncommutative
geomety has recently attracted a lot of attention. The
theory, also following the remarkable
classification of differential calculi over irreducible quantum flag manifolds
in \cite{HK1,HK2},  has been conspicuously developed in the past
years, see for example
\cite{DD, KLS, lz,  ColinROB, ROB, dl}. In particular, the study of
quantum projective space as a quantum homogeneous space has proven
fruitful, however, it has mainly concerned  quantum projective space
as the base space of a quantum principal $U(N-1)$-bundle with
quantum $SU(N)$ total space, i.e., a study not in the projective context. Indeed,
despite the progress on quantum
principal bundles \cite{brz1, bh, brz2,  HKMZ}, the projective
setting, describing quantum versions of  principal bundles $G\to G/P$,
with $P$ parabolic, is yet to be fully understood. The aim of this paper is
to provide a key step in this direction,
 together with an appropriate setting for a future differential calculus on such quantizations.
\\

We summarize the main results by explaining the organization of the paper.

In Section 2 we recall basic notions in Hopf-Galois extensions,
including the inspiring sheaf approach of \cite{pflaum, cipa}.  We then present
our sheaf theoretic definition of quantum principal bundle. We also provide
the example of $\rSL_2(\C)/P$ both in the classical and in the quantum
setting. This serves also as motivation and preparation for the
general theory we develop in later sections.

In Section \ref{qproj-sec}
we discuss quantum homogenous projective
varieties, mainly following
\cite[\S 2]{cfg}.
Starting from a quantum section $d\in \cO_q(G)$, quantum version of the lift
to $\cO(G)$ of the character $\chi$ of $P$ defining the line bundle $\cal
L$ giving the projective embedding of $G/P$, we construct  the
homogeneous ring $\tilde{\cO}_q(G/P)$. 

In Section \ref{QPB},
we develop a general theory for quantum
principal bundles on homogeneous projective varieties. We construct the sheaf $\cF$ of
$\cO_q(P)$-comodule algebras on the
 quantum projective variety $\tilde{\cO}_q(G/P)$ by local data, that is
by considering suitable projective localizations of
$\tilde{\cO}_q(G/P)$, obtained via a corresponding quantum section $d\in\cO_q(G)$. 
As shown in Theorem \ref{main}, if this sheaf is locally cleft we have 
a quantum principal bundle.

In Section \ref{ex-sec}, we exemplify the construction of Section
\ref{QPB} in  the case of quantum projective space.
We prove that quantum projective space
is the base space of a canonical quantum principal bundle with total
space $\cO_q(\rSL_n)$ and structure group $\cO_q(P)$ (quantum parabolic
subgroup of  $\cO_q(\rSL_n)$). 

In Section \ref{TW}, we apply and further develop the results in
\cite{abps} and show that 2-cocycle deformations (twists) of quantum
principal bundles give new quantum principal bundles. 
We construct three classes of quantum principal bundles
on quantum projective spaces.
The first two are locally cleft but not locally trivial. The
total spaces are not Hopf algebras hence they are not
quantum principal bundles on quantum {\sl homogenous} projective space as in the construction presented in Theorem \ref{main}. 
The second and third class are on multiparametric quantum projective
space, the third class being also an example of the construction in
Theorem \ref{main}, with total space the multiparametric special linear
quantum group.
\bigskip

{\bf Acknoledgements}. The authors  wish to thank Prof. T.  Brzezinski,
Prof. F. Gavarini, Prof. T. Lenagan,  Prof. Z. Skoda and  Dr. C. Pagani
for helpful comments. P.A. wishes to thank the Dipartimento di
Mathematica, Universit\`a di  Bologna for the hospitality during 
the collaboration. 
R.F. and E.L. wish to thank the Dipartimento di Scienze e Innovazione
Tecnologica, Universit\`a del Piemonte Orientale, Alessandria, for the 
hospitality during the collaboration. The work of P.A. is partially
supported by INFN, CSN4, Iniziativa Specifica GSS, and by Universit\`a del
Piemonte Orientale. P.A. is also
affiliated to INdAM, GNFM (Istituto Nazionale di Alta Matematica,
Gruppo Nazionale di Fisica Matematica).

\section{Quantum Principal bundles} 
\label{princ-sec}

In the category of locally compact Hausdorff topological spaces, a 
principal bundle is a bundle $E \to M$, with compatibility
requirements regarding the $P$-space structure, for a given
topological group $P$. 
These requirements can be effectively summarized by asking that the map 
$$
E \times  P \longrightarrow E \times_M E \qquad 
(e,p) \mapsto (e,ep)
$$ 
is a homeomorphism, with  $M=E/P$.

We can dualize this picture by replacing spaces with their
function algebras, that is we replace
$E$ with $A=C(E)$, $M$ with
$B=C(M)$ and $P$ with $H=C(P)$. The notion of principal bundle
is then replaced by that of faithfully flat Hopf-Galois extension. The
Hopf-Galois property is the freeness of the $P$-action, and amounts
to the requirement that the pull-back of the above map,  
called canonical map, 
\beq\label{can-map-def}
\chi: A\otimes_B A\to A\otimes H 
\eeq
is a bijection. The faithfully flat
property, or equivalently, the equivariant projectivity conditions
correspond to the principality of the action (see e.g. \cite{brz2}).

In the affine algebraic category we can proceed and give
the same definitions, where in place of $C(E)$, $C(M)$ and
$C(P)$ we take the coordinate
rings of $E$, $M$ and $P$. In fact,
the contravariant functor associating to affine varieties 
their coordinate ring is an equivalence of categories
(see \cite[Proposition 2.6, \S II]{ha} for more details).

However, when we turn to examine the case of projective
varieties, since the above mentioned equivalence of categories does
not hold anymore as stated, but becomes
more involved, we need to take a different approach to the
theory of principal bundles, introducing
the sheaves of functions on our geometric objects.
As it turns out, this approach, despite
its apparent complication and abstraction is very suitable for
quantization.

\subsection{The Classical description}

We start with a description of the classical setting.
  
\begin{definition}\label{p-princ}
Let $E$ and $M$ be topological spaces, $P$ a topological group and 
$\wp: E \lra M$ a continuous function. We say 
that $(E, M, \wp, P)$ is a \textit{$P$-principal bundle} 
(or {\it principal bundle} for short) with total space
$E$ and base $M$, if the following conditions hold
\begin{enumerate}
\item $\wp$ is surjective.
\item $P$ acts freely from the right on $E$.
\item $P$ acts transitively on the fiber $\wp^{-1}(m)$
of each point $m \in M$.
\item $E$ is locally trivial over $M$, i.e. there is an open covering 
$M=\cup U_i$ and homeomorphisms 
$\sigma_i:\wp^{-1}(U_i)  \lra  U_i \times P$
that are $P$-equivariant i.e., 
$\sigma_i(up)=\sigma(m)p$, $p \in P$.
\end{enumerate}
\end{definition}

We can speak of \textit{algebraic, analytic or smooth} $P$-principal
bundles, we just take the objects and the morphism
of Def. \ref{p-princ} in the appropriate
categories. Notice that $\wp$ is open.

In \cite{pflaum} Pflaum gives a sheaf theoretic characterization of
principal bundles, in the category of locally
compact topological spaces, which is very suitable for 
noncommutative geometry.

In the algebraic category, over a field $\kk$,
we can give another characterization of principal bundles,
closely related to Pflaum's one.
For the basic definitions regarding algebraic
groups we refer e.g. to \cite[\S II]{bo}, for Hopf algebras e.g. to \cite{mo},
\cite[Part VII \S 5 ]{BNCG}.

\begin{proposition} \label{pflaum-prop}
Let $\wp:E \lra M$ be a surjective morphism
of algebraic varieties, and $\cO_E$, 
$\cO_M$ the structural sheaves of $E$ and $M$ respectively.  
Let $\cF$ be the sheaf on $M$ defined by $\cF(U)=\cO_E(\wp^{-1}(U))$.
Let $P$ be an affine algebraic group, $H$ the associated Hopf algebra.
Then $E \lra M$ is a principal
bundle if and only if 
\begin{itemize}
\item $\cF$ is a sheaf of $H$ comodule algebras:
for each open $U \subset  M$ , $\cF (U )$ is a right $H$-comodule
algebra and for each open $W \subset U$ the restriction map $r_{UW} :
\cF (U) \to \cF (W)$ is a morphism of  $H$-comodule algebras;

\item There exists an open covering 
$\{U_i\}$ of $M$ such that we have the following algebra isomorphisms
\begin{enumerate}
\item $\cF(U_i)^{\coinv H}\simeq\cO_M(U_i)$ 
\item 
$\cF (U_i) \simeq \cF (U_i )^{\coinv H} \otimes H$,   as left
$\cF(U_i)^{\coinv H}$-modules and right $H$-comodules for all $i$,
\end{enumerate}
where $\cF(U_i)^{\coinv H}:=\{f \in \cF(U_i)\,|\, \delta(f) = f
\otimes 1\}\subset \cF(U_i) $ is the subalgebra of $H$-coinvariant
elements, with
$\delta: \cF(U_i) \to \cF(U_i) \otimes H$ the $H$-coaction.
\end{itemize}
\end{proposition}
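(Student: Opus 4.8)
The plan is to prove the two directions separately, translating each geometric condition in Definition \ref{p-princ} into the corresponding algebraic statement about the sheaf $\cF$ and the Hopf algebra $H$, using the fact that on affine varieties the functor $X \mapsto \cO(X)$ is a contravariant equivalence of categories.

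First I would treat the direction $(\Rightarrow)$. Assume $E \to M$ is a principal bundle. The right $P$-action on $E$ is an algebraic morphism $E \times P \to E$; dualizing gives a coaction $\cO(E) \to \cO(E) \otimes H$. Since $P$ acts fiberwise, for each open $U \subset M$ the action restricts to $\wp^{-1}(U) \times P \to \wp^{-1}(U)$, which dualizes to a coaction $\delta_U : \cF(U) \to \cF(U)\otimes H$; compatibility with restriction maps is immediate because $\wp^{-1}(W) \subset \wp^{-1}(U)$ is $P$-stable for $W \subset U$. This establishes that $\cF$ is a sheaf of $H$-comodule algebras. Next, take the trivializing cover $\{U_i\}$ from condition (4), with $P$-equivariant isomorphisms $\sigma_i : \wp^{-1}(U_i) \xrightarrow{\sim} U_i \times P$. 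We may shrink the $U_i$ so that they are affine. Applying $\cO(-)$ to $\sigma_i$ gives an isomorphism of algebras $\cF(U_i) = \cO(\wp^{-1}(U_i)) \simeq \cO(U_i) \otimes \cO(P) = \cO(U_i) \otimes H$, which is a left $\cO(U_i)$-module and right $H$-comodule isomorphism because $\sigma_i$ intertwines the $P$-actions and commutes with the projections to $U_i$. It then remains to identify $\cO(U_i)$ with $\cF(U_i)^{\coinv H}$: the coinvariants of $\cO(U_i)\otimes H$ for the coaction $\mathrm{id}\otimes\Delta$ are exactly $\cO(U_i)\otimes 1$, by counitarity of $H$. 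This proves $\cF(U_i)^{\coinv H} \simeq \cO_M(U_i)$ and $\cF(U_i) \simeq \cF(U_i)^{\coinv H} \otimes H$.

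For the converse $(\Leftarrow)$, suppose $\cF$ is a sheaf of $H$-comodule algebras with a cover $\{U_i\}$ satisfying (1) and (2). The coaction $\delta$ on $\cF(U) = \cO(\wp^{-1}(U))$ dualizes, on each affine $\wp^{-1}(U)$, to an algebraic right $P$-action, and the sheaf compatibility guarantees these glue to a global $P$-action on $E$; the inclusion $\cO_M \hookrightarrow \cF^{\coinv H}$ together with surjectivity of $\wp$ forces this action to preserve the fibers of $\wp$. Condition (2) of the Proposition, $\cF(U_i) \simeq \cO_M(U_i)\otimes H$ as left $\cO_M(U_i)$-modules and right $H$-comodules, dualizes to a $P$-equivariant isomorphism $\wp^{-1}(U_i) \simeq U_i \times P$ over $U_i$, which is exactly the local triviality in Definition \ref{p-princ}(4). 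Freeness of the $P$-action on $\wp^{-1}(U_i)$ (hence on $E$, since the $U_i$ cover $M$) and transitivity on fibers both follow from this local product structure, since $P$ acts on itself freely and transitively by translation. Thus all four conditions of Definition \ref{p-princ} hold, so $E \to M$ is a principal bundle.

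The main obstacle I anticipate is not any single algebraic manipulation but the care needed around the non-affineness of $M$ and $E$: one must pass to an affine trivializing cover (possible by refining the given cover), check that the dualization of local data genuinely reconstructs a \emph{global} morphism and action via the sheaf gluing axioms rather than just a collection of local ones, and verify that the coinvariant subsheaf $\cF^{\coinv H}$ really coincides with $\cO_M$ and not merely a subsheaf of it — this last point uses that over each $U_i$ the Hopf-Galois condition holds, equivalently that $\cF(U_i)$ is a cleft (here even trivial) extension of $\cF(U_i)^{\coinv H}$ by $H$, so that descent of coinvariants is exact. The rest is the standard dictionary between affine varieties and their coordinate rings, and between group actions and comodule structures.
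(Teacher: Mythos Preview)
Your proposal follows essentially the same route the paper sketches; the paper itself does not give a proof but leaves the details to the reader as ``a small variation of the argument given in \cite{pflaum}''.

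One step in your $(\Leftarrow)$ direction needs more care. You assert that the left $\cO_M(U_i)$-module and right $H$-comodule isomorphism $\cF(U_i)\simeq \cO_M(U_i)\otimes H$ ``dualizes to a $P$-equivariant isomorphism $\wp^{-1}(U_i)\simeq U_i\times P$ over $U_i$'', but the functor $\mathrm{Spec}$ turns only \emph{algebra} morphisms into morphisms of varieties, and condition~(2) of the Proposition does not posit an algebra isomorphism. What condition~(2) does give, via the Doi--Takeuchi theorem (Theorem~\ref{thmDT}), is that $\cF(U_i)^{\coinv H}\subset\cF(U_i)$ is cleft, hence Hopf--Galois; dualizing the bijective canonical map then yields the isomorphism $\wp^{-1}(U_i)\times P\xrightarrow{\,\sim\,}\wp^{-1}(U_i)\times_{U_i}\wp^{-1}(U_i)$, which is the torsor condition and gives freeness of the $P$-action and transitivity on fibres directly. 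Deducing Zariski local triviality (Definition~\ref{p-princ}(4)) from this is, in the algebraic category, a genuinely separate step---torsors under arbitrary affine groups need not be Zariski-locally trivial---unlike in Pflaum's topological setting where no such obstruction exists. Your parenthetical ``(here even trivial)'' in the final paragraph acknowledges but does not resolve this; the paper's own sketch is equally informal on the point, so your argument is no less complete than what the paper offers.
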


We notice that condition (1) establishes $M\simeq E/P$; 
we will identify $M$ and $E/P$, so that
correspondingly $\cF(U_i)^{\coinv H}=\cO_M(U_i)$.  Condition (2)  gives
the local triviality, the transitive action of $P$ on the fiber and the freeness
of the $P$ action on $E$. 
We leave the details of this
characterization to the reader, it will be a small
variation of the argument given in \cite{pflaum}.

\subsection{The Quantum description}

We now proceed and extend
this point of view in order to give the definition of quantum principal
bundle: it is based on \cite{pflaum}
(see also Proposition \ref{pflaum-prop}) and also on \cite{cipa}, but it
is more general since it encompasses the possibility for
the base manifold to be projective.
Furthermore, we take our category to be algebraic.

\medskip
We will work with algebras (not necessarily commutative)
over a field $\kk$ of characteristic $0$, or the ring of Laurent polynomials
$\kk_q=\kk[q,q^{-1}]$, $q$ an indeterminate.
All algebras will be unital and morphisms preserve the unit.
In particular we will work with $H$-comodule algebras $(A, \de)$,
where $\delta$ denotes the Hopf algebra coaction (frequently omitted). Hopf algebras will be with bijective antipode.

\begin{definition} \label{hg-def}
Let $(H, \Delta, \epsi, S)$ be a Hopf algebra and 
$A$ be an $H$-comodule algebra with coaction
$\delta : A \lra A \otimes H$.
Let 
\beq \label{coinvariant-def}
B:=A^{\coinv H} := \{a \in A | \delta(a) = a \otimes 1\}~.
\eeq
The  extension $A$ of the algebra $B$ is called
$H$-\textit{Hopf-Galois} (or simply \textit{Hopf-Galois}) if the 
map
$$
\chi:A \otimes_B A \lra A \otimes H, \qquad \chi=(m_A \otimes id)(id
\otimes_B \delta)
$$
(called the canonical map) is bijective. 
\end{definition}

If $E \lra M$ is a $P$-principal bundle and $E$, $M$ and $P$
are affine algebraic varieties or  differentiable manifolds, then the
algebra of functions (algebraic or differential) on $E$ and $P$ correspond
respectively to the algebras $A$ and $H$ satisfying  Definition
\ref{hg-def}. The algebra $B$ is the algebra of functions on
the base manifold $M$ (see e.g. \cite{brz2}, \cite{abps} for details).

\begin{example} 
\label{hg-trivial}
Let $B$ be an algebra with trivial right $H$-coaction, i.e.,
$\delta (b) = b\otimes 1$ for all $b\in B$.
Consider $H$ as an $H$-comodule algebra with the coaction given by the
 coproduct $\Delta$.  Then $A:=B\otimes H$ is a right $H$-comodule algebra (with the usual tensor product algebra
and right $H$-comodule structure). We have $A^{\coinv \,H}\simeq B$ and 
$\chi : (B\otimes H)\otimes_B (B\otimes H)\simeq B\otimes H\otimes
H \to B\otimes H\otimes {H}\,,~b\otimes h\otimes h^\prime 
\mapsto b\otimes h{h_1^{\prime}}\otimes
{h_2^\prime}$ is easily seen to be invertible; hence
$B\subset A=B\otimes H$ is an $H$-Hopf-Galois extension. 
\end{example}

We denote as usual by $\ell\ast j$ the \textit{convolution product} of two
linear maps
$j: H\to A$, $\ell: H\to A$. It is defined by $\ell\ast
j(h)=\ell(h_1)j(h_2)$ for all $h\in H$. A linear map $j: H\to A$ is
\textit{convolution invertible} if  it exists $j^{-1}:
H\to A$ such that $j^{-1}\ast j=j\ast j^{-1}: H\to A\,,~h\mapsto \epsi(h)1_A$.
If $A$ is a right  $H$-comodule we can require $j: H\to A$ to be a
right $H$-comodule map where $H$ has $H$-comodule structure given by
$\Delta$, i.e.,  $\delta \circ j = ( j \otimes id) \circ
\Delta$.

\begin{definition}\label{cleft-def}
Let $H$ be a Hopf algebra and $A$ an $H$-comodule algebra.
The algebra extension $A^{\coinv\, H}\subset A$ is called
a \textit{cleft extension} if
there is a right $H$-comodule map $j : H \to A$, called \textit{cleaving map}, 
that is convolution invertible.

An 
extension $A^{\coinv\, H}\subset A$ is called a
\textit{trivial extension} if 
there is an $H$-comodule algebra map $j:H\to A$.  
\end{definition}

Since $1_H$ is a grouplike element $j(1_H )  j^{-1}(1_H ) = 1_A$,
so that  $j(1_H)$ is an invertible element in $A^{\coinv\, H}$. Hence a
cleaving map can always be normalised to $j(1_H)=1_A$. We will always consider normalized cleaving maps.

\begin{remark}\label{equivariantly-projective} 
Cleft extensions, if the base ring is a field $\kk$,  are furthermore faithfully flat (or equivariantly
projective) Hopf-Galois
extensions (see e.g. \cite[Part~VII \S 6]{BNCG}, \cite{brz2}).
\end{remark}

\begin{remark}\label{cleft-rem} 
A trivial extension $A^{\coinv\,
H}\subset A$ is automatically a cleft extension.
In fact, since an $H$-comodule algebra map $j:H\to A$  maps the unit of $H$ in
that of $A$, its convolution inverse is
$j^{-1}=j\circ S$. 
Furthermore, the  $H$-comodule algebra map $j:H\to A$ is an injection, indeed the map 
($\varepsilon\otimes id)\circ (m\otimes id)\circ (id\otimes j\!\circ\!
S\otimes id)\circ(\delta\otimes id)\circ \delta $ sends $j(h)$ to
$h$. Thus the subalgebra $j(H)\subset A$ is isomorphic to $H$.

The  extension $B\subset B\otimes H$ of Example \ref{hg-trivial} is an example
of trivial extension (with $j(h)=1_B\otimes h$, for all
$h\in H$).
\end{remark}

By a theorem of Doi and Takeuchi \cite{DT} 
(we also refer to \cite[Theorem 8.2.4]{mo}, \cite[Part~VII \S 5]{BNCG}) cleft extensions are special cases of
Hopf-Galois extensions. 
\begin{theorem}\label{thmDT}
Let $A$ be an $H$-comodule algebra (with base ring a field $\kk$), then $A^{\coinv\, H}\subset A$ is a cleft
extension if
and only if  $A^{\coinv\, H}\subset A$ is an Hopf-Galois extension and
there is an
$H$-comodule and left $B=A^{\coinv\, H}$-module isomorphism
$B\otimes H\simeq A$.
\end{theorem}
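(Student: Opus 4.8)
The statement to be proved is the Doi--Takeuchi characterization, Theorem~\ref{thmDT}: over a field $\kk$, an $H$-comodule algebra extension $B = A^{\coinv\, H}\subset A$ is cleft if and only if it is Hopf--Galois \emph{and} there is an $H$-comodule, left $B$-module isomorphism $B\otimes H\simeq A$. The plan is to prove the two implications separately, building the cleaving map out of the normal basis isomorphism in one direction, and building the normal basis isomorphism out of the cleaving map in the other.

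\medskip

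\emph{($\Leftarrow$) From Hopf--Galois plus normal basis to cleft.} Suppose $A$ is Hopf--Galois over $B$ and we are given a left $B$-linear, right $H$-colinear isomorphism $\phi\colon B\otimes H\to A$. The natural candidate for a cleaving map is $j\colon H\to A$, $j(h):=\phi(1_B\otimes h)$. By construction $j$ is right $H$-colinear, since $\phi$ is colinear and $1_B\otimes(-)\colon H\to B\otimes H$ intertwines the coproduct with the coaction on $B\otimes H$. The only nontrivial point is convolution invertibility of $j$. Here I would use the Hopf--Galois property: bijectivity of $\chi\colon A\otimes_B A\to A\otimes H$ lets us define the ``translation map'' $\tau\colon H\to A\otimes_B A$, $\tau(h):=\chi^{-1}(1_A\otimes h)=:h^{\langle1\rangle}\otimes_B h^{\langle2\rangle}$, which satisfies the standard identities ($h^{\langle1\rangle}h^{\langle2\rangle}=\epsi(h)1_A$; colinearity in each leg; etc.). Then one checks that $j^{-1}\colon H\to A$ defined via $\tau$ and $\phi^{-1}$ — concretely, composing $\tau$ with $\phi^{-1}$ on the first tensor factor and projecting to the $H$-component, i.e. $j^{-1}(h) = (\mathrm{id}_A\otimes\epsi\circ\mathrm{pr}_H\circ\phi^{-1})\big(S(h^{\langle1\rangle}{}_{(1)})\, h^{\langle2\rangle}\otimes h^{\langle1\rangle}{}_{(0)}\big)$ or a similar explicit formula — is a two-sided convolution inverse of $j$. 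I expect this verification to be the main computational obstacle: one must juggle the translation-map identities together with left $B$-linearity of $\phi$ and the fact that $\phi^{-1}(j(h))=1_B\otimes h$. A cleaner route may be to first note that colinearity of $j$ forces $j(h)_{(0)}\otimes j(h)_{(1)} = j(h_{(1)})\otimes h_{(2)}$, so $j$ is automatically a ``total integral'' once we show it is injective, and then invoke the version of Doi--Takeuchi that characterizes cleft extensions among Hopf--Galois ones by the existence of a colinear algebra-section-like map; but I would carry out the direct translation-map argument to keep the proof self-contained.

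\medskip

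\emph{($\Rightarrow$) From cleft to Hopf--Galois plus normal basis.} Conversely, let $j\colon H\to A$ be a normalized cleaving map with convolution inverse $j^{-1}$. Define $\phi\colon B\otimes H\to A$ by $\phi(b\otimes h):=b\,j(h)$ and $\psi\colon A\to B\otimes H$ by $\psi(a):=a_{(0)}\,j^{-1}(a_{(1)})\otimes a_{(2)}$. First I would check $\psi$ lands in $B\otimes H$: apply $\delta$ to the first leg of $\psi(a)$ and use colinearity of $j^{-1}$ (which follows from colinearity of $j$ together with the convolution-inverse relation) to see the first tensor factor is coinvariant. Then $\phi$ and $\psi$ are mutually inverse: $\phi\psi(a)=a_{(0)}j^{-1}(a_{(1)})j(a_{(2)})=a_{(0)}\epsi(a_{(1)})1_A=a$ by the convolution-inverse identity, and $\psi\phi(b\otimes h)=b\,j(h_{(1)})_{(0)}j^{-1}(j(h_{(1)})_{(1)})\otimes\ldots$ collapses to $b\otimes h$ using colinearity of $j$ and the counit axiom. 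This $\phi$ is manifestly left $B$-linear and right $H$-colinear (the latter again by colinearity of $j$). For the Hopf--Galois property, I would exhibit an explicit inverse to the canonical map: define $\chi^{-1}\colon A\otimes H\to A\otimes_B A$ by $\chi^{-1}(a\otimes h):=a\,j^{-1}(h_{(1)})\otimes_B j(h_{(2)})$, and verify $\chi\chi^{-1}=\mathrm{id}$ and $\chi^{-1}\chi=\mathrm{id}$ by direct computation, the key inputs being the two convolution-inverse identities and the colinearity of $j$ to move the coaction through. This direction is essentially bookkeeping once the formulas are written down; the subtle point worth stating carefully is why $j^{-1}$ is colinear, which is where the bijective-antipode hypothesis on $H$ is quietly used (or rather, the relation $j^{-1}(h)_{(0)}\otimes j^{-1}(h)_{(1)} = j^{-1}(h_{(2)})\otimes S(h_{(1)})$ is derived from colinearity of $j$ by applying $\mathrm{id}\ast j^{-1}$ appropriately).

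\medskip

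\emph{Summary of the hard part.} The genuine obstacle is the ($\Leftarrow$) direction: reconstructing the convolution inverse of $j$ from the Hopf--Galois structure via the translation map and checking it is two-sided. The ($\Rightarrow$) direction and all the colinearity verifications are routine Hopf-algebraic manipulations with Sweedler notation, using only the defining identities of a cleaving map and its convolution inverse, plus the counit and coassociativity axioms.
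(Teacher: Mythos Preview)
The paper does not prove Theorem~\ref{thmDT}; it is stated as a citation of Doi--Takeuchi \cite{DT} (with further references to \cite[Theorem 8.2.4]{mo} and \cite[Part~VII \S 5]{BNCG}), and only the relation $\theta(b\otimes h)=bj(h)$ between the cleaving map and the normal-basis isomorphism is recorded afterwards. There is therefore no in-paper argument to compare against; your sketch is a self-contained attempt at the standard proof.

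Your ($\Rightarrow$) direction is correct and complete: the maps $\phi(b\otimes h)=bj(h)$, $\psi(a)=a_{(0)}j^{-1}(a_{(1)})\otimes a_{(2)}$, and $\chi^{-1}(a\otimes h)=aj^{-1}(h_{(1)})\otimes_B j(h_{(2)})$ are exactly the standard ones, and the verifications you list go through.

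In the ($\Leftarrow$) direction your strategy---build $j(h)=\phi(1_B\otimes h)$ and use the translation map to produce $j^{-1}$---is right, but the explicit formula you write down does not typecheck (you multiply $S(h^{\langle 1\rangle}{}_{(1)})\in H$ by $h^{\langle 2\rangle}\in A$, and ``$\mathrm{pr}_H$'' on $B\otimes H$ is undefined since $B$ has no counit). The clean construction is: set $E:=(id_B\otimes\epsi)\circ\phi^{-1}:A\to B$, a left $B$-linear retraction with $E(j(h))=\epsi(h)$ and $a=E(a_{(0)})\,j(a_{(1)})$; then define $j^{-1}(h):=h^{\langle 1\rangle}E(h^{\langle 2\rangle})$. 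The identity $j^{-1}\ast j=\epsi$ follows from the translation-map relation $h^{\langle 1\rangle}\otimes_B h^{\langle 2\rangle}{}_{(0)}\otimes h^{\langle 2\rangle}{}_{(1)}=h_{(1)}{}^{\langle 1\rangle}\otimes_B h_{(1)}{}^{\langle 2\rangle}\otimes h_{(2)}$, while $j\ast j^{-1}=\epsi$ follows from $j(h_{(1)})\,h_{(2)}{}^{\langle 1\rangle}\otimes_B h_{(2)}{}^{\langle 2\rangle}=1\otimes_B j(h)$ (seen by applying $\chi$ and using colinearity of $j$). With this correction your plan is the standard argument found in the references the paper cites.
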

Here  $B\otimes H$ is an $H$-comodule
with $H$-coaction $id\otimes \Delta$. For later use we recall that the  relation betweeen a cleaving map $j: H\to A$ and the 
left $B=A^{\coinv\, H}$-module and $H$-comodule isomorphism $\theta:
B\otimes H\to A$ is given by $\theta(b\otimes
h)=bj(h)$.
\\

The notion of cleft extension is the noncommutative generalization of
that of trivial principal bundle. 
The next observation sharpens the relation between
trivial Hopf-Galois
extensions, trivial principal bundles and cleft extensions.

\begin{observation} \label{obsjtheta}
If $j:H\to A$ is an $H$-comodule {\sl algebra} map, then 
we have an action of $H$ on $B=A^{\coinv\, H}$ given by 
$h\triangleright b=j(h_1){{\,}}b{{\,}}j^{-1}(h_2)=j(h_1){{\,}}b{{\,}}j(S(h_2))$, for all $h\in H,
b\in B$. We can therefore consider the smashed product
algebra $B\,\sharp\, H$, that is the $H$-comodule $B\otimes H$ with product structure
$(b\otimes h)(b'\otimes h')=b_{\:\!}(h_1\triangleright b')\otimes h_2h'$.
With this product $\theta: B\,\sharp\, H\to A$ is an $H$-comodule algebra
isomorphism. 
If $B$ is central the smashed product is the usual tensor
product of algebras. In particular, in
the affine case, we immediately recover that a
$P$-principal bundle $E\to E/P$ is trivial if and only if
${\cO}(E/P)\otimes \cO(P)\simeq \cO(E)$ as $\cO(P)$-comodule algebras.

In the more general case of an extension that is nontrivial but cleft,
the map $j:H\to A$ is not an $H$-comodule
algebra map, and the 2-cocycle $$\tau: H\otimes H\to
B~, ~~\tau(h,k)=j(h_{(1)}) j(k_{(1)}) j^{-1}(h_{(2)}k_{(2)})$$ measures
this failure.  In general the map $h\otimes b\mapsto j(h_1){{\,}}b{{\,}}j^{-1}(h_2)$
is not an action of $H$ on $B$.
In this cleft case we can still induce via the isomorphism 
$\theta: B\otimes H\to A$ an algebra structure on $B\otimes H$,
this corresponds to a crossed product $B\,\sharp_\tau H$
(see e.g. \cite[Proposition 7.2.3]{mo}). 
\end{observation}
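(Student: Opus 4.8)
The plan is to promote $B=A^{\coinv H}$ to a left $H$-module algebra, deduce that the smash product $B\,\sharp\,H$ is a genuine associative unital algebra, verify directly that the Doi--Takeuchi map $\theta$ is multiplicative for this product, and then obtain the two special cases by specialisation, the last assertion being a citation.

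First I would record the elementary fact that, since $j\colon H\to A$ is a convolution-invertible right $H$-comodule map (whether or not it is an algebra map), its convolution inverse obeys $\delta(j^{-1}(h))=j^{-1}(h_2)\otimes S(h_1)$: indeed $\delta$ is an algebra map, so $\delta\circ j^{-1}$ is the convolution inverse of $\delta\circ j$ in $\Hom(H,A\otimes H)$, and $h\mapsto j^{-1}(h_2)\otimes S(h_1)$ is readily checked to do the job. Using this, together with $\delta(b)=b\otimes 1$ for $b\in B$ and the comodule-algebra properties of $j$, a short Sweedler computation gives $\delta(h\triangleright b)=j(h_1)\,b\,j^{-1}(h_4)\otimes h_2S(h_3)=(h\triangleright b)\otimes 1$, so $h\triangleright b\in B$. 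Normalisation $j(1_H)=1_A$ gives $1_H\triangleright b=b$; multiplicativity of $j$ and anti-multiplicativity of $S$ give $j^{-1}(k_2)\,j^{-1}(h_2)=j(S(h_2k_2))=j^{-1}(h_2k_2)$, hence $h\triangleright(k\triangleright b)=(hk)\triangleright b$; and the convolution identity $j^{-1}\ast j=\epsi(\cdot)1_A$ yields both $h\triangleright 1_A=\epsi(h)1_A$ and $h\triangleright(bb')=(h_1\triangleright b)(h_2\triangleright b')$. Thus $B$ is a left $H$-module algebra, $B\,\sharp\,H$ is an associative unital algebra by the standard smash-product construction, and it is an $H$-comodule algebra for the coaction $\mathrm{id}\otimes\Delta$.

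Next I would invoke Theorem \ref{thmDT}: the map $\theta\colon B\otimes H\to A$, $\theta(b\otimes h)=b\,j(h)$, is already a bijective left $B$-module and right $H$-comodule map, so only multiplicativity for the smash product remains. This reduces to the one-line identity $(h_1\triangleright b')\,j(h_2)=j(h)\,b'$ (again from $j^{-1}\ast j=\epsi(\cdot)1_A$), which gives
\[\theta((b\otimes h)(b'\otimes h'))=b\,(h_1\triangleright b')\,j(h_2)\,j(h')=b\,j(h)\,b'\,j(h')=\theta(b\otimes h)\,\theta(b'\otimes h').\]
Hence $\theta\colon B\,\sharp\,H\to A$ is an $H$-comodule algebra isomorphism. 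If $B$ is central in $A$ then $h\triangleright b=b\,j(h_1)\,j^{-1}(h_2)=\epsi(h)\,b$, the action is trivial, and $B\,\sharp\,H=B\otimes H$ as algebras. Applying this with $B=\cO(E/P)$ (automatically central, as everything is commutative), $H=\cO(P)$ and $A=\cO(E)$: if the bundle is trivial there is by definition an $\cO(P)$-comodule algebra map $j$, and $\theta$ exhibits $\cO(E/P)\otimes\cO(P)\cong\cO(E)$; conversely, precomposing such an isomorphism with $1\otimes\mathrm{id}\colon\cO(P)\to\cO(E/P)\otimes\cO(P)$ yields an $\cO(P)$-comodule algebra map $\cO(P)\to\cO(E)$, i.e.\ a trivialisation.

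Finally, for the general cleft but non-trivial case I would point out that $j$ is no longer an algebra map, so $h\otimes b\mapsto j(h_1)\,b\,j^{-1}(h_2)$ fails the action axioms, the obstruction being precisely $\tau(h,k)=j(h_1)\,j(k_1)\,j^{-1}(h_2k_2)$; this $\tau$ still lands in $B$ by the same coinvariance argument and is a normalised Sweedler $2$-cocycle, and transporting the product of $A$ to $B\otimes H$ along $\theta$ produces exactly the crossed product $B\,\sharp_\tau\,H$. For this identification I would simply cite the classical crossed-product theory, e.g.\ \cite[Proposition~7.2.3]{mo} and \cite{DT}. The only point that needs any care is the Sweedler bookkeeping for $\delta\circ j^{-1}$ (recall $j$ is merely a comodule map, not a comodule algebra map) and, in the last part, resisting the use of $j(h)j(k)=j(hk)$; there is no substantial difficulty here, which is why the statement is recorded as an observation.
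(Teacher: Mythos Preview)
Your proposal is correct. The paper itself offers no proof of this statement: it is recorded as an observation, with the crossed-product identification in the cleft case deferred entirely to the citation \cite[Proposition~7.2.3]{mo}, and the trivial-extension case treated as folklore following Theorem~\ref{thmDT}. You have supplied a full verification where the paper supplies none, so there is no alternative argument to compare against.

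Your Sweedler bookkeeping is accurate throughout: the formula $\delta(j^{-1}(h))=j^{-1}(h_2)\otimes S(h_1)$ is obtained correctly by convolution-inverting $\delta\circ j$; the module-algebra axioms for $\triangleright$ are checked cleanly using $j^{-1}=j\circ S$ and anti-multiplicativity of $S$; and the key identity $(h_1\triangleright b')\,j(h_2)=j(h)\,b'$ is exactly what is needed for multiplicativity of $\theta$. The specialisation to central $B$ and the affine principal-bundle statement are immediate, and your handling of the general cleft case---noting that $\tau$ lands in $B$ by the same coinvariance computation and then citing \cite{mo,DT}---matches the paper's own level of detail.
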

\medskip

\medskip

\medskip

We want to present a notion of quantum principal bundle that is more
general than that of Hopf-Galois extension presented in Def. 
\ref{hg-def}, and which can accomodate also the case where $M$ is an
algebraic variety, which is not affine.
To this end, we  consider a {\sl sheaf theoretic} description
of quantum principal bundles. We start by introducing
the notion of {\sl quantum ringed space}.

\begin{definition}\label{qringed-def}
A \textit{quantum ringed space}  
$(M , \cO_M)$ is
a pair consisting of a classical topological space $M$ and 
a sheaf over $M$ of noncommutative algebras.
\end{definition}

Classical differentiable manifolds 
or algebraic varieties, together with the sheaves of functions on them
(differentiable or algebraic) are examples of quantum ringed spaces.
Also supergeometry provides important examples (see \cite{ccf} Ch. 3).
We now define  the key notion of {\sl quantum principal bundle}
by extending to the quantum case 
what we established in Proposition {\ref{pflaum-prop}}.

\begin{definition}\label{hg-shf} 
Let $(M,\cO_M)$ be a ringed space and $H$ a Hopf algebra. 
We say that a sheaf of $H$-comodule algebras $\cF$ 
is an $H$-\textit{principal bundle} 
or \textit{quantum principal bundle} over $(M,\cO_M)$
if there exists an open covering 
$\{U_i\}$ of $M$ such that:
\begin{enumerate}
\item $\cF(U_i)^{\coinv H}=\cO_M(U_i)$,
\item $\cF$ is \textit{locally cleft}, 
that is 
$\cF (U_i )$ is a cleft extension of $\cF(U_i)^{\coinv H}$.
\end{enumerate}
The locally cleft property is equivalent to
the existence of a \textit{projective cleaving map}
that is a collection of cleaving maps $j_i:H \lra
\cF(U_i)$.
\end{definition}

\begin{remark} 
A sheaf of Hopf-Galois extentions is locally cleft if it so as a sheaf
of $H$-comodule algebras. A locally cleft sheaf $\cF$ of Hopf-Galois extensions is in
particular a quantum principal bundle on the quantum ringed space $(M,\cF^{\coinv H})$.

Moreover, a sheaf $\cF$ of $H$-comodule algebras, such that
the extension $\cO_M(M)=\cF(M)^{\coinv H}\subset \cF(M)$ is Hopf-Galois, is
equivalent to a sheaf of Hopf-Galois extensions, indeed, as observed
in \cite{cipa}, the property of being Hopf-Galois restricts locally. 
Therefore, a quantum principal bundle $\cF$ has the property 
$\cO_M(M)=\cF(M)^{\coinv H}\subset \cF(M)$ is Hopf-Galois, if and only if it is a
locally cleft sheaf of Hopf-Galois extensions.
\end{remark}
Let us see a simple example, in the commutative setting, that we will
generalize to the noncommutative setting and generic dimensions.

\begin{example} \label{sl2-ex}
Let $E=\rSL_2(\C)$ and consider the principal bundle 
$\wp: \rSL_2(\C) \lra \rSL_2(\C)/P \simeq \bP^1(\C)$,
where $P$ is the upper Borel in $\rSL_2(\C)$, i.e.,  the subgroup  of all
 matrices with vanishing entry (1,2).
Let $A=\cO(\rSL_2)$ be the algebra of regular functions on the complex
special linear group $\rSL_2(\C)$. We explicitly have
$$
\cO(\rSL_2)=\C[a,b,c,d]/(ad-bc-1)~,
$$
where $\C[a,b,c,d]$ denotes the commutative algebra over $\C$ freely generated
by the symbols $a,b,c,d$, while $(ad-bc-1)$ denotes the ideal generated by the
element $ad-bc-1$, that implements the determinant relation.

Let  $\cO(P)$ be the algebra of functions on $P\subset \rSL_2(\C)$,
this is the quotient $\cO(\rSL_2)/(c)
=\C[t,p,t^{-1}]:=\C[t,p,s]/(ts-1)$.
With the comultiplication $\Delta$ in $\cO(\rSL_2)$ 
and the projection
$$\pi:\cO(\rSL_2)\lra \cO(\rSL_2)/(c)~~$$ that on the generators reads
$\left({}_c^a~{}_d^b\right)\mapsto \left({}_0^t~{}_{t^{-1}}^p\right)$
(and is extended as an algebra map)
 we can define the coaction 
\begin{equation}\label{deltaSL2}
\de=(id \otimes \pi)\Delta : \cO(\rSL_2)\rightarrow \cO(\rSL_2)\otimes
{\cO}(P)~.
\end{equation}

The coinvariants $B=A^{\coinv\, {\cO(P)}}$ of this coaction are just the constants,
indeed the coinvariant are functions on the base space  $\bP^1(\C)$,
and the only regular functions on all projective space are the
constants (Liouville theorem).
We see that the extension $A^{\coinv\, {\cO(P)}}\subset A$ is not
    Hopf-Galois, and that this is due to the lack of regular functions
    on the base space of the $P$-principal bundle
    $\wp : \rSL_2(\C)\to \rSL_2(\C)/P\simeq \bP^1(\C)$. 

Nevertheless, we can define an $\cO(P)$-principal bundle structure
according to Definition \ref{hg-shf}. 
To this aim, we first consider an affine open cover of the total space 
and then we project it to the base.

Let $\{V_1,V_2\}$ be the open cover of $\rSL_2 (\C)$ where
$V_i$ consists of those matrices in $\rSL_2 (\C)$ with entry $(i,1)$ not equal to zero. Define $U_i=\wp(V_i)$
and observe that
$\{U_1,U_2\}$ is an open cover of $\bP^1(\C)$ since $\wp$ is an open
map.
The algebras of functions on the opens $V_1$ and $V_2$ are the
localizations 
$$A_1:=\cO(\rSL_2)[a^{-1}]=A[a^{-1}]~,~~~A_2:=\cO(\rSL_2)[c^{-1}]=A[c^{-1}]~.$$ 
The coaction in (\ref{deltaSL2}) uniquely extends to coactions $\de_i:A_i \lra A_i \otimes  {\cO}(P)$ on
these localizations (namely $\delta a^{-1}=a^{-1}\otimes t^{-1}$, 
$\delta c^{-1}=c^{-1}\otimes t^{-1}$).
The coinvariant subalgebras  $B_i=A_i^{\coinv \, {\cO}(P)}$ 
explicitly read
$$
B_1=\C[a^{-1}c] \simeq \C[z], \qquad B_2=\C[ac^{-1}] \simeq \C[w]~.
$$
Notice that they are the coordinate rings
of the affine algebraic
varieties $U_i\simeq \C$ open in $\rSL_2(\C)/P\simeq \bP^1(\C)$.

Next we consider on  $\bP^1(\C)$ the topology
$\{\emptyset, U_{12}=U_1\cap U_2, U_1, U_2, \bP^1(\C)\}$ (this is a rough
topology, but sufficient to describe the principal bundle on
$\bP^1(\C)$). We then define the ringed space $(\bP^1(\C),
\cO_{\bP^1(\C)})$ with sheaf of regular functions $\cO_{\bP^1(\C)}$ given by
$$\cO_{\bP^1(\C)}(U_i):=B_i~,~~\cO_{\bP^1(\C)}(U_{12}):=B_{12}:=B_1[z^{-1}]~,~~\cO_{\bP^1(\C)}(\bP^1(\C)):=\C$$
and with $\cO_{\bP^1(\C)}(\emptyset)$ being the one element algebra
over $\C$, terminal object in the category of algebras.
It is easy to verify that the restriction morphism 
$r_{{12},2}:B_2\to B_{12}$, $w\mapsto z^{-1}$,
with all other ones being given by the obvious inclusions (but for the
empty set where we have the canonical projections),
indeed define the sheaf of regular functions on $\bP^1(\C)$.

Finally we define the sheaf $\cF$ of $\cO(P)$-comodule algebras
$$
\cF(U_i):=A_i~, \quad \cF(U_{12}):=A_{12}:=A_1[c^{-1}]=A_2[a^{-1}]~, 
\quad \cF(\bP^1(\C))=\cO(\rSL_2),
$$
and $\cF(\emptyset):=\{0\}$ (the  one element algebra) with the
obvious restriction morphisms.

We now show that all properties required by Def. \ref{hg-shf}
are satisfied. Indeed by construction $\cO(U_i)=B_i=A_i^{\coinv\,
  \cO(P)}=\cF(U_i)^{\coinv \cO(P)}$. Furthermore the $\cO(P)$-comodule
  $\cF(U_1)$ is a trivial extension (and hence a cleft extension)
  because the map $j_1 :\cO(P)\to  A_1$ defined on the generators by 
$$
t^{\pm 1}\mapsto a^{\pm 1}~,~~ p\mapsto b~,
$$
and extended as algebra morphism to all $\cO(P)$ is well defined and
easily seen to be an $\cO(P)$-comodule morphism (recall
$\delta a^{\pm1}=a^{\pm 1}\otimes t^{\pm 1}$ and $\delta b=b\otimes
t^{-1}+a\otimes p$). Similarly, $\cF(U_2)$ is a trivial extension with
$j_2:\cO(P)\to  A_2$ given by  $t^{\pm 1}\to c^{\pm 1} $, $ p\mapsto d$.
\end{example}

\begin{example}\label{sl2q}

We discuss the quantum 
deformation of the previous example. Consider the algebra 
$A_q$ that is the algebra  $\C_q\langle a, b, c, d\rangle$ freely
generated (over $\C_q=\C[q,q^{-1}]$, $q$ an indeterminate that may be
specialized to a complex number) 
by the symbols $a,b,c,d$, modulo the ideal $I_M$
generated by the
$q$-commutation relations (or Manin relations,
cf. Def. \ref{qmatrices}),
$$ 
\begin{array}{c}
ab=q^{-1}ba, \quad ac=q^{-1}ca, \quad bd=q^{-1}db, \quad cd=q^{-1}dc, \\ \\
bc=cb \qquad ad-da=(q^{-1}-q)bc 
\end{array}
$$
and modulo
the ideal $ (ad-q^{-1}bc-1) $ generated by the determinant
relation. In short: 
$$
A_q:=\cO_q(\rSL_2)\,=\,
\C_q\langle a, b, c, d \rangle /I_M + (ad-q^{-1}bc-1)~.
$$
Let us similarly define
$$
\cO_q(P):= 
\C_q \langle t, t^{-1}, p \rangle/(tp-q^{-1}pt):=\C_q \langle t, s, p
\rangle/  (ts-1, st-1, tp-q^{-1}pt) ~.
$$

Let $U_i$ be a cover of $M=\rSL_2(\C)/P$
as in Example \ref{sl2-ex}.
In analogy with the classical case we define $A_{q\,1}:=A_q[a^{-1}]$, $A_{q\,2}:=A_q[c^{-1}]$, the
noncommutative localizations in the elements $a$ and $c$ respectively. The coinvariants are given by
 $$
B_{q\,1}=\C_q[a^{-1}c] \simeq \C_q[u], \qquad B_{q\,2}=\C_q[ c^{-1}a] \simeq \C_q[v]~.
$$
and the ringed space $(\bP^1(\C),
\cO_{q\,\bP^1(\C)})$ can be then easily constructed in analogy with the commutative case:
$$
\cO_{q\,\bP^1(\C)}(U_i):=B_{q\,i}~,~~\cO_{q\,\bP^1(\C)}(U_{12}):=B_{q,12}
:=B_{q,1}[u^{-1}]~,~~\cO_{q\,\bP^1(\C)}(\bP^1(\C)):=\C
$$
with the nontrivial restriction map given by $r_{q\,{12},2}:B_{q\,2}\to B_{q\,12}$, $v\mapsto u^{-1}$ that is again well defined since on $U_{12}$ one has $uv=1=vu$.

 The natural candidate 
$$
\cF(U_i):=A_{q\,i}~, \quad \cF(U_{12}):=A_{q\,12}:=A_{q\,1}[c^{-1}]=A_{q\,2}[a^{-1}]~, \quad \cF(\bP^1(\C))=A_q~,
$$
 is again a sheaf of $\cO_q(P)$-comodule algebras on $\bP^1(\C)$; note in particular that $A_{q\,12}$ is well defined since the localization we choose satisfies the Ore condition (see \cite{sk1}). 
As in the previous section we define the  cleaving maps 
$j_i:\cO_q(P) \lra A_{q,i}$, $i=1,2$ on the generators as:
$$
\begin{array}{rl}
j_1:& t^{\pm 1}\mapsto a^{\pm 1},\qquad p\mapsto b~, \\ \\
j_2:& t^{\pm 1}\to c^{\pm 1}, \qquad p\mapsto d.
\end{array}
$$
We observe that $j_1$ extends to an algebra map to all $A_{q,1}$:
$$
 j_1(tp-q^{-1}pt)=j_1(t)j_1(p)-q^{-1}j_1(p)j_1(t)=ab-q^{-1}ba
$$
and similarly for $j_2$.
The comodule property of $j_1$ (and similarly for $j_2$) is
then easily checked on the generators:
 $$
 \delta \circ j_1(t)=a\otimes t=(j_1\otimes id) \circ\Delta( t) 
 $$
 and
 $$
 \delta \circ j_1(p)=b\otimes t^{-1}+a\otimes p=(j_1\otimes id) \circ\Delta( p)~.
 $$
We can then conclude that $A_{q\,i}$ are trivial $\cO_q(P)$-extensions of $B_{q\,i}$.
\end{example}

We will study a generalization of the above example
in Section \ref{ex-sec}. In that more general setting we will
use the following proposition (see e.g. \cite[\S 1.1]{eh}),
\begin{proposition} \label{sheaf-prop}
~1.~Let $\cB$ be a basis
for a topology $\cT$ on $M$. Then a $\cB$-sheaf
of $H$-comodule algebras $\cF$ (that is a sheaf defined for the open sets in $\cB$ with 
gluing conditions) extends to a unique sheaf of $H$-comodules on $M$.\\[-.5em]

2.~If $\{U_i\}$ is an open cover of $M$, then 
the empty set and  finite intersections 
$U_{i_1} \cap \dots \cap U_{i_r}$ form a basis for a topology on $M$.
\end{proposition}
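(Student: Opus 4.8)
\emph{Proof proposal.} I would dispatch part~2 first, as it is purely point-set. Put $\cB:=\{\emptyset\}\cup\{U_{i_1}\cap\cdots\cap U_{i_r}\ :\ r\ge 1\}$. To check that $\cB$ is a basis for a topology on $M$ it suffices to verify (i) $\bigcup_{B\in\cB}B=M$ and (ii) for all $B_1,B_2\in\cB$ and $x\in B_1\cap B_2$ there is $B_3\in\cB$ with $x\in B_3\subseteq B_1\cap B_2$. Condition (i) holds because $\{U_i\}$ is a cover and each $U_i$ is a one-fold intersection, hence lies in $\cB$; condition (ii) is immediate since $B_1\cap B_2$ is again a finite intersection of the $U_i$, hence itself belongs to $\cB$, so we may take $B_3=B_1\cap B_2$. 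The presence of $\emptyset$ causes no trouble.

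For part~1, the plan is the standard extension-by-limits, upgraded to keep track of the $H$-comodule algebra structure. For an arbitrary open $U\subseteq M$ define
\[
\cF(U):=\varprojlim_{B\in\cB,\ B\subseteq U}\cF(B),
\]
realized concretely as the algebra of compatible families $(s_B)_{B\subseteq U}$, $s_B\in\cF(B)$, with $r_{BB'}(s_B)=s_{B'}$ whenever $B'\subseteq B$; for open $V\subseteq U$ the restriction $r_{UV}$ is the obvious projection of families. When $U\in\cB$ the index poset has $U$ as maximum, so $\cF(U)$ agrees canonically with the original value, i.e. this genuinely extends the $\cB$-sheaf. Componentwise multiplication and unit make each $\cF(U)$ an algebra and each $r_{UV}$ an algebra map. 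For the coaction, the hypothesis that every $r_{BB'}$ is an $H$-comodule map says exactly that the maps $\delta_B$ form a morphism of the diagram, hence induce $\delta_U:\cF(U)\to\varprojlim(\cF(B)\otimes H)$; one then identifies the target with $\cF(U)\otimes H$. For a cover of $U$ by \emph{finitely many} basic opens the relevant limit is a finite one (an equalizer of maps between finite products), and $-\otimes_\kk H$, being exact, commutes with it; in general one invokes completeness of the category of $H$-comodule algebras (the category of $H$-comodules over a field is a Grothendieck category), equivalently the fact that every section lies in a finite-dimensional subcomodule, so that $\delta_U$ indeed lands in $\cF(U)\otimes H$. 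Coassociativity and counitality of $\delta_U$, and the fact that each $r_{UV}$ intertwines coactions, are then checked componentwise on a generating family of compatible sections, exactly as in the classical sheaf-of-rings case.

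It remains to verify the sheaf axioms for $\cF$ with respect to \emph{arbitrary} open covers $U=\bigcup_\alpha V_\alpha$, and uniqueness. Separation and gluing reduce, via the facts that every open is a union of basic opens and every intersection of two basic opens is a union of basic opens, to the corresponding properties of the $\cB$-sheaf; this is verbatim the argument in \cite[\S1.1]{eh}, and it respects the algebra and comodule structures because all maps in sight (restrictions and the gluing isomorphism) are $H$-comodule algebra maps by construction. Uniqueness is formal: if $\cG$ is another sheaf of $H$-comodule algebras restricting to the given $\cB$-sheaf, applying the sheaf axiom for $\cG$ to the cover of $U$ by the basic opens contained in $U$ gives $\cG(U)\cong\varprojlim_{B\subseteq U}\cG(B)=\varprojlim_{B\subseteq U}\cF(B)=\cF(U)$ as $H$-comodule algebras, naturally in $U$. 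The only genuinely nonformal point — and the one I would be most careful about — is the interchange of $-\otimes H$ with the limit defining $\cF(U)$; this is where the restriction to (locally) finite covers, or the finite-dimensionality of subcomodules, is essential, and it is exactly why part~1 is later used through part~2 with covers $\{U_i\}$ that are finite in the examples of interest.
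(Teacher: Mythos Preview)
The paper does not supply its own proof of this proposition; it is stated as a standard fact with a reference to Eisenbud--Harris \cite[\S 1.1]{eh}. Your argument is a correct and careful expansion of the classical extension-by-limits construction, with the added attention to the $H$-comodule-algebra structure that the cited reference (which treats ordinary sheaves of rings) does not address explicitly. In particular, your identification of the tensor--limit interchange $(\varprojlim \cF(B))\otimes H \simeq \varprojlim(\cF(B)\otimes H)$ as the one genuinely delicate point is apt and goes beyond what the paper says: in the paper's applications the bases arising via part~2 are finite (e.g.\ the $n$ standard affine opens covering $\bP^{n-1}(\C)$), so only finite limits are needed and the interchange is automatic. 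One small remark: the statement as written in the paper concludes with ``sheaf of $H$-comodules'' rather than ``$H$-comodule algebras''; this is almost certainly a slip, since the proposition is invoked (e.g.\ in Proposition~\ref{shf-def-prop}) to produce sheaves of $\cO_q(P)$-comodule \emph{algebras}, and your proof correctly establishes this stronger conclusion.
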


\begin{remark}In Example \ref{sl2q}, with $\{U_i\}$ open cover of 
$\bP^1(\C)$, the
$\cB$-sheaf is the restriction of $\cF$ to $\cB=\{\emptyset, U_{12}, 
U_1, U_2\}$, and $\cF(\bP^1(\C))$ is recovered as the pull-back $\cF(\bP^1(\C))=\{(f,g)\in \cF(U_1)\times
\cF(U_2)\,;\, r_{q\;12,1}(f)=r_{q\;12,2}(g)\}$  of
$\cO_q(P)$-comodule algebras (here $r_{q\;12,i}:A_{q\: i}\to
A_{q\:12}$ are the obvious restriction maps).
\end{remark}

\section{Quantum homogenous projective varieties}
\label{qproj-sec}
A homogenous projective variety can be realized as quotient of affine
algebraic groups $G$, $P$. Its homogenous coordinate ring
$\tilde{\cO}(G/P)$ with respect to a chosen projective embedding, 
when corresponding to a very ample line bundle ${\cal L}$, is
obtained via a section of ${\cal L}$; this is a a given element
$t\in\cO(G)$.
A quantum homogenous projective variety  $\tilde{\cO}_q(G/P)$  can be
similarly characterized via a quantum section $d\in\cO_q(G)$.
We review this construction due to
\cite{cfg}, see also \cite{fg1}, adapting, for the reader's convenience,
the main definitions and results to the present setting that differs from the
first reference setting (there the accent was on Poisson geometry and Quantum
Duality principle).

\subsection{Projective embeddings of homogeneous spaces}

If $G$ is a semisimple algebraic group, $P$ a parabolic subgroup,
the quotient $G/P$ is a projective variety
and the projection
$G \lra G/P$ is a principal bundle (see Definition \ref{p-princ}). 
$G/P$ is an homogeneous space for the $G$-action and just
an homogeneous variety for the $P$-action, which is not
transitive. 

 We now recall how a character of $P$ determines a projective
 embedding of $G/P$ and its coordinate ring $\Ogh$.
Given a representation  $\rho$ of $P$ on some vector space $V$,  we can
construct a vector bundle associated to it, namely
$$  
\cV := G \times_P V = G \times V/ \simeq \, , \quad
(gp,v) \simeq (g, \rho(p)^{-1}v) \, , \qquad \forall p \in P,
 g \in G, v \in V .  
$$
The space of global sections of this bundle is identified with the induced
module (see, e.g., \cite{ha} for more details)
$$  
H^0 \big( G\big/P, \cV \big)  \, = \,  
\big\{ f \colon G \rightarrow V \,\big|\, f \text{\ is regular},
\, f(gp) = \rho(p)^{-1}f(g) \,\big\} \; .  
$$
In particular, for $ \, \chi \colon \, P \lra \kk^* \, $  a character of  $ P $,
i.e.~a one dimensional representation of  $ P $  on  $ \, L \simeq \kk
\, $, we can consider 
 $ \, \cL^n := G \times_P L^{\otimes n} \, $ and define
  $$  \displaylines{
   \Ogh_n  \; := \; H^0 \big( G\big/P, \cL^{n} \big)  \cr\cr
   \Ogh  \; := \;  {\textstyle \bigoplus_{n \geq 0}}
\, \Ogh_n \; \subset \;  \cO(G)  \quad . }  $$
Assume $ \cL $  is very ample, i.e. it is generated 
by a set of global sections  $ f_0 $,  $ f_1 $,  $ \dots $,  
$ f_N \in \Ogh_1 \:\!$; so that the algebra  $ \Ogh $  
{\sl is graded and generated in degree 1\/}
(by the  $ f_i $'s).  Then  $\Ogh $  is the homogeneous coordinate ring of the
projective variety  $ G\big/P$  with respect to the embedding  given via
the global sections of  $\cL $  (see \cite{gh}, p.~176).

\begin{observation}While $\cO_{G/P}$ denotes the structure sheaf of $G/P$,
    so that $\cO_{G/P}(G/P)$ is the space of global sections, that is
      $\kk$ since $G/P$ is a projective variety, $\Ogh$ denotes
      the homogeneous coordinate ring of $G/P$.
\end{observation}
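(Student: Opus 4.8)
The statement is primarily a fixing of notation, so the plan is to justify the one piece of genuine content, the equality $\cO_{G/P}(G/P)=\kk$; the comparison with $\Ogh$ then follows directly from the definitions recalled just above.

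First I would record that $G/P$ is an irreducible complete (in fact projective) variety over $\kk$: irreducibility holds because $G/P$ is the image of the connected group $G$, and projectivity is exactly what the construction above provides, the global sections $f_0,\dots,f_N$ of the very ample $\cL$ realising $G/P$ as a closed subvariety of $\bP^N$. Then, given $f\in\cO_{G/P}(G/P)$, I would regard it as a morphism $G/P\to\mathbb{A}^1_\kk$. Since $G/P$ is proper over $\kk$, the scheme-theoretic image of $f$ is a closed subscheme of $\mathbb{A}^1_\kk$ which is itself proper over $\kk$ and irreducible; because $\mathbb{A}^1_\kk$ is not proper, this image must be $0$-dimensional, hence (using that $G/P$ is reduced) a single reduced closed point, i.e.\ the spectrum of a finite field extension of $\kk$. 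The identity coset $eP$ is a $\kk$-rational point of $G/P$, so that extension is trivial and $f$ is the constant $f(eP)\in\kk$; hence $\cO_{G/P}(G/P)=\kk$. Alternatively, one may base change to $\bar\kk$, use the classical fact that a connected complete variety over an algebraically closed field has only constant global regular functions (cf.\ \cite{ha}), and descend along the $\kk$-point $eP$.

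Finally, since $\cL$ is very ample, $\Ogh=\bigoplus_{n\ge 0}H^0(G/P,\cL^n)\subset\cO(G)$ is graded, generated in degree $1$, and — by the classical fact recalled just above (\cite{gh}, p.~176) — is the homogeneous coordinate ring of $G/P$ for the embedding determined by $\cL$; this makes explicit that $\cO_{G/P}$ and $\Ogh$ encode different data (the structure sheaf only sees the constants globally), so the notational distinction in the statement is warranted. I do not expect any real obstacle here: the only point demanding a little care is ensuring that the ring of global functions comes out exactly as $\kk$ rather than as a proper finite extension, which is precisely why the $\kk$-rational point $eP$ enters the argument.
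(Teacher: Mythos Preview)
Your proposal is correct. The paper does not actually prove this observation: it is stated as a terminological remark, with the equality $\cO_{G/P}(G/P)=\kk$ simply asserted as a standard consequence of $G/P$ being projective, without argument. Your write-up supplies what the paper leaves implicit, and is in fact more careful than necessary in this context --- the properness-of-image argument and the use of the $\kk$-rational point $eP$ to rule out a nontrivial finite extension are the right ingredients when $\kk$ is not algebraically closed, though the paper's audience is presumably expected to take this for granted (or to invoke \cite{ha} directly). Nothing is missing or wrong; you have simply written out a proof where the paper offers none.
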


\smallskip

   We want to reformulate this classical construction in purely Hopf algebraic
terms. 
The character $\chi$  is a group-like element in the coalgebra
$ \cO(P) \, $.  The same holds for all powers  $ \chi^n $  ($ n \! \in \! \N
\, $). 
As the  $ \chi^n $'s  are
group-like, if they are pairwise different they also are linearly independent,
which ensures that the sum  $ \, \sum\limits_{n \in \N} \Ogh_n \, $, inside
$ \cO(G) $, is a direct one.  Moreover, once the embedding is given, each
summand  $ \Ogh_n $  can be described in purely Hopf algebraic terms as
\beq
\label{scoinv}
\begin{array}{rl}
   \Ogh_n  \, &\!\!:= \,  \big\{ f \in \cO(G) \,\big|\, f(gp) =
\chi^n\big(p^{-1}\big) f(g) \,\big\} \,   \\ \\
      &= \, \Big\{\, f \in \cO(G) \,\Big|\, \big((\text{\it id}
\otimes \pi) \circ \Delta \big)(f) = f \otimes S\big(\chi^n\big) \Big\}
\end{array} 
\eeq
 with  $ \, \pi \, \colon \cO(G) \relbar\joinrel\rightarrow \cO(P)
\, $  the standard projection,  $ S $  the antipode of  $ \cO(P) \, $.
Lifting $S(\chi)\in \cO(P)$ to an element $t\in  \cO(G)$ we have
the following proposition.

\begin{proposition} \label{t}
Let $P$ be a parabolic subgroup of a semisimple
algebraic group $G$ and denote by $\pi: \cO(G) \lra \cO(P)$ the natural projection
dual to the inclusion $P \subset G$. If $G/P$ is embedded into some
projective space via some very ample line bundle $\cal L$ then there
exists an element $t \in \cO(G)$ such that
\beq  \label{eq2.2}
\Delta_\pi(t) \, := \, \big((\text{\it id} \otimes \pi)
\circ \Delta \big)(t) \, = \, t \otimes \pi(t)     
\eeq
\beq \label{eq2.3}
\pi\big(t^m\big) \not= \pi\big(t^n\big) \quad \forall \;\; m \not= n 
\in \N   
\eeq
\beq \label{eq2.4}
\Ogh_n  \; = \;  \Big\{\, f \in \cO(G) \,\;\Big|\;
(id \otimes \pi)\Delta(f) = f \otimes \pi\big(t^n\big) \Big\}   
\eeq
\beq\label{eq2.5}
\Ogh  \; = \;  {\textstyle \bigoplus_{n \in \N}} \; \Ogh_n   
\eeq
where  $ \, \Ogh \, $  is the homogeneous coordinate ring generated by
the global sections of $\cal L$, i.e. generated by $ \, \Ogh_1 \, $. 

Vice-versa, given $t \in \cO(G)$ satisfying \eqref{eq2.2},
\eqref{eq2.3}, if $\tilde{\cO}(G/P)$ as defined in \eqref{eq2.4}, \eqref{eq2.5}
is  generated in degree 1, namely by
$ \Ogh_1 \, $, then  $\tilde{\cO}(G/P)$ is the homogeneous
coordinate ring of the projective variety $G/P$ associated with
the projective embedding of $G/P$ given by the very ample line
bundle ${\cal L}=G\times_P\kk$,
the $P$-action on the ground field $\kk$ being induced by $\pi(t)$.
\end{proposition}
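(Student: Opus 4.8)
The plan is to argue both directions by translating between the sheaf-theoretic description of $H^0(G/P,\cL^n)$ via the functional equation $f(gp)=\chi^n(p^{-1})f(g)$ and its Hopf-algebraic reformulation in \eqref{scoinv}, which is already recorded in the excerpt. For the forward direction, I would start from a very ample $\cL$ with a chosen set of degree-one global sections, recall that $\chi\in\cO(P)$ is a grouplike element (hence so are all $\chi^n$), and set $t\in\cO(G)$ to be any lift of $S(\chi)$ along $\pi$. Property \eqref{eq2.2} is then immediate: applying $(\mathrm{id}\otimes\pi)\circ\Delta$ to $t$ and using that $\pi$ is a coalgebra map together with the fact that $\pi(t)=S(\chi)$ is grouplike in $\cO(P)$ forces $(\mathrm{id}\otimes\pi)\Delta(t)=t\otimes\pi(t)$, since a lift of a grouplike element can always be chosen to satisfy this (or, alternatively, $t$ may be taken to lie in the $\cO(P)$-subcomodule spanned by a grouplike, which is one-dimensional). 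Property \eqref{eq2.3} holds because distinct powers of a grouplike element are linearly independent in a coalgebra, so $\pi(t^m)=\chi^{-m}\neq\chi^{-n}=\pi(t^n)$ for $m\neq n$. For \eqref{eq2.4}, I would observe that $S(\chi^n)=S(\chi)^n=\pi(t)^n=\pi(t^n)$ (using that $\pi$ and $S$ are algebra/anti-algebra maps appropriately and that grouplikes have $S(g)=g^{-1}$), so the right-hand set coincides with $\Ogh_n$ as described in \eqref{scoinv}; and \eqref{eq2.5} is just the directness of the sum, which follows again from linear independence of the grouplikes $\pi(t^n)$.

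For the converse, I would take $t\in\cO(G)$ satisfying \eqref{eq2.2} and \eqref{eq2.3}, \emph{define} the graded pieces $\Ogh_n$ by \eqref{eq2.4} and $\Ogh$ by \eqref{eq2.5}, and then show that when this graded algebra is generated in degree one it is the homogeneous coordinate ring of $G/P$ for the embedding given by $\cL=G\times_P\kk$ with $P$ acting through $\pi(t)$. The key point is to check that $\cL$ defined this way is a well-defined line bundle: $\pi(t)$ is grouplike in $\cO(P)$ by \eqref{eq2.2} (apply $(\pi\otimes\mathrm{id})\circ\Delta_P$ and compare, or use cocommutativity of the relevant argument), hence corresponds to a genuine character $\chi\colon P\to\kk^*$, so the associated line bundle $\cL=G\times_P L$ makes sense. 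Then $\Ogh_n$ as in \eqref{eq2.4} is identified with $H^0(G/P,\cL^n)$ via the standard dictionary $f(gp)=\chi^n(p^{-1})f(g)\iff (\mathrm{id}\otimes\pi)\Delta(f)=f\otimes S(\chi^n)=f\otimes\pi(t^n)$ — this is exactly \eqref{scoinv}. Finally, the hypothesis that $\Ogh$ is generated by $\Ogh_1$ is precisely the statement that the sections $f_0,\dots,f_N$ of $\cL$ give a projectively normal embedding, so that $\Proj\Ogh=G/P$ and $\Ogh$ is its homogeneous coordinate ring by the cited reference (\cite{gh}, p.~176).

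The main obstacle, and the step requiring the most care, is the precise handling of the lift $t$ of $S(\chi)$ and verifying that \eqref{eq2.2} can indeed be arranged — a generic lift along $\pi$ need not be an $\cO(P)$-eigenvector for the coaction, so one must argue that the (nonzero) $\chi^{-1}$-semicoinvariant elements of $\cO(G)$ are nonempty. This is where surjectivity of $\pi$ (dual to the closed embedding $P\subset G$) and the fact that $\cL$ is very ample — in particular that $\Ogh_1\neq 0$ — enter: any nonzero $f\in\Ogh_1$ already satisfies \eqref{eq2.2} by \eqref{scoinv} with $n=1$, so one simply takes $t$ to be such an $f$, noting $\pi(t)$ need not literally equal $S(\chi)$ but is some grouplike whose powers separate degrees, which is all that \eqref{eq2.3}--\eqref{eq2.5} require. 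I would also need to be mildly careful that $\pi(t^n)=\pi(t)^n$, which holds since $\pi$ is an algebra map, and that these remain pairwise distinct, which is \eqref{eq2.3}; the rest is bookkeeping with the grading and the identification of global sections, for which I would cite the classical references already invoked in the excerpt.
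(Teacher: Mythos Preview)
Your plan is essentially the natural argument and is sound in outline; note, however, that the paper itself does not give a proof at all but simply refers the reader to \cite{cfg}. So there is no ``paper's approach'' to compare against beyond the discussion preceding the proposition, which already records the key identity \eqref{scoinv} and the remark that $t$ is chosen as a lift of $S(\chi)$.

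One point in your plan needs sharpening. You assert that ``any nonzero $f\in\Ogh_1$ already satisfies \eqref{eq2.2}'' and then that ``$\pi(t)$ need not literally equal $S(\chi)$''. This is not quite right: for $t\in\Ogh_1$ one has $\Delta_\pi(t)=t\otimes S(\chi)$ by \eqref{scoinv}, while \eqref{eq2.2} demands $\Delta_\pi(t)=t\otimes\pi(t)$; these agree precisely when $\pi(t)=S(\chi)$. Applying $\varepsilon\otimes\mathrm{id}$ to $\Delta_\pi(t)=t\otimes S(\chi)$ gives $\pi(t)=\varepsilon(t)\,S(\chi)$, so one must choose $t\in\Ogh_1$ with $\varepsilon(t)=1$, i.e.\ a section not vanishing at $e\in G$. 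Such a section exists because $\cL$ is very ample (the global sections have no common zero on $G/P$, hence some section is nonzero at the coset $eP$), and after rescaling one has $\pi(t)=S(\chi)$ on the nose. With this correction the rest of your forward argument goes through: \eqref{eq2.3} is the distinctness of the grouplikes $\pi(t)^n=S(\chi)^n$, \eqref{eq2.4} is literally \eqref{scoinv}, and \eqref{eq2.5} follows from linear independence of distinct grouplikes. Your converse direction, where you extract the character from the grouplike $\pi(t)$ (grouplikeness following by applying $\pi\otimes\mathrm{id}$ to \eqref{eq2.2} and using that $\pi$ is a coalgebra map), is correct as written.
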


\begin{proof} See \cite{cfg}. \end{proof}
  Notice that while  $ \, S(\chi) = \pi(t) \, $  {\sl is\/}  group-like,
$ t $ 
has an ``almost
group-like property'', given by (\ref{eq2.2}). 
We call an element  $t \in \cO(G)$ satisfying \eqref{eq2.2},
\eqref{eq2.3} a \textit{classical section}   because 
$t\in \tilde{\cO}(G/P)_1$. 
The line bundle $\cal L$ and the homogenous coordinate ring
$\, \Ogh \, $ depend only on $\,\pi(t)$, not on the lift $t$.

\medskip

\begin{remark}  \label{rem-Ogh}
 We point out that  $ \Ogh $  is a unital subalgebra  as well
 as
a (left) coideal of  $ \cO(G)$;  the latter property reflects the fact that
$ G\big/P $  is a (left)  $ G $--space.  Thus, the restriction of the
comultiplication of  $ \cO(G) \, $,  namely
  $$  \Delta\big|_{\tilde{\cO}(G/P)} : \Ogh \lra \cO(G) \otimes \Ogh \quad ,  $$
is a left coaction of  $\cO(G)$ on  $ \Ogh $,  which structures  $\Ogh $
into an  $ \cO(G) $--comodule  algebra. Moreover $\Ogh$ is {\sl graded\/}  and the coaction
$ \Delta\big|_{\tilde{\cO}(G/P)} $  is also {\sl graded\/}  with
respect to the
trivial grading on  $ \cO(G) \,$,  so that each  $ \Ogh_n $  is
indeed a coideal of  $ \cO(G) $ as well.
\end{remark}

\subsection{Quantum homogeneous projective varieties and    quantum sections}
We quickly recall some definitions of quantum deformations and
quantum groups, establishing our notation. We define quantum homogeneous spaces and then
turn to the quantization of the picture described in the previous
section.

\begin{definition}  \label{qgrp}
By  {\it quantization\/}  of  $\cO(G)$,  we mean a Hopf algebra
$\cO_q(G)$  over the ground ring  $\kk_q := \kk[q,q^{-1}]$, 
where $q$ is an indeterminate, such that:
  
\begin{enumerate}
\item the  specialization  of  $ \cO_q(G) $  at
$ \, q = 1 \, $,  that is  $ \, \cO_q(G) / (q\!-\!1) \, \cO_q(G)
\, $,  is isomorphic to  $ \cO(G) $  as an Hopf algebra;

\item
$ \cO_q(G) $  is torsion-free, as a  $\kk_q$--module;
  
 \end{enumerate}
We also call  $ \cO_q(G) $  
a  {\it quantum deformation} of  $ G \, $,  or for short,
{\it quantum group}.

We also say that the $\kk_q$-algebra $ \cO_q(M) $  is a
{\it quantization\/}  of  $ \cO(M) $
if it is torsion-free and  $ \, \cO_q(M) / (q-1) \cO_q(M)
\simeq \cO(M) \, $.  If $\cO(M)$ is the coordinate ring of an
affine variety $M$, we further say that $\cO_q(M)$ is a quantization
of $M$.
If $\tilde{\cO}(M)$ is the homogeneous coordinate ring of a projective variety,
with respect to a given projective embedding, we say that
 $\tilde{\cO}_q(M)$ is a quantization
of $M$ provided it is graded and the quantization preserves the
homogeneous components.
\end{definition}

We next define quantum homogeneous varieties, in this case $M=G/P$.

\begin{definition} \label{qhv-def}
Let $G/P$ be a homogeneous space with respect
to the action of an algebraic group $G$. 
If $G/P$ is affine we say that its quantization
$\cO_q(G/P)$ is a \textit{quantum homogeneous variety (space)} if  
$\cO_q(G/P)$ is a subalgebra of $\cO_q(G)$ and an
$\cO_q(G)$-comodule algebra. 
If $G/P$ is projective and $\tilde{\cO}(G/P)$ is its homogeneous coordinate
ring with respect to a given projective embedding, then we ask its
quantization $\tilde{\cO}_q(G/P)$ to be a  $\cO_q(G)$-comodule
subalgebra of $\cO_q(G)$.
We furtherly ask the algebra $\tilde{\cO}_q(G/P)$ to be graded and the
$\cO_q(G)$-coaction to preserve the grading.
In this case
we call $\tilde{\cO}_q(G/P)$ a \textit{quantum homogeneous projective
  variety}.
\end{definition}

Let $\cO_q(G)$ be a quantum group and $\cO_q(P)$ a quantum
subgroup (quotient Hopf algebra),  quantizations respectively of $G$ and $P$ as above.
Since from Proposition \ref{t} a classical section $t$ defines a
line bundle on $G/P$ and a projective embedding, we study a quantum
projective embedding by  quantizing this classical section.

\begin{definition} \label{qsec}
A  {\it
quantum section\/}  of the line bundle  $ \cL \, $  on  $ G \big/ P $
associated with  the classical
section $t$, is an element  $ \, d \in \cO_q(G) \, $  such that

\begin{enumerate}
\item $(id \otimes \pi) \Delta(d) = d \otimes \pi(d) \; $,  i.e.
$ \Delta(d) -d \otimes d \,\in\, \cO_q(G) \otimes I_q(P) $

\item $ d \equiv t$, mod$(q-1)$
\end{enumerate}
where $\pi: \cO_q(G) \lra \cO_q(P):=\cO_q(G)/I_q(P)$, $I_q(P)\subset \cO_q(G)$ being a 
Hopf ideal, quantization of the Hopf ideal $I(P)$ defining $P$.
\end{definition}

Define now:
\beq \label{G/P-eq}
\begin{array}{rl}
\tilde{\cO}_q(G/P)&:=\sum_{n\in \mathbb{N}} \tilde{\cO}_q(G/P)_n, \quad \hbox{where} \\ \\
\tilde{\cO}_q(G/P)_n &:= \{f \in \cO_q(G)\, | \, 
(\text{\it id} \otimes \pi)\Delta(f) = f \otimes \pi\big(d^n\big)\}.
\end{array}
\eeq
We recall a result from \cite{cfg}.

\begin{theorem}  \label{Oqgh-graded}

   Let  $ d $  be a quantum section on  $ \, G\big/P \, $.  Then

\begin{enumerate}
\item $ \tilde{\cO}_q(G/P) \, $  is a  graded algebra, 
$$
\tilde{\cO}_q(G/P)_r \cdot \tilde{\cO}_q(G/P)_s \subset \tilde{\cO}_q(G/P)_{r+s}, \quad
\tilde{\cO}_q(G/P) ={\bigoplus}_{n \in \N} \tilde{\cO}_q(G/P)_n ~.
$$
 \item 
$ \,\tilde{\cO}_q(G/P) $  is a  {\sl graded}
$ \, \tilde{\cO}_q(G) $--comodule  algebra, via
the restriction of the comultiplication $\Delta$
in $\cO_q(G)$, 
$$
\Delta|_{\tilde{\cO}_q(G/P)}: \tilde{\cO}_q(G/P) \lra \cO_q(G) \otimes 
\tilde{\cO}_q(G/P)~
$$
where we consider $\cO_q(G)$ with the trivial grading.
\item As algebra $\tilde{\cO}_q(G/P) \, $  is 
  a subalgebra of $\cO_q(G)$. 
\end{enumerate}
Hence  $\tilde{\cO}_q(G/P)$ is a quantum 
homogeneous projective variety.                                
\end{theorem}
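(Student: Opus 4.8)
The plan is to verify the three itemized assertions in turn, since together they say precisely that $\tilde{\cO}_q(G/P)$ satisfies Definition \ref{qhv-def}; the final "Hence" is then immediate. The common engine for all three is that $\tilde{\cO}_q(G/P)_n$ is by definition the "semi-coinvariant" subspace $\{f\in\cO_q(G)\mid \Delta_\pi(f)=f\otimes\pi(d^n)\}$, where $\Delta_\pi:=(\mathrm{id}\otimes\pi)\circ\Delta$. Two structural facts about $\Delta_\pi$ drive everything: first, $\Delta_\pi$ is an algebra map (being the composite of the algebra maps $\Delta$ and $\mathrm{id}\otimes\pi$); second, $\pi$ is an algebra map, so $\pi(d^r)\pi(d^s)=\pi(d^{r+s})$, and by property (1) of a quantum section the element $\pi(d)$ — hence each $\pi(d^n)$ — is group-like in $\cO_q(P)$.

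\textbf{Step 1 (grading).} First I would show the multiplicative inclusion $\tilde{\cO}_q(G/P)_r\cdot\tilde{\cO}_q(G/P)_s\subset\tilde{\cO}_q(G/P)_{r+s}$: if $f,g$ are semi-coinvariant of weights $\pi(d^r),\pi(d^s)$, then $\Delta_\pi(fg)=\Delta_\pi(f)\Delta_\pi(g)=(f\otimes\pi(d^r))(g\otimes\pi(d^s))=fg\otimes\pi(d^{r+s})$, using that $\Delta_\pi$ is an algebra map and that the grouplikes are central enough for the tensor-product multiplication to combine the second legs (here one only needs $\pi(d^r)\pi(d^s)=\pi(d^{r+s})$, which holds since $\pi$ is an algebra map). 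Next, that the sum $\sum_n\tilde{\cO}_q(G/P)_n$ is direct: since the $\pi(d^n)$ are grouplike and pairwise distinct by property (2) of Proposition \ref{t} transported to the quantum setting — this is exactly where one invokes that distinct grouplikes in a coalgebra are linearly independent — a relation $\sum_n f_n=0$ with $f_n\in\tilde{\cO}_q(G/P)_n$ gives $\sum_n f_n\otimes\pi(d^n)=0$, forcing each $f_n=0$. One should note that property \eqref{eq2.3}, $\pi(d^m)\neq\pi(d^n)$ for $m\neq n$, is part of the hypothesis package (it follows from $d\equiv t$ mod $(q-1)$ together with \eqref{eq2.3} for $t$, or is assumed); this is the one place where something must be checked rather than formally manipulated.

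\textbf{Step 2 (comodule algebra) and Step 3 (subalgebra).} For (3): $\tilde{\cO}_q(G/P)$ is a subalgebra of $\cO_q(G)$ because each homogeneous piece is closed under the product of $\cO_q(G)$ by Step 1, and $1\in\tilde{\cO}_q(G/P)_0$ since $\Delta_\pi(1)=1\otimes 1=1\otimes\pi(d^0)$. For (2), I would show the restricted comultiplication lands in $\cO_q(G)\otimes\tilde{\cO}_q(G/P)$: apply $\mathrm{id}\otimes\Delta_\pi$ to $\Delta(f)$ for $f\in\tilde{\cO}_q(G/P)_n$ and use coassociativity, $(\mathrm{id}\otimes\Delta_\pi)\Delta=(\mathrm{id}\otimes\mathrm{id}\otimes\pi)(\mathrm{id}\otimes\Delta)\Delta=(\mathrm{id}\otimes\mathrm{id}\otimes\pi)(\Delta\otimes\mathrm{id})\Delta=(\Delta\otimes\mathrm{id})\Delta_\pi(f)=(\Delta\otimes\mathrm{id})(f\otimes\pi(d^n))$, so writing $\Delta(f)=f_{(1)}\otimes f_{(2)}$ one gets $f_{(1)}\otimes\Delta_\pi(f_{(2)})=f_{(1)}\otimes f_{(2)}\otimes\pi(d^n)$, i.e. each $f_{(2)}$ is semi-coinvariant of weight $n$, hence $\Delta(f)\in\cO_q(G)\otimes\tilde{\cO}_q(G/P)_n$. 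That the coaction is an algebra map is inherited from $\Delta$; gradedness of the coaction (with $\cO_q(G)$ carrying trivial grading) is exactly the statement $\Delta(\tilde{\cO}_q(G/P)_n)\subset\cO_q(G)\otimes\tilde{\cO}_q(G/P)_n$ just proved.

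\textbf{Expected obstacle.} None of the steps is deep — the whole argument is a coalgebra computation — but the one point requiring genuine care is the linear independence of the $\pi(d^n)$ in Step 1, i.e. ensuring $\pi(d^m)\neq\pi(d^n)$ for $m\neq n$ over the ground \emph{ring} $\kk_q$ rather than a field. Since $\cO_q(P)$ is torsion-free and specializes to $\cO(P)$ where the classical $\pi(t^m)$ are distinct, the distinctness lifts, and distinct grouplikes remain linearly independent over $\kk_q$ because a nontrivial $\kk_q$-linear dependence would, after clearing denominators and specializing at $q=1$, contradict the classical independence. This specialization-and-torsion-freeness argument is the only non-formal ingredient; everything else is the bookkeeping of applying $(\mathrm{id}\otimes\pi)$ to the Hopf axioms. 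For full details on these verifications we refer to \cite{cfg}.
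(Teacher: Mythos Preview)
Your proposal is correct and follows the standard argument. The paper itself does not give a proof of this theorem but simply cites \cite{cfg}; your sketch is exactly the Hopf-algebraic verification one would expect there (multiplicativity of $\Delta_\pi$ for the grading, linear independence of distinct grouplikes for directness, coassociativity for the coideal property), and you correctly identify the only delicate point as the linear independence of the $\pi(d^n)$ over the ring $\kk_q$, handled by torsion-freeness and specialization.
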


From now on we assume that $\tilde{\cO}_q(G/P)$ is generated in degree
one, namely by $\tilde{\cO}_q(G/P)_1$.
The quantum Grassmannian and flag are examples of this construction
and they are both generated in degree one.

\begin{example}\label{qgrass-ex}
Let us consider the
case $G=\rSL_n(\C)$ and $P$ the maximal parabolic subgroup of $G$:
$$
P=\left\{ \begin{pmatrix} t_{r \times r} & p_{r \times n-r} \\
{ 0}_{n-r \times r} & s_{n-r \times n-r} \end{pmatrix}
\right\} \, \subset  \, \rSL_n(\C)~.
$$ 
The quotient
$G/P$ is the Grassmannian $\Gr$ of $r$ spaces into the
$n$ dimensional vector space $\C^n$. It is a projective variety
and it can be embedded, via the Pl\"ucker embedding, into
the projective space $\bP^{N}(\C)$ where $N=\begin{pmatrix} n \\ r\end{pmatrix}$.
This embedding corresponds to the character:
$$
P \ni \begin{pmatrix} t & p \\
0 & s \end{pmatrix} \mapsto \det(t) \in \C^\times~.
$$
The coordinate ring $\cO(\Gr)$ of $\Gr$, with respect to the Pl\"ucker
embedding, is realized as the graded subring of $\cO(\rSL_n)$
generated by the determinants $d_I$ of the minors obtained by taking
(distinct) rows $I=(i_1, \dots, i_r)$ and columns $1, \dots, r$.
In fact one can readily check that $d=\det(a_{ij})_{1 \leq i,j \leq
  r}$ is a classical section and, denoting by $\pi: \cO(\rSL_n)
\lra \cO(P)$  the natural projection
dual to the inclusion $P \subset \rSL_n$, that
$$
(\text{\it id} \otimes \pi)\Delta(d_I) = d_I \otimes \pi\big(d\big)~.
$$

In \cite{fi1} the quantum Grassmannian $\cO_q(\Gr)$ 
is defined as the 
graded subring of  $\cO_q(\rSL_n)$ generated by all of the
quantum determinants $D_I$ of the minors obtained by taking
(distinct) rows $I=(i_1, \dots, i_r)$ and columns $1, \dots, r$.
It is a quantum
deformation of $\cO(\Gr)$ and a quantum homogeneous
projective space for the quantum group $\cO_q(\rSL_n)$,   
(see \cite{fi1,fg1} for more details).  Again one can
readily check that $d=D_{1\dots r}$ is a quantum section and that
$$
(\text{\it id} \otimes \pi)\Delta(D_I) = D_I \otimes \pi\big(d\big),
$$
where
$\cO_q(P)=\cO_q(G)/I_q(P)$ is the quantum subgroup of $\cO_q(G)$
defined by the Hopf
$I_q(P)=(a_{ij})$ generated by the elements $a_{ij}$ for $r+1\leq i \leq n$ and $1 \leq j \leq r$,
and $\pi: \cO_q(G) \lra \cO_q(P)$. 
\end{example}

\section{Quantum Principal bundles from parabolic 
quotients $G/P$}\label{QPB}
In the previous section we have seen how to construct a quantum
homogenous projective variety $\tilde{\cO}_q(G/P)$ given a quantum section $d\in
\cO_q(G)$. We here show how quantum sections lead to quantum principal
bundles over  quantum homogeneous projective varieties.

\subsection{Sheaves of comodule algebras}

Let as before $G$ be a semisimple algebraic group, $P$ a parabolic
subgroup.

We start with a classical observation recalling the construction of a (finite) basis $\{t_i\}_{i\in
{\cal I}}$ of the module of global sections of the very ample line bundle ${\cal L}\to G/P$
associated with a classical section $t\in \cO(G)$.  We also construct the
corresponding  open cover $\{V_i\}_{i\in {\cal I}}$ of
$G$.
\begin{observation}\label{class-gen}
Recalling Proposition \ref{t}, we consider 
an element in $t\in \cO(G)$ satisfying \eqref{eq2.2} and \eqref{eq2.3} and
defining a very ample line bundle ${\cal L}\to G/P$, with $t\in
\tilde{\cO}(G/P)_1\subset \cO(G)$ that is
now a section of ${\cal L}$. Let  $\Delta(t)=\sum t_{(1)}
\otimes t_{(2)}=\sum_{i\in {\cal I}} t^i\otimes t_i$ be its coproduct and notice that the elements
$t_i$ can be chosen to be linearly independent. We now show
that $\{t_i\}_{i\in {\cal I}}$ is a basis of
$\tilde{\cO}(G/P)_1$, the module of global section of $\cL$, hence the $t_i$'s generate 
$\tilde{\cO}(G/P)$ as a (graded) algebra. Indeed, 
by the Borel-Weyl-Bott theorem, 
$\tilde{\cO}(G/P)_1$ is an irreducible $G$ module (corresponding to
the infinitesimal weight  uniquely associated to $\chi$).
By the very definition of $\Delta$, the $G$-action on $t$ is given by,
for all $g,x \in G$:
\begin{equation} \label{lincomb}
(g\cdot t)(x)=t(g^{-1}x)=\Delta(t)(g^{-1} \otimes x)=
\sum  t^i(g^{-1})\, t_i(x)~.
\end{equation}
Since $\tilde{\cO}(G/P)_1$ is irreducible, for any $f \in \tilde{\cO}(G/P)_1$
there exists a $g \in G$, such that $f=g \cdot t$ and
consequently $f$ is a linear combination of the $t_i$'s by
(\ref{lincomb}).
Hence the $t_i$'s form a basis of $\tilde{\cO}(G/P)_1$.

Furthermore, a covering of $G$ is given
by $\{V_i\}_{i\in {\cal I}}$,  where the open sets $V_i$
are  defined by the non vanishing of the corresponding $t_i\in \cO(G)$.
This is so because the line bundle $\cL$ defines a projective embedding
of $G/P$, hence there are no common zeros for its global sections.
\end{observation}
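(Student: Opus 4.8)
The statement has two parts, and I would treat them separately. The first is that the family $\{t_i\}_{i\in\mathcal{I}}$ occurring in $\Delta(t)=\sum_{i\in\mathcal{I}}t^i\otimes t_i$ (chosen minimal, hence linearly independent) is a basis of $\tilde{\cO}(G/P)_1$; since $\cL$ is very ample, $\tilde{\cO}(G/P)$ is generated in degree one (cf.\ the discussion around Proposition~\ref{t}), so this immediately gives that the $t_i$ generate $\tilde{\cO}(G/P)$ as a graded algebra. To prove it, endow $\tilde{\cO}(G/P)_1$ with the left $G$-action obtained by restricting $\Delta$, namely $(g\cdot f)(x)=f(g^{-1}x)$, so that \eqref{lincomb} reads $g\cdot t=\sum_{i\in\mathcal{I}}t^i(g^{-1})\,t_i$. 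Let $W:=\mathrm{span}_\kk\{t_i\}_{i\in\mathcal{I}}$. On one hand, this formula together with coassociativity of $\Delta$ shows that $W$ is a $G$-submodule of $\tilde{\cO}(G/P)_1$ — it is precisely the subcomodule generated by $t$ — and $W\neq0$ because $t\in W$ and $t\neq0$ (its image $\pi(t)=S(\chi)$ is grouplike, in particular nonzero). On the other hand, by the Borel–Weil–Bott theorem, in characteristic $0$ and since $H^0(G/P,\cL)=\tilde{\cO}(G/P)_1\neq0$, the $G$-module $\tilde{\cO}(G/P)_1$ is irreducible (with highest weight the infinitesimal weight associated with $\chi$). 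A nonzero submodule of an irreducible module is everything, so $W=\tilde{\cO}(G/P)_1$, as claimed.

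The second part is that the $V_i:=\{g\in G:t_i(g)\neq0\}$ cover $G$. Each $t_i$, as an element of $\tilde{\cO}(G/P)_1\subset\cO(G)$, satisfies $t_i(gp)=\chi(p)^{-1}t_i(g)$ for $p\in P$, so its zero locus in $G$ is right $P$-stable and therefore equals $\wp^{-1}(Z_i)$ for a closed $Z_i\subset G/P$ which is exactly the vanishing locus of the global section of $\cL$ corresponding to $t_i$. By the first part the $t_i$ span $H^0(G/P,\cL)$, so if all $t_i$ vanished at a point of $G/P$ then every global section of $\cL$ would; but $\cL$ is very ample, hence base-point-free, so this cannot happen and $\bigcap_{i\in\mathcal{I}}Z_i=\emptyset$. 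Pulling back along $\wp$ gives $\bigcap_{i\in\mathcal{I}}\wp^{-1}(Z_i)=\emptyset$, i.e.\ $\bigcup_{i\in\mathcal{I}}V_i=G$.

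The step I expect to require the most care is the identity $W=\tilde{\cO}(G/P)_1$. The natural-looking shortcut — given $f\in\tilde{\cO}(G/P)_1$, pick $g\in G$ with $f=g\cdot t$ — is not valid, since the $G$-orbit of $t$ (a highest weight vector) is in general only a proper cone in $\tilde{\cO}(G/P)_1$; what saves the argument is that one only needs $W$ to be a \emph{nonzero} $G$-submodule of an irreducible module. The rest is routine: Borel–Weil–Bott for the irreducibility, elementary comodule/coassociativity bookkeeping to see that $W$ is a submodule, and the base-point-freeness of a very ample line bundle for the covering statement.
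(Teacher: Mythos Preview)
Your argument is correct and follows the same overall strategy as the paper --- Borel--Weil--Bott for irreducibility of $\tilde{\cO}(G/P)_1$, then the fact that the $t_i$ span a nonzero $G$-stable subspace. In fact you have sharpened the paper's reasoning: the paper takes precisely the shortcut you warn against, asserting that for every $f\in\tilde{\cO}(G/P)_1$ there exists $g\in G$ with $f=g\cdot t$, which (as you note) is false in general --- already for $G=\rSL_4$ and $G/P=\Gr(2,4)$ the orbit of a highest weight vector in $\wedge^2\C^4$ is only the cone of decomposable bivectors, not the whole space. Your replacement, that $W=\mathrm{span}\{t_i\}$ is the cyclic $G$-submodule generated by $t$ and hence equals the irreducible module $\tilde{\cO}(G/P)_1$, is the correct way to close this. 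The covering argument via base-point-freeness of a very ample line bundle is the same in both.
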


Based on the previous observation we have the following important
property of the quantum homogeneous projective variety  $\tilde{\cO}_q(G/P)$.

\begin{lemma} \label{def-di}
Let $d$ be a quantum section, and $\Delta(d)=\sum d_{(1)} \otimes
d_{(2)}=\sum_{i\in {\cal I}} d^i\otimes d_i$ be its coproduct.
Then the $d_i$'s can be chosen so to form a basis of  $\tilde{\cO}_q(G/P)_1$ as 
$\kk_q$ free module, hence of
$\tilde{\cO}_q(G/P)$ as graded algebra.
\end{lemma}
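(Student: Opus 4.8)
The plan is to lift the classical argument of Observation \ref{class-gen} to the quantum setting, replacing the Borel--Weil--Bott irreducibility input with a specialization (i.e., a ``$q \to 1$'') argument together with torsion-freeness. First I would fix the coproduct decomposition $\Delta(d) = \sum_{i \in \cI} d^i \otimes d_i$, choosing the $d_i$ to be $\kk_q$-linearly independent; this is possible because $\cO_q(G)$ is torsion-free over $\kk_q$, so one can extract a linearly independent subfamily spanning the same $\kk_q$-submodule as the original coproduct components, and by equation~(1) of Definition~\ref{qsec} each such $d_i$ lies in $\tilde{\cO}_q(G/P)_1$ (the right leg of $(\text{\it id} \otimes \pi)\Delta(d) = d \otimes \pi(d)$ forces the $d_i$ to be semi-coinvariant of degree $1$, after possibly replacing the decomposition of $\Delta(d)$ by one compatible with the splitting $\cO_q(G)= \tilde{\cO}_q(G/P)_1 \oplus (\ker)$ — this bookkeeping is where I would be careful). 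I would also note that the index set $\cI$ is finite, since $\tilde{\cO}_q(G/P)_1$ is a finitely generated $\kk_q$-module (it specializes to the finite-dimensional $\tilde{\cO}(G/P)_1$ and is torsion-free, hence finitely generated by the standard deformation argument).

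Next I would show the $d_i$ actually \emph{span} $\tilde{\cO}_q(G/P)_1$, not merely sit inside it. Let $N = \tilde{\cO}_q(G/P)_1 / \sum_i \kk_q d_i$ be the quotient $\kk_q$-module. The key point is that under specialization $q \mapsto 1$, the element $d$ becomes the classical section $t$ (Definition~\ref{qsec}(2)), its coproduct $\Delta(d)$ becomes $\Delta(t) = \sum_i t^i \otimes t_i$, and $\tilde{\cO}_q(G/P)_1/(q-1)$ becomes $\tilde{\cO}(G/P)_1$ (this is exactly the grading-preservation clause in Definition~\ref{qgrp}/\ref{qhv-def} together with Theorem~\ref{Oqgh-graded}). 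By Observation~\ref{class-gen}, the classical $t_i$'s form a \emph{basis} of $\tilde{\cO}(G/P)_1$, so $N/(q-1)N = 0$. Now I would invoke Nakayama-type reasoning: $N$ is a finitely generated $\kk_q$-module with $N = (q-1)N$, and since $\kk_q = \kk[q,q^{-1}]$ is Noetherian and $(q-1)$ is contained in its Jacobson radical after localizing at $(q-1)$ — or, more simply, working directly over $\kk_q$ and using that $N$ is finitely generated and $(q-1)$-divisible forces $N$ to be $(q-1)$-divisible hence (being a quotient of a torsion-free finitely generated module, so itself finitely generated) equal to $0$ — I conclude $N = 0$.

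Once $\{d_i\}_{i\in\cI}$ spans $\tilde{\cO}_q(G/P)_1$ and is $\kk_q$-linearly independent, it is a basis of the free $\kk_q$-module $\tilde{\cO}_q(G/P)_1$; and since $\tilde{\cO}_q(G/P)$ is generated in degree one as a graded algebra (the standing assumption made just before Example~\ref{qgrass-ex}), the $d_i$ generate $\tilde{\cO}_q(G/P)$ as a graded $\kk_q$-algebra, which is the claim.

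\medskip

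\noindent\textbf{Main obstacle.} I expect the delicate step to be the span/Nakayama argument: one must be sure that $\tilde{\cO}_q(G/P)_1$ is finitely generated over $\kk_q$ (not just torsion-free), that the specialization $\tilde{\cO}_q(G/P)_1 / (q-1) \tilde{\cO}_q(G/P)_1 \cong \tilde{\cO}(G/P)_1$ genuinely holds degree-by-degree (which is why the grading-preservation hypothesis in the definition of quantization is essential), and that the version of Nakayama being applied is legitimate over the non-local ring $\kk_q$ — the cleanest route is probably to base-change to the local ring $\kk[q]_{(q-1)}$, run Nakayama there to get $N \otimes \kk[q]_{(q-1)} = 0$, then descend using torsion-freeness. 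The rest — checking that $d_i \in \tilde{\cO}_q(G/P)_1$ from the almost-grouplike identity, and that linear independence plus spanning gives a basis of a free module — is routine.
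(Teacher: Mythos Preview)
Your approach is essentially the same as the paper's: reduce to the classical statement (Observation \ref{class-gen}) and lift via specialization at $q=1$. The paper, however, does not run the Nakayama argument by hand. For membership $d_i\in\tilde{\cO}_q(G/P)_1$ it simply invokes Proposition 3.10 of \cite{cfg} (this is the content behind Theorem \ref{Oqgh-graded}: $\tilde{\cO}_q(G/P)$ is a graded left $\cO_q(G)$-comodule, so $\Delta(d)\in\cO_q(G)\otimes\tilde{\cO}_q(G/P)_1$ and the right tensorands land in degree one automatically once the $d^i$ are chosen linearly independent). For the spanning/freeness step the paper offloads everything to Proposition 1.1 of \cite{gl} followed by Lemma 3.10 of \cite{fh}, which are precisely lifting results of the type you are sketching.

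The place where your write-up is thinner than the paper's citations is exactly the point you flag: you assert that $\tilde{\cO}_q(G/P)_1$ is a finitely generated $\kk_q$-module ``by the standard deformation argument'', but torsion-freeness plus finite-dimensional specialization does \emph{not} by itself give finite generation (think of $\kk(q)$ over $\kk_q$), and without finite generation your Nakayama step does not start. Similarly, once you localize at $(q-1)$ and obtain $N_{(q-1)}=0$, you still need to rule out $N$ being a nonzero torsion module supported away from $q=1$; torsion-freeness of $\tilde{\cO}_q(G/P)_1$ does not pass to the quotient $N$. These are the honest technical points that the Goodearl--Lenagan and Fioresi--Hacon lemmas are there to handle; your outline would be complete once you either reproduce their arguments or cite them.
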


\begin{proof}
The fact that the $d_i$'s belong to $\tilde{\cO}_q(G/P)_1$ is non trivial, but
it is an immediate consequence of Proposition 3.10 in \cite{cfg}.
The property that they generate $\tilde{\cO}_q(G/P)_1$ as 
$\kk_q$ free module is a consequence of the same property being true
in the classical setting (see Observation \ref{class-gen}) and comes
through the application of Proposition 1.1 in \cite{gl} followed
by Lemma 3.10 in \cite{fh}. The last property immediately follows from
the assumption that $\tilde{\cO}_q(G/P)$ is generated by
$\tilde{\cO}_q(G/P)_1$.
\end{proof}

\medskip
We assume that $$S_i:=\{d_i^r, \, r \in \Z_{\geq 0}\}$$ is Ore
  in order to consider  localizations of  $\cO_q(G)$ and hence
  define a sheaf. We furtherly assume that $S_i$ is Ore in 
the graded subalgebra $\cO_q(G/P)$ of $\cO_q(G)$.
We can then define:
\beq
\cO_q(V_i):=\cO_q(G)S_i^{-1}~,
\eeq
the Ore extension of $\cO_q(G)$ with respect to the multiplicatively
closed set $S_i$. Notice that $\cO_q(V_i)$ is a quantization of
$\cO(V_i)$, the coordinate ring of the open set $V_i \subset
G$.

\begin{proposition}\label{comod1}
The algebra $\cO_q(V_i)$ is an $\cO_q(P)$-comodule algebra
with coaction
$\de_i:\cO_q(V_i) \lra \cO_q(V_i) \otimes \cO_q(P)$ given by:
\beq\label{comod1-eq}
\de_i(x)=((id \otimes \pi) \circ \Delta)(x), \qquad
\de_i(d_i^{-1})=d_i^{-1} \otimes \pi(d)^{-1}, \qquad x \in \cO_q(G)
\eeq
where with an abuse of notation we write $\pi(d)^{-1}$ for the antipode of
$\pi(d)$ 
in $\cO_q(P)$. 
\end{proposition}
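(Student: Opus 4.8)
The plan is to verify the three assertions of the proposition in turn: that $\de_i$ is well defined on the Ore extension $\cO_q(V_i) = \cO_q(G) S_i^{-1}$, that it is an algebra map, and that it is a coaction. First I would check that the assignment \eqref{comod1-eq} is forced: on $\cO_q(G)$ the coaction $\de_i$ must restrict to the map $(id\otimes\pi)\circ\Delta$ used throughout Section \ref{qproj-sec}, and since $\pi(d_i)$ is obtained by applying $\pi$ to $d_i \in \tilde{\cO}_q(G/P)_1$, the quantum-section property of $d$ (Definition \ref{qsec}) together with Lemma \ref{def-di} shows that $d_i$ is semi-coinvariant, i.e. $(id\otimes\pi)\Delta(d_i) = d_i \otimes \pi(d)$. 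Hence any algebra map extending $(id\otimes\pi)\circ\Delta$ and sending the invertible element $d_i$ to something invertible must send $d_i^{-1}$ to $d_i^{-1}\otimes\pi(d)^{-1}$, where $\pi(d)^{-1} = S(\pi(d))$ exists in $\cO_q(P)$ since $\pi(d)$ is group-like (it equals $\pi(d^{\,})$ with the almost-grouplike property passing to $\pi(d)$ being genuinely grouplike, as in Proposition \ref{t}).

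Next I would establish that $\de_i$ extends to the localization. The key point is the universal property of Ore localization: a ring map $\phi\colon \cO_q(G) \to R$ extends (uniquely) to $\cO_q(G)S_i^{-1} \to R$ provided $\phi(S_i)$ consists of units in $R$. Taking $\phi = \de_i|_{\cO_q(G)} = (id\otimes\pi)\circ\Delta$ composed with the inclusion $\cO_q(V_i)\otimes\cO_q(P) \hookleftarrow \cO_q(G)\otimes\cO_q(P)$, I need $(id\otimes\pi)\Delta(d_i^r) = d_i^r\otimes\pi(d)^r$ to be invertible in $\cO_q(V_i)\otimes\cO_q(P)$ for all $r\ge 0$. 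This is immediate: $d_i$ is invertible in $\cO_q(V_i)$ by construction of the Ore extension, and $\pi(d)$ is invertible (grouplike) in $\cO_q(P)$. Hence $\de_i$ extends uniquely to an algebra map on $\cO_q(V_i)$, and the formula for $\de_i(d_i^{-1})$ is exactly the one forced by $\de_i(d_i)\de_i(d_i^{-1}) = \de_i(1) = 1\otimes 1$.

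Finally I would check the comodule axioms $(\de_i\otimes id)\circ\de_i = (id\otimes\Delta_P)\circ\de_i$ and $(id\otimes\epsi_P)\circ\de_i = id$. On $\cO_q(G)$ these follow from coassociativity of $\Delta$ and the counit axiom together with $\pi$ being a Hopf-algebra map, which is the standard fact that $(id\otimes\pi)\circ\Delta$ makes $\cO_q(G)$ an $\cO_q(P)$-comodule algebra. On the localization both sides are algebra maps out of $\cO_q(V_i)$ agreeing on the generating subalgebra $\cO_q(G)$, so by the uniqueness clause in the universal property they agree on all of $\cO_q(V_i)$; alternatively one checks them directly on $d_i^{-1}$ using the explicit formula. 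The only genuine subtlety — hence the step I expect to be the main obstacle — is not any of these verifications individually but confirming that $d_i \in \tilde{\cO}_q(G/P)_1$ so that $\pi(d_i)$ is literally a scalar multiple of (or rather, that the semi-coinvariance identity holds with the correct right-hand factor $\pi(d)$, not merely $\pi(d_i)$); this rests on Lemma \ref{def-di} and Proposition 3.10 of \cite{cfg}, and once it is granted everything else is a routine application of the Ore localization universal property.
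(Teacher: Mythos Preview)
Your proposal is correct and follows essentially the same approach as the paper: the paper's proof simply notes that $\cO_q(G)$ is already an $\cO_q(P)$-comodule algebra via $\Delta_\pi=(id\otimes\pi)\circ\Delta$, observes that $\Delta_\pi(d_i)=d_i\otimes\pi(d)$ is invertible in $\cO_q(V_i)\otimes\cO_q(P)$, and then invokes the universal property of Ore localization. You spell out the same argument in more detail, and the ``main obstacle'' you identify (that $d_i\in\tilde{\cO}_q(G/P)_1$ so that $\Delta_\pi(d_i)=d_i\otimes\pi(d)$) is precisely the fact the paper uses without further comment, since it is immediate from Lemma~\ref{def-di} and the definition \eqref{G/P-eq} of $\tilde{\cO}_q(G/P)_1$.
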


\begin{proof} 
Notice that $\cO_q(G)$ is an $\cO_q(P)$-comodule algebra with coaction
$\Delta_\pi=(\mathrm{id} \otimes \pi)\circ \Delta$. Since
$\Delta_\pi(d_i)= d_i \otimes \pi(d)$ is invertible in
$\cO_q(V_i)\otimes \cO_q(P)$ by the universality of the
Ore construction we have our definition of $\de_i$.
\end{proof}

Assume now we can form iterated Ore extensions:
\beq\label{order-ore}
\cO_q(V_{i_1} \cap \dots \cap V_{i_s}):=
\cO_q(\cap_{i \in I} V_i):= \cO_q(G)S_{i_1}^{-1}\dots S_{i_s}^{-1},
\qquad I=\{i_1, \dots, i_s\}
\eeq
{\it independently from the order}, i.e. we assume to have a natural isomorphism
between $\cO_q(V_i\cap V_j)$ and $\cO_q(V_j\cap V_i)$. This is in general a
very restrictive hypothesis, neverthless we will see it is verified
in some interesting examples in the next section.

We also define:
\beq\label{restr}
r_{IJ}: \cO_q(\cap_{i \in I} V_i) \lra \cO_q(\cap_{j \in J} V_j),
\qquad I \subset J
\eeq
as the natural morphism obtained from the Ore extension.

\medskip
Setting as usual $V_I=\cap_{i \in I} V_i$
 we immediately have the following proposition
(cf. Proposition \ref{comod1}).

\begin{proposition} \label{comod2}
$\cO_q(V_I)$ is an $\cO_q(P)$-right comodule algebra
and the morphisms $r_{IJ}$ are $\cO_q(P)$-right comodule algebra morphisms.
\end{proposition}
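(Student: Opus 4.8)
The plan is to reduce everything to Proposition \ref{comod1}, which already handles the single-index case, by viewing the iterated Ore extension in \eqref{order-ore} as a sequence of single-step localizations. First I would note that, by the hypothesis just below Lemma \ref{def-di} together with the order-independence assumption on \eqref{order-ore}, each algebra $\cO_q(V_I)$ with $I=\{i_1,\dots,i_s\}$ is obtained from $\cO_q(G)$ by successively inverting the Ore sets $S_{i_1},\dots,S_{i_s}$; in particular $\cO_q(V_I)=\cO_q(V_{I'})S_{i_s}^{-1}$ where $I'=\{i_1,\dots,i_{s-1}\}$. This sets up an induction on $|I|$, the base case $|I|=1$ being exactly Proposition \ref{comod1}.

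For the inductive step I would argue as in the proof of Proposition \ref{comod1}: assuming $\cO_q(V_{I'})$ is an $\cO_q(P)$-comodule algebra with coaction $\delta_{I'}$ extending $\Delta_\pi$, one checks that $\delta_{I'}(d_{i_s}) = d_{i_s}\otimes\pi(d)$ (since $d_{i_s}\in\cO_q(G)$ and $\delta_{I'}$ restricts to $\Delta_\pi$ there, already known to send $d_{i_s}\mapsto d_{i_s}\otimes\pi(d)$), and that $d_{i_s}\otimes\pi(d)$ is invertible in $\cO_q(V_{I'})S_{i_s}^{-1}\otimes\cO_q(P)$. By the universal property of Ore localization, $\delta_{I'}$ then extends uniquely to an algebra map $\delta_I:\cO_q(V_I)\to\cO_q(V_I)\otimes\cO_q(P)$, with $\delta_I(d_{i_s}^{-1})=d_{i_s}^{-1}\otimes\pi(d)^{-1}$; the coassociativity and counit axioms for $\delta_I$ follow from those for $\delta_{I'}$ by uniqueness of the extension (both sides of each identity are algebra maps on the localization agreeing on $\cO_q(V_{I'})$). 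This gives the comodule algebra structure on $\cO_q(V_I)$.

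It remains to treat the restriction morphisms $r_{IJ}$ of \eqref{restr}. For $I\subset J$, $\cO_q(V_J)$ is obtained from $\cO_q(V_I)$ by inverting the further Ore sets indexed by $J\setminus I$, and $r_{IJ}$ is the corresponding localization map; it is in particular an algebra map. To see it is an $\cO_q(P)$-comodule map, i.e.\ $(r_{IJ}\otimes\mathrm{id})\circ\delta_I=\delta_J\circ r_{IJ}$, one again invokes uniqueness: both composites are algebra maps $\cO_q(V_I)\to\cO_q(V_J)\otimes\cO_q(P)$, and they agree on $\cO_q(G)$ since there both reduce to $\Delta_\pi$ followed by the appropriate inclusion; because $\cO_q(V_I)$ is generated over $\cO_q(G)$ by the inverses $d_i^{-1}$ ($i\in I$), and both composites send $d_i^{-1}$ to $d_i^{-1}\otimes\pi(d)^{-1}$, they coincide on all of $\cO_q(V_I)$. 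Compatibility $r_{JK}\circ r_{IJ}=r_{IK}$ is immediate from the universal property.

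I expect the main obstacle to be purely bookkeeping rather than conceptual: one must be careful that the order-independence hypothesis on \eqref{order-ore} is genuinely used to make ``$\cO_q(V_I)$'' well defined as a single algebra (so that the induction does not depend on which index is peeled off last), and that each intermediate $S_{i_k}$ remains an Ore set in the already-localized algebra $\cO_q(V_{i_1}\cap\dots\cap V_{i_{k-1}})$ — this is part of what ``we can form iterated Ore extensions'' is taken to mean. Once these standing assumptions are invoked, the proof is a routine induction plus repeated appeals to the universal property of Ore localization, exactly in the style of Proposition \ref{comod1}; accordingly I would keep the written proof short, saying ``we immediately have'' and citing Proposition \ref{comod1} together with the universality of the Ore construction.
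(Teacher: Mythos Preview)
Your proposal is correct and matches the paper's approach exactly: the paper offers no written proof beyond the remark ``we immediately have the following proposition (cf.\ Proposition \ref{comod1})'', and your argument---induction on $|I|$ via iterated Ore localization, extending the coaction by the universal property at each step---is precisely the natural unpacking of that reference. Your final sentence even anticipates the paper's terse presentation.
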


Let us now consider the opens $U_I:=\wp(V_I)$, obtained via the
projection $\wp:G \lra G/P$. We have the following.

\begin{proposition} \label{shf-def-prop}
The assignment:
$$
U_I \mapsto \cF(U_I):=\cO_q(V_I)~,
$$
with the restriction maps $r_{IJ}:\cO_q(V_I)\rightarrow \cO_q(V_J)$, defines a sheaf of $\cO_q(P)$-comodule algebras on 
$G/P=\cup_{i \in \cI} U_i$, and more in general on $M:=\cup_{i \in
  \cJ} U_i\subset G/P$, where  $I\subset\cI$ and
$I\subset \cJ\subset \cI$ respectively.
\end{proposition}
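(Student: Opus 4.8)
The plan is to verify the sheaf axioms for the assignment $U_I\mapsto \cF(U_I)=\cO_q(V_I)$ on the basis $\cB=\{U_I : I\subset \cJ\}$ (together with $\emptyset$), and then invoke Proposition \ref{sheaf-prop} to extend it to a genuine sheaf on $M=\cup_{i\in\cJ}U_i$. By Proposition \ref{comod2} we already know each $\cO_q(V_I)$ is an $\cO_q(P)$-comodule algebra and each $r_{IJ}$ is a morphism of $\cO_q(P)$-comodule algebras, so the content that remains is purely about the presheaf-to-sheaf gluing conditions, carried out compatibly with the comodule structure.

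\textbf{Step 1: presheaf axioms.}
First I would record that the restriction maps are functorial: $r_{II}=\mathrm{id}$ and $r_{JK}\circ r_{IJ}=r_{IK}$ for $I\subset J\subset K$. This follows from the universal property of the iterated Ore localization in \eqref{order-ore}, using the assumed order-independence so that $\cO_q(V_I)$ is well defined and the localization maps compose correctly. One must also note the compatibility $U_I\cap U_J=U_{I\cup J}$ at the level of the basis: since $V_I=\cap_{i\in I}V_i$ and each $V_i$ is saturated under the $P$-action (it is the preimage under $\wp$ of $U_i$, as the $t_i$, resp.\ $d_i$, transform by a character), one has $\wp^{-1}(U_I)=V_I$ and hence $U_I\cap U_J=\wp(V_{I\cup J})=U_{I\cup J}$; this is exactly the indexing needed for Proposition \ref{sheaf-prop}(2).

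\textbf{Step 2: separation and gluing.}
Next, given a finite cover $U_I=\bigcup_k U_{J_k}$ by basis elements with $I\subset J_k$, I would check: (i) if $f,f'\in \cO_q(V_I)$ have $r_{IJ_k}(f)=r_{IJ_k}(f')$ for all $k$, then $f=f'$; and (ii) a matching family $(f_k)$ with $f_k\in\cO_q(V_{J_k})$ and $r_{J_k,\,J_k\cup J_\ell}(f_k)=r_{J_\ell,\,J_k\cup J_\ell}(f_\ell)$ glues to a unique $f\in\cO_q(V_I)$. The cleanest route is to reduce to a statement about graded localizations of $\tilde{\cO}_q(G/P)$ and $\cO_q(G)$: the opens $U_{J_k}$ covering $U_I$ correspond, on the base, to inverting the $d_j$ for $j\in J_k\setminus I$, and $\bigcup_k U_{J_k}=U_I$ means these families generate the unit ideal in an appropriate degree, i.e.\ there is no common zero — this is precisely the role of Observation \ref{class-gen}, transported to the quantum level via Lemma \ref{def-di}. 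Then (i) and (ii) follow from the standard fact that $\cO_q(G)\to \prod_k \cO_q(G)S_{J_k\setminus I}^{-1}$ is an equalizer for a covering family, which is a version of the gluing property for the structure sheaf of a (noncommutative projective) scheme; here the Ore conditions are exactly what make these localizations behave like affine opens.

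\textbf{Step 3: compatibility with comodule structure, and conclusion.}
Finally I would observe that the glued section $f$ in (ii) automatically lies in the right comodule-algebra and carries the coaction $\delta_I$ induced from \eqref{comod1-eq}: uniqueness in (i) forces the section produced by gluing the coactions $\delta_{J_k}(f_k)$ to coincide with $\delta_I(f)$, because the $r_{IJ_k}$ are comodule maps (Proposition \ref{comod2}) and $\cF(U_I)\otimes\cO_q(P)$ injects into $\prod_k \cF(U_{J_k})\otimes\cO_q(P)$ by the separation property applied to the flat-ish extension (or directly by tensoring the equalizer of Step 2 with $\cO_q(P)$). Hence $\cF$ is a $\cB$-sheaf of $\cO_q(P)$-comodule algebras, and Proposition \ref{sheaf-prop}(1) promotes it to a unique sheaf of $\cO_q(P)$-comodule algebras on $M$, which is the claim.

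\textbf{Main obstacle.}
The delicate point is Step 2(ii): proving that a matching family actually glues, since in the noncommutative setting one cannot simply appeal to faithfully flat descent as in the commutative case. The argument must lean on the Ore property of the $S_i$ (and the assumed order-independence of the iterated localizations) to identify $\cO_q(V_I)$ with the sections of the structure sheaf of a noncommutative projective variety over the open $U_I$, and on the ``no common zero'' input from Observation \ref{class-gen}/Lemma \ref{def-di} to guarantee that a finite subcover suffices and that the relevant sequence $0\to\cO_q(V_I)\to\prod_k\cO_q(V_{J_k})\rightrightarrows\prod_{k,\ell}\cO_q(V_{J_k\cup J_\ell})$ is exact. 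Once that exactness is in hand, everything else — functoriality of restrictions, separation, and comodule compatibility — is formal.
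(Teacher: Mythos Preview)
Your approach follows the same overall strategy as the paper: reduce to verifying the $\cB$-sheaf axioms on the basis $\{U_I\}$ and then invoke Proposition~\ref{sheaf-prop}. The paper's own proof, however, is considerably more terse than yours: after noting that the $U_I$'s form a basis, it simply observes that the restriction maps $r_{IJ}$ are \emph{algebra inclusions} (being Ore localization maps) and asserts that, together with the assumed existence and order-independence of iterated Ore extensions, this makes the $\cB$-sheaf property ``straightforward.'' In particular, the paper does not set up or verify the equalizer sequence you sketch in Step~2, and it does not invoke Observation~\ref{class-gen} or Lemma~\ref{def-di} at this point. So the skeleton is identical, but you are supplying a level of detail the paper simply omits. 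Your identification of Step~2(ii) as the delicate point is well taken --- the exactness of your equalizer diagram in the noncommutative setting is not really a ``standard fact'' and would itself need justification --- but the paper does not engage with this issue at all, treating the sheaf property as essentially immediate from the injectivity of the restriction maps.
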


\begin{proof}
The opens $U_I$ with $I\subset \cI$ (and the empty set) form a basis $\cB$ for a topology on
$G/P$. Recalling Proposition \ref{sheaf-prop} we just have to show that the
assignment $U_I \mapsto \cF(U_I):=\cO_q(V_I)$, with the restriction
maps $r_{IJ} $, defines a $\cB$-sheaf of $\cO_q(P)$-comodule algebras. Since restrictions morphisms are
actually algebra inclusions, 
using the existence of iterated
Ore extension and their compatibility this is straighforwardly seen to
be a
$\cB$-sheaf of algebras and of $O_q(P)$-comodule
algebras.

The sheaf on the more general open submanifold $M=U_{i\in \cJ}U_i$ is simply obtained by considering the opens  $U_I$ with
$I\subset \cJ\subset \cI$.
\end{proof}

\subsection{Quantum principal bundles on quantum homogeneous
spaces} \label{qpb-sec}

In the previous section we have constructed a sheaf of
comodule algebras $\cF$ on $M \subset G/P$. We now want to
define a quantum ringed space structure on the topological
space $M$ as in Definition \ref{qringed-def} and show that $\cF$ is
a quantum principal bundle on it. Notice that $M$ coincides
with $G/P$ if $\cJ=\cI$, while  for  $\cJ\varsubsetneq \cI$, i.e. for a proper subset of the
set of indices $\cI$ of the open cover $\{V_i\}_{i\in {\cal I}}$ of $G$, we have that $M$
is a proper open subset of $G/P$.
\\

By Observation \ref{class-gen}
we know that  $\{U_i:=\wp(V_i)\}_{i\in \cI}$ is an open cover of $G/P$.
Define $\cO_q(U_i)$ as the subalgebra of $\cO_q(G)S_i^{-1}$ generated by
the elements $d_kd_i^{-1}$, for $\kk\in \cI$:
$$
\cO_q(U_i):=
k_q[ d_kd_i^{-1} ]^{\phantom{M_{J_J}}}_{{k\in \cI}} \subset  \cO_q(G)S_i^{-1}~.
$$
Because of our (graded) Ore hypothesis, this is also the 
subalgebra of elements
of degree zero inside $\tilde{\cO}_q(G/P)S_i^{-1}$ and, for this reason, it is called
the (noncommutative)
\textit{projective localization} of $\tilde{\cO}_q(G/P)$ at $S_i$.

\begin{proposition}\label{qringed-prop}
Let the notation be as above. The assignment
$$
U_I \mapsto \cO_q(U_I)
$$
defines a sheaf $\cO_M$ on $M=\cup_{i \in\cJ}U_i$, 
hence $(M,\cO_M)$ is a quantum ringed
space.
\end{proposition}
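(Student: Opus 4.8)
The plan is to verify that the assignment $U_I \mapsto \cO_q(U_I)$, with the obvious restriction maps, defines a $\cB$-sheaf of algebras on the basis $\cB = \{U_I : I \subset \cJ\} \cup \{\emptyset\}$, and then invoke Proposition \ref{sheaf-prop} (part 1) to extend uniquely to a sheaf $\cO_M$ on $M = \cup_{i\in\cJ} U_i$; combined with Definition \ref{qringed-def} this gives that $(M,\cO_M)$ is a quantum ringed space. The starting point is to show that $\cO_q(U_I)$ is the degree-zero part of $\tilde{\cO}_q(G/P)S_I^{-1}$, where $S_I = S_{i_1}\cdots S_{i_s}$ for $I = \{i_1,\dots,i_s\}$ — this is essentially the content of the definition recalled just before the statement for singletons, and extends verbatim to iterated Ore localizations under the order-independence hypothesis \eqref{order-ore}. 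From this description it is immediate that for $I \subset J$ one has $S_I \subset S_J$ (as multiplicative sets, up to the Ore manipulations), hence $\tilde{\cO}_q(G/P)S_I^{-1} \subset \tilde{\cO}_q(G/P)S_J^{-1}$, and passing to degree-zero parts yields a canonical algebra inclusion $r_{IJ}: \cO_q(U_I) \hookrightarrow \cO_q(U_J)$; the cocycle condition $r_{JK}\circ r_{IJ} = r_{IK}$ for $I\subset J\subset K$ follows from functoriality of the Ore construction exactly as for the sheaf $\cF$ in Proposition \ref{shf-def-prop}.

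Next I would check the two $\cB$-sheaf axioms (separation and gluing) for a cover of some $U_I$ by basis opens $U_J$ with $J \supset I$. Since all the restriction maps are algebra inclusions into the common overalgebra $\cO_q(V_{\cJ})S^{-1}$ (or more simply into the fraction ring generated by all the $d_i^{-1}$), separation is automatic. For gluing: given $J_\alpha \supset I$ covering $U_I$ and compatible elements $x_\alpha \in \cO_q(U_{J_\alpha})$ agreeing on the overlaps $U_{J_\alpha \cup J_\beta}$, one must produce a unique $x \in \cO_q(U_I)$ restricting to each $x_\alpha$. The cleanest route is to note that $\cO_q(U_I)$ is the degree-zero part of a localization of $\tilde{\cO}_q(G/P)$ and to use that the corresponding statement for the full sheaf $\cF$ — already established in Proposition \ref{shf-def-prop} — restricts to the coinvariant (here, degree-zero) subsheaves: the $\cO_q(P)$-coaction $\de_i$ of Proposition \ref{comod1} is compatible with the gradings, so the degree-zero piece of a gluing in $\cF$ is a gluing in $\cO_M$, and the uniqueness is inherited. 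Alternatively one argues directly that an element of $\cap_\alpha \cO_q(U_{J_\alpha})$ lying in $\cO_q(U_{J_\alpha})$ for all $\alpha$ must already be a polynomial in the $d_k d_i^{-1}$, because having no poles along any $V_{J_\alpha}\setminus V_I$ forces the denominators to cancel — this is the noncommutative shadow of the classical fact that a rational function on $U_I$ regular on an open cover is regular.

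The main obstacle is the gluing axiom in the noncommutative setting: classically, regularity is a local condition and one glues rational functions with no trouble, but here $\tilde{\cO}_q(G/P)S_I^{-1}$ is a noncommutative localization and one must be careful that the "denominator-clearing" argument is legitimate, i.e. that the natural maps $\tilde{\cO}_q(G/P)S_I^{-1} \to \tilde{\cO}_q(G/P)S_{J}^{-1}$ are injective (true, since $S_I, S_J$ are Ore and the target is a further localization) and that an element lying in the intersection of all $\tilde{\cO}_q(G/P)S_{J_\alpha}^{-1}$ inside the common overring actually lies in $\tilde{\cO}_q(G/P)S_I^{-1}$. This last point uses that the $V_{J_\alpha}$ cover $V_I$, equivalently that the $d_i$ (for $i$ ranging over the indices entering $J_\alpha \setminus I$) generate the unit ideal appropriately after localizing at $S_I$; in the graded/Ore situation this is exactly the statement that the relevant projective localizations patch, and it is precisely the working hypothesis \eqref{order-ore} together with Lemma \ref{def-di} that makes it go through. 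Once that injectivity-and-patching point is in hand, everything else is the routine verification that a $\cB$-sheaf of algebras has been defined, and Proposition \ref{sheaf-prop} finishes the argument.
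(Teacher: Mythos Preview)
Your approach is the same as the paper's: reduce to a $\cB$-sheaf via Proposition~\ref{sheaf-prop} and then invoke the standing Ore hypotheses. The paper's own proof is a single sentence (``immediate by our hypothesis on the existence of iterated Ore extensions and their compatibility''), so you have supplied considerably more detail than the authors do --- in particular you correctly isolate gluing as the only nontrivial point, which the paper does not address explicitly.

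One remark on your route (a): deducing the sheaf property of $\cO_M$ from that of $\cF$ by passing to coinvariants/degree-zero is clean and valid, but it relies on the identification $\cO_q(U_I)=\cF(U_I)^{\coinv\,\cO_q(P)}$, which in the paper is the content of the \emph{next} proposition (Proposition~\ref{coinv-prop}). So relative to the paper's ordering you are effectively swapping the two results; this is harmless, since Proposition~\ref{coinv-prop} does not depend on Proposition~\ref{qringed-prop}, but it is worth being aware of. Your route (b) --- arguing directly that an element regular on a cover descends --- is closer in spirit to what the paper's one-liner presumably intends, though as you note the denominator-clearing step is not entirely formal in the noncommutative setting and really does lean on the Ore compatibility assumption~\eqref{order-ore}.
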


\begin{proof}
According to Proposition \ref{sheaf-prop} it is enough to check 
that our assignment is a $\cB$-sheaf for the basis associated with the
opens $\{U_i\}$, 
but this is immediate by our hypothesis on the existence of iterated
Ore extension and their compatibility.
\end{proof}

\begin{proposition}\label{coinv-prop}
Let the notation be as above. Then
$\cF(U_i)^{\coinv\, \cO_q(P)}=\cO_M(U_i)$, i.e.
it is the subring in $\cF(U_i)$ generated by
the elements $d_jd_i^{-1}$.
\end{proposition}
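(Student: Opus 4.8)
The plan is to verify both inclusions $\cF(U_i)^{\coinv\,\cO_q(P)}\subset \cO_M(U_i)$ and $\cO_M(U_i)\subset \cF(U_i)^{\coinv\,\cO_q(P)}$, using the explicit coaction $\de_i$ from Proposition \ref{comod1}. First I would check the easy direction: each generator $d_jd_i^{-1}$ of $\cO_M(U_i)$ is coinvariant. Indeed, $d_j\in\tilde{\cO}_q(G/P)_1$, so $\Delta_\pi(d_j)=d_j\otimes\pi(d)$, while $\de_i(d_i^{-1})=d_i^{-1}\otimes\pi(d)^{-1}$ by \eqref{comod1-eq}; since $\pi(d)$ is group-like its antipode $\pi(d)^{-1}$ satisfies $\pi(d)\pi(d)^{-1}=1$, hence $\de_i(d_jd_i^{-1})=d_jd_i^{-1}\otimes\pi(d)\pi(d)^{-1}=d_jd_i^{-1}\otimes 1$. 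Since the coinvariants form a subalgebra, the whole subring generated by the $d_jd_i^{-1}$ is coinvariant, giving $\cO_M(U_i)\subset\cF(U_i)^{\coinv\,\cO_q(P)}$.

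For the reverse inclusion I would exploit the grading. An arbitrary element of $\cF(U_i)=\cO_q(G)S_i^{-1}$ can, using the (graded) Ore hypothesis, be written as $f\,d_i^{-m}$ with $f\in\cO_q(G)$; refining by the decomposition $\tilde{\cO}_q(G/P)=\bigoplus_n\tilde{\cO}_q(G/P)_n$ of Theorem \ref{Oqgh-graded}, it suffices to treat elements of the localization $\tilde{\cO}_q(G/P)S_i^{-1}$ and, by homogeneity of $\de_i$ relative to the grading where $d_i^{-1}$ has degree $-1$, to treat a homogeneous element $x\,d_i^{-m}$ with $x\in\tilde{\cO}_q(G/P)_m$. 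For such an element, $\de_i(x\,d_i^{-m})=x\,d_i^{-m}\otimes\pi(d^m)\pi(d)^{-m}=x\,d_i^{-m}\otimes 1$ automatically, so \emph{every} element of $\tilde{\cO}_q(G/P)S_i^{-1}$ of degree zero is coinvariant — this is consistent, and the content is rather that a general (inhomogeneous) element $\sum_m x_m d_i^{-m}$ with $x_m\in\tilde{\cO}_q(G/P)_m$ is coinvariant iff each graded piece is, which follows from linear independence of the distinct group-likes $\pi(d^m)$ as in \eqref{eq2.3} and Lemma \ref{def-di}. One then identifies the degree-zero part of $\tilde{\cO}_q(G/P)S_i^{-1}$ with $\cO_q(U_i)$: since $\tilde{\cO}_q(G/P)$ is generated in degree one by $\{d_k\}_{k\in\cI}$ (Lemma \ref{def-di}), every degree-zero element is a sum of monomials $d_{k_1}\cdots d_{k_m}d_i^{-m}$, which lie in $k_q[d_kd_i^{-1}]_{k\in\cI}$ after moving the $d_i^{-1}$'s past the $d_k$'s using the Ore relations; this is exactly the definition of $\cO_q(U_i)=\cO_M(U_i)$ recalled just above the proposition.

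The subtle point — the main obstacle — is the bookkeeping with the noncommutative localization: one must be careful that writing a general element of $\cO_q(G)S_i^{-1}$ in the normal form $f d_i^{-m}$, and then intersecting the coinvariants with the degree-zero part, genuinely lands inside the graded subalgebra $\tilde{\cO}_q(G/P)S_i^{-1}$ rather than in the larger $\cO_q(G)S_i^{-1}$. This uses the separate Ore hypothesis on $S_i$ in $\cO_q(G/P)$, assumed in the paragraph preceding Proposition \ref{comod1}, which guarantees the localization of the graded subalgebra embeds compatibly; combined with the fact that the coaction $\de_i$ sees only the $\pi(d^n)$-eigenspaces, an element killed (up to $\otimes 1$) by $\de_i$ must have all its components in the $\tilde{\cO}_q(G/P)_n\, d_i^{-n}$ pieces, i.e. be of degree zero in $\tilde{\cO}_q(G/P)S_i^{-1}$. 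Once this normal-form argument is in place, the identification with the subring generated by the $d_jd_i^{-1}$ is the routine computation indicated above, so I would state it briefly and refer forward to the explicit examples in Section \ref{ex-sec} where the Ore relations are made completely concrete.
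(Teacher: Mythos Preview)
Your easy inclusion and your final step (rewriting $d_{j_1}\cdots d_{j_r}d_i^{-r}$ in the generators $d_kd_i^{-1}$ via Ore) match the paper; the paper organizes the latter as an explicit induction on $r$. The overall strategy for the hard inclusion is also the paper's, but your execution tangles what is in fact a one-line argument. The paper proceeds directly: given a coinvariant $z$, pick $r$ with $zd_i^r\in\cO_q(G)$ and compute
\[
\de_i(zd_i^r)=(z\otimes 1)(d_i^r\otimes\pi(d)^r)=zd_i^r\otimes\pi(d)^r,
\]
whence $zd_i^r\in\tilde{\cO}_q(G/P)_r$ by the very definition \eqref{G/P-eq}. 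That single computation is the entire content of your ``subtle point'': no eigenspace decomposition of the ambient $\cO_q(G)$ is invoked, and the separate graded Ore hypothesis on $\tilde{\cO}_q(G/P)$ plays no role at this step.

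Your second paragraph goes astray before reaching this. Having written an element as $fd_i^{-m}$ with $f\in\cO_q(G)$, you cannot ``refine by the decomposition $\tilde{\cO}_q(G/P)=\bigoplus_n\tilde{\cO}_q(G/P)_n$'', since $f$ is not yet known to lie in that subalgebra; the reduction ``it suffices to treat elements of $\tilde{\cO}_q(G/P)S_i^{-1}$'' is precisely the statement to be proved. The ensuing discussion of inhomogeneous sums $\sum_m x_md_i^{-m}$ with $x_m\in\tilde{\cO}_q(G/P)_m$ and linear independence of the group-likes $\pi(d^m)$ is beside the point --- every such summand is already coinvariant, so nothing is being separated. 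Your third paragraph correctly flags the gap, but the patch (``$\de_i$ sees only the $\pi(d^n)$-eigenspaces'') is imprecise and does not substitute for the displayed computation above, which uses only that $z$ is coinvariant and that $\de_i$ is multiplicative.
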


\begin{proof} By our definition of coaction $\de_i$ (see (\ref{comod1-eq}))
$$
\de_i(d_jd_i^{-1})=(d_j \otimes \pi(d))(d_i^{-1} \otimes  \pi(d)^{-1})=
d_jd_i^{-1} \otimes 1~.
$$
We now need to prove that the $d_jd_i^{-1}$ generate the subring of 
coinvariants.
Assume $z \in \cF(U_i)^{\coinv\, \cO_q(P)}\subset \cF(U_i)$. Since
$\cF(U_i):=\cO_q(G)[S_i^{-1}]$, then 
$zd_i^r \in \cO_q(G)$ for a suitable $r$. Notice that:
$$
\de_i(zd_i^r )=(z \otimes 1)(d_i^r \otimes \pi(d)^r)=
zd_i^r \otimes \pi(d)^r~.
$$
Hence $zd_i^r \in \tilde{\cO}_q(G/P)_r$, 
which, by Lemma \ref{def-di}, is generated by the $d_j$'s:
$$
zd_i^r= \sum_{\lambda_{j_i \dots j_r} \in k_q} 
\lambda_{j_i \dots j_r}d_{j_1} \dots d_{j_r}~.
$$
Therefore we have: 
$$
z= \sum_{\lambda_{j_i \dots j_r} \in k_q} 
\lambda_{j_i \dots j_r}d_{j_1} \dots d_{j_r}d_i^{-r}~.
$$
We now proceed by induction on $r$. The case $r=0$ is clear. For generic $r$,
since $d_i$ satisfies the Ore condition:
$$
d_{j_r}d_i^{-(r-1)}=d_i^{-(r-1)}\!\sum_{\mu_{j_rs} \in k_q} \mu_{j_r s} d_s~,
$$
hence:
$$
z= \sum_{\lambda_{j_i \dots j_r} \in k_q} 
\lambda_{j_i \dots j_r}d_{j_1} \dots d_{j_{r-1}}d_i^{-(r-1)}\!\sum_{\mu_{j_rs} \in 
  k_q}\mu_{j_rs}d_{s}d_i^{-1}
~.
$$
By induction  we obtain:
$$
z= \sum_{\nu_{j_i \dots j_r} \in k_q} 
\nu_{j_i \dots j_r}d_{j_1}d_i^{-1} \dots d_{j_{r-1}}d_i^{-1}\!\sum_{\mu_{j_rs} \in 
  k_q}\mu_{j_rs}d_{s}d_i^{-1}
$$
hence our result.
\end{proof}

\medskip
We conclude summarizing the main results we have
obtained.

\begin{theorem}\label{main}
Let $G$ be a semisimple algebraic group and $P$ a parabolic subgroup,
let the quantum group $\cO_q(G)$ and the quantum subgroup $\cO_q(P):=\cO_q(G)/I_q(P)$ be the quantizations 
of the coordinate rings $\cO(G)$ and $\cO(P)$.
Let $d$ be a quantum section (see Definition \ref{qsec}), denote with $\{d_i\}_{i\in \cI}$ a choice of
linearly independent elements in the coproduct $\Delta(d)=\sum_{i\in
  \cI} d^i \otimes d_i$, and assume they generate the homogenous
coordinate ring $\tilde{\cO}_q(G/P)$ (see Lemma \ref{def-di}). Assume furtherly that
$\cO_q(V_i):=\cO_q(G)S_i^{-1}$, $S_i=\{d_i^r, r \in \Z_{\geq 0}\}$ is Ore and that
subsequent localizations do not depend on the order (see (\ref{order-ore})).
Then:
\begin{enumerate}
\item Let $\cO_q(U_i):=
k_q[d_kd_i^{-1}]^{\phantom{M_{L_J}}}_{k\in \cI} \subset  \cO_q(G)S_i^{-1}$. The assignment
$U_i \mapsto \cO_q(U_i)$
defines a sheaf $\cO_M$ on $M=\cup_{i \in \cJ}U_i$, $\cJ\subset \cI$, 
hence $(M,\cO_M)$ is a quantum ringed
space. \\
\phantom{JJ} In particular, for $M=G/P$ (${\cal J}={\cal I}$), 
the sheaf $\cO_{G/P}$ is the projective localization of the
homogeneous coordinate ring $\tilde{\cO}_q(G/P)$.

\item
The assignment:
$U_I \mapsto \cF(U_I):=\cO_q(V_I)$
defines a sheaf $\cF$ of $\cO_q(P)$-comodule algebras on the quantum ringed space
$M=\cup_{i \in \cJ} U_i\subset G/P$.

\item  $\cF^{\coinv\, \cO_q(P)}=\cO_M$, i.e.,  the subsheaf $\cF^{\coinv\,
    \cO_q(P)}: U\to \cF(U)^{\coinv\,
    \cO_q(P)}\subset \cF(U)$ is canonically 
isomorphic to the sheaf $\cO_M$.
\end{enumerate}
If the sheaf $\cF$ is locally cleft (see Definition \ref{hg-shf})
then $\cF$ is a quantum principal bundle. 
\end{theorem}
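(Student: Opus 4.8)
The plan is to verify the three enumerated assertions in order, each of which has essentially already been established in the preceding propositions, and then to observe that the locally cleft hypothesis delivers exactly the remaining axiom of Definition \ref{hg-shf}. For item (1), I would simply invoke Proposition \ref{qringed-prop}, which states that $U_I\mapsto\cO_q(U_I)$ is a $\cB$-sheaf and hence extends to a sheaf $\cO_M$ on $M=\cup_{i\in\cJ}U_i$ by Proposition \ref{sheaf-prop}; the parenthetical remark that $\cO_{G/P}$ is the projective localization of $\tilde{\cO}_q(G/P)$ when $\cJ=\cI$ is the content of the discussion just before Proposition \ref{qringed-prop}, namely that $\cO_q(U_i)$ is the degree-zero part of $\tilde{\cO}_q(G/P)S_i^{-1}$. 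For item (2), I would cite Proposition \ref{shf-def-prop} together with Proposition \ref{comod2}: the assignment $U_I\mapsto\cO_q(V_I)$ is a $\cB$-sheaf of $\cO_q(P)$-comodule algebras, the restriction maps $r_{IJ}$ being the comodule algebra morphisms coming from the iterated Ore extensions, and it extends uniquely to a sheaf $\cF$ on $M$ by Proposition \ref{sheaf-prop}(1).

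For item (3), the key point is Proposition \ref{coinv-prop}, which gives $\cF(U_i)^{\coinv\,\cO_q(P)}=\cO_M(U_i)$ on each basic open of the cover; I would then note that taking coinvariants is compatible with the restriction maps $r_{IJ}$ (since these are $\cO_q(P)$-comodule algebra morphisms), so $U\mapsto\cF(U)^{\coinv\,\cO_q(P)}$ is a subpresheaf of $\cF$ agreeing with $\cO_M$ on the basis $\cB$. Since both $\cO_M$ and the coinvariant subsheaf are sheaves agreeing on a basis of the topology, the uniqueness in Proposition \ref{sheaf-prop}(1) forces the canonical map $\cF^{\coinv\,\cO_q(P)}\to\cO_M$ to be an isomorphism. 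One small check worth spelling out is that $\cF^{\coinv\,\cO_q(P)}$ really is a sheaf and not merely a presheaf: this follows because the coinvariants are defined by the equalizer condition $\de(f)=f\otimes 1$, and equalizers commute with the limits defining the sheaf gluing, so the subpresheaf of coinvariants of a sheaf of comodule algebras is again a sheaf.

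Finally, with (1)--(3) in hand, the structure sheaf $\cO_M$ satisfies $\cO_M(U_i)=\cF(U_i)^{\coinv\,\cO_q(P)}$ for the open cover $\{U_i\}_{i\in\cJ}$, which is condition (1) of Definition \ref{hg-shf}. If moreover $\cF$ is locally cleft, then by definition condition (2) of Definition \ref{hg-shf} holds as well, so $\cF$ is a quantum principal bundle over $(M,\cO_M)$. I do not expect a genuine obstacle here: the theorem is a packaging result, and every ingredient has been proved above. The only place demanding a little care is the passage from the $\cB$-sheaf statements to statements about the full sheaf $\cF$ on $M$ — in particular making sure the coinvariant functor is applied levelwise and that its sheafification is automatic — but this is routine given Proposition \ref{sheaf-prop} and the fact that all restriction morphisms in sight are $\cO_q(P)$-comodule algebra maps. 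The genuinely restrictive hypotheses (existence and order-independence of the iterated Ore localizations, and local cleftness) are assumed, not proved, and are exactly what the examples of the next section are designed to exhibit.
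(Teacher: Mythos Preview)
Your proposal is correct and follows exactly the paper's approach: the paper's proof simply reads ``(1) is Proposition \ref{qringed-prop}. (2) is Proposition \ref{shf-def-prop}. (3) is Proposition \ref{coinv-prop}.'' You invoke precisely these three results, and your additional care about why $\cF^{\coinv\,\cO_q(P)}$ is a sheaf (via the equalizer description of coinvariants) and why agreement on the basis $\cB$ forces the isomorphism in (3) is detail the paper leaves implicit.
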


\begin{proof}
(1) is Proposiiton \ref{qringed-prop}. (2) is Proposition \ref{shf-def-prop}.
(3) is Proposition \ref{coinv-prop}.
\end{proof}

\section{Examples}
\label{ex-sec}

In this section we apply the general theory we have developped and present
quantum principal bundles over quantum projective spaces. 
We hence sharpen the notion of quantum projective space  as quantum 
homogenous space.
In this section the ground field is $k=\mathbb{C}$.

\subsection{Quantum deformations of function algebras}

\label{qdefs-subsec}
We start with an important example of
quantum group and its quantum homogeneous varieties. For
more details see \cite{ma1} and \cite{fi1}.

\begin{definition} \label{qmatrices}
We define the \textit{quantum matrices}  
as the $\C_q$ algebra $\cO_q(M_{n})$:
\begin{equation} \label{qmatrices-eq}
\cO_q(\M_n) \, = \, \C_q \langle a_{ij} \rangle /I_{\M}
\end{equation}
where $i,j=1,\dots n$ and $I_{\M}$ is the ideal of the Manin relations:
\begin{equation}\label{manin}
\begin{array}{c}
a_{ij}a_{kj}=q^{-1}a_{kj}a_{ij} \quad i<k\,, \quad \quad 
a_{ij}a_{kl}=a_{kl}a_{ij}\quad i<k,j>l \quad \!\!\!\mbox{or}\!\!\! \quad i>k,j<l\,,\\[.6em]
a_{ij}a_{il}=q^{-1}a_{il}a_{ij} \quad j<l\,,  \quad \quad 
a_{ij}a_{kl}-a_{kl}a_{ij}=(q^{-1}-q)a_{il}a_{kj} \quad i<k,j<l_{}\,.
\end{array}
\end{equation}
The quantum matrix algebra $\cO_q(\M_{n})$ is a bialgebra, with
comultiplication and counit given by:
$$
\Delta(a_{ij})=\sum_k a_{ik} \otimes a_{kj}, 
\qquad 
\epsi(a_{ij})=\de_{ij}.
$$
We define the \textit{quantum general linear group} to be the algebra
$$
\cO_q(\rGL_n)= \cO_q(\M_n)[\ddet_q^{-1}]
$$
where $\ddet_q$ is the \textit{quantum
  determinant}:
$$
\ddet_q(a_{ij})=\sum_\sigma (-q)^{-\ell(\sigma)}a_{1 \sigma(1)} 
\dots a_{n \sigma(n)}
=\sum_\sigma (-q)^{-\ell(\sigma)}a_{\sigma(1)1} \dots a_{\sigma(n)n}
$$
where $\ell(\sigma)$ is the length of the permutation $\sigma$
(see \cite{pw} for more details on quantum determinants).

We define the \textit{quantum special linear group} to be the algebra
$$
\cO_q(\rSL_n)= \cO_q(M)/(\ddet_q-1)
$$
$\cO_q(\rGL_n)$ and $\cO_q(\rSL_n)$ are Hopf algebras and quantum deformations
respectively of the general linear and the special linear groups.
\end{definition}

\subsection{Quantum principal bundles on 
quantum Projective spaces}

We consider the special case of a maximal parabolic subgroup 
$P$ of $G=\rSL_n(\C)$ of the form:
$$
P=\left\{\begin{pmatrix} p_{11} & p_{12} & \dots & p_{1n} \\
0& p_{22} & \dots & p_{2n} \\
\vdots & & & \vdots\\
0 & p_{n2} & \dots & p_{nn} \end{pmatrix} \right\}\,\subset\, G=
\left\{A=\!\begin{pmatrix} a_{11} & \dots & a_{1n} \\
a_{21} & \dots & a_{2n} \\
\vdots  & & \vdots\\
a_{n1}  & \dots & a_{nn} \end{pmatrix}\!\!, \det(A)=1
\right\}.
$$
In this case $G/P \simeq \bP^{n-1}(\C)$ is the complex projective space,
and $\tilde{\cO}(\bP^{n-1})$ is the corresponding free graded ring with $n$ generators.
Its quantization  $\tilde{\cO}_q(\bP^{n-1})$ is well known
and, for example, it is constructed in detail
in \cite{fi1} (see Theorem 5.4 for $r=1$), see also \cite{dl}. $\tilde{\cO}_q(\bP^{n-1})$  is  the 
subring  of $\cO_q(\rSL_n)$ generated by the
elements $x_{i}=a_{i1}$, $i\in\cI=\{1,...n\}$.
We can immediately give a presentation:
\beq\label{proj-pres}
\tilde{\cO}_q(\bP^{n-1})=\C_q\langle x_1, \dots , x_{n}\rangle/(x_ix_j-q^{-1}x_jx_i, i < j)~. 
\eeq

We reinterpret  this construction  within the present framework, first
showing that $\tilde{\cO}_q(\bP^{n-1})$ is a quantum homogeneous
projective space according to Definition \ref{qhv-def} and then
constructing, along Theorem \ref{main}, an $\cO_q(P)$-principal bundle on
the ringed space obtained via projective localizations of  $\tilde{\cO}_q(\bP^{n-1})$.

\medskip

Let $\cO_q(G)=\cO_q(\rSL_n)$ be the quantum special linear group
of Definition \ref{qmatrices},  and define the quantum parabolic subgroup
\begin{equation}\label{qparSL}
\cO_q(P):= \cO_q(\rSL_n)/I_q(P)~,
\end{equation}
 where $I_q(P)=(a_{\al1})$ is the Hopf ideal generated by
$a_{\al1}$, $\al\in \{2,\ldots n\}$. We use coordinates $p_{ij}$ 
for the images of the generators $a_{ij}$ under $\pi: \cO_q(\rSL_n) \lra \cO_q(P)$. 
We notice (cf. Example \ref{qgrass-ex}) that $d=a_{11} \in \cO_q(\rSL_n)$ is a quantum section, in fact
$$
\Delta_\pi(a_{11})=a_{11} \otimes p_{11}, \qquad p_{11}=\pi(a_{11})~.
$$
Furthermore, from the coproduct
$
\Delta(a_{11})=\sum_{i\in {\cal I}} a_{1i} \otimes a_{i1}
$
we choose the linearly independent elements $d_i$ in
$\Delta(d)=\sum_{i\in\cI}d^i\otimes d_i$, to be $$d_i=a_{i1}~.$$
Hence, by Lemma \ref{def-di}, the  elements $d_i$ span   $\tilde{\cO}_q(\rSL_n/P)_1$, as defined in
\eqref{G/P-eq}.
The quantum homogeneous projective variety $\tilde{\cO}_q(\rSL_n/P)$ is generated in
degree one, cf. Example \ref{qgrass-ex}, and one can see immediately that $\tilde{\cO}_q(\rSL_n/P)$
coincides with $\tilde{\cO}_q(\bP^{n-1})$, as defined in (\ref{proj-pres}).
\medskip

We now structure $\tilde{\cO}_q(\bP^{n-1})$ as a quantum ringed space and
construct a sheaf of locally trivial $\cO_q(P)$-comodule
algebras, i.e., a quantum principal bundle on the quantum projective
space $\tilde{\cO}_q(\bP^{n-1})$, where $\cO_q(P)$ is the quantum parabolic
subgroup of $\cO(\rSL_n)$ defined in \eqref{qparSL}.

Let us consider the two classical open covers of the
topological spaces $\rSL_n(\C)$ and $\bP^{n-1}(\C)$ respectively:
\begin{equation} \label{opencover}
\begin{array}{rl}
\rSL_n(\C) &=\cup_i V_i, \qquad V_i=\{g \in \rSL_n(\C) \, | \, a_{i1}^0(g) \neq 0\}\\ \\
\bP^{n-1}(\C) &=\cup_i U_i, 
\qquad U_i=\{z \in \bP^{n-1}(\C) \, | \, x_{i}^0(z)\neq 0\}
\end{array}
\end{equation}
where $a_{ij}^0$ denote the generators of $\cO(\rSL_n)$ and similarly
$x_i^0$ those of $\tilde\cO(\bP^{n-1})$, $i,j=1, \dots, n$. 
Evidently, $\wp(V_i)=U_i$, $\wp:\rSL_n(\C) \lra \rSL_n(\C)/P=\bP^{n-1}(\C)$.

\begin{lemma} 
The multiplicative set $S_i=\{a_{i1}^k\}^{\phantom{M_J}}_{{k \in \N}} \subset \cO_q(\rSL_n)$ 
satisfies the Ore condition. 
Furthermore, 
$\cO_q(\rSL_n)S_{i_1}^{-1}\dots S_{i_s}^{-1}$,
does not depend on the
order of the Ore extensions.
\end{lemma}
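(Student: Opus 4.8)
The plan is to verify the Ore condition directly from the Manin relations in Definition \ref{qmatrices}, exploiting the fact that conjugation by $a_{i1}$ acts nicely on the generators. First I would check the left Ore condition: given $S_i=\{a_{i1}^k\}$ and an arbitrary $x\in\cO_q(\rSL_n)$, I need $s'\in S_i$ and $x'\in\cO_q(\rSL_n)$ with $x' a_{i1}^k = a_{i1}^m x$ (and the symmetric right version). Since $S_i$ is generated by the single element $a_{i1}$, it suffices to show that for each generator $a_{kl}$ there is an integer $N$ and an element $y$ with $a_{i1}^N a_{kl} = y\, a_{i1}$, i.e.\ that a high enough power of $a_{i1}$ can always be pushed past any fixed element from one side to the other; equivalently, that $a_{i1}$ is a normal element up to lower-order corrections, so that the two-sided ideal generated by powers of $a_{i1}$ behaves well. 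Concretely I would compute the commutation of $a_{i1}$ with each $a_{kl}$ using \eqref{manin}: for $l=1$ one gets $a_{i1}a_{k1}=q^{-1}a_{k1}a_{i1}$ or $q\,a_{k1}a_{i1}$ depending on whether $i<k$ or $i>k$, so $a_{i1}$ $q$-commutes with the first column; for $l>1$ one gets either $a_{i1}a_{kl}=q a_{kl}a_{i1}$ (when $k<i$), $a_{i1}a_{il}=q^{-1}a_{il}a_{i1}$, or the inhomogeneous relation $a_{i1}a_{kl}-a_{kl}a_{i1}=(q^{-1}-q)a_{il}a_{k1}$ when $i<k$. In every case $a_{i1}a_{kl}$ equals a scalar multiple of $a_{kl}a_{i1}$ plus a term already ending in $a_{k1}$, hence in the subspace $\cO_q(\rSL_n)\,a_{i1}$ after one more step; iterating, a sufficiently large power $a_{i1}^N$ absorbs the correction. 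This gives both Ore conditions; I would also invoke that $\cO_q(\rSL_n)$ is Noetherian (being an iterated Ore/skew-polynomial-type algebra modulo a relation), so that $S_i$ being a right and left denominator set suffices and the localization $\cO_q(\rSL_n)S_i^{-1}$ is well defined; I would similarly note the graded Ore condition inside $\tilde\cO_q(\rSL_n/P)=\tilde\cO_q(\bP^{n-1})$ using the presentation \eqref{proj-pres}, where $x_i x_j = q^{\pm1} x_j x_i$, so $S_i$ is literally a set of $q$-normal elements and the condition is immediate.

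For the order-independence of iterated localizations, the plan is to show that the multiplicative sets $S_i$ and $S_j$ commute in the appropriate sense: $S_i$ remains an Ore set in $\cO_q(\rSL_n)S_j^{-1}$, and the resulting ring $\cO_q(\rSL_n)S_i^{-1}S_j^{-1}$ satisfies the universal property of inverting $S_i\cup S_j$, which is symmetric in $i$ and $j$. The key point is that inverting $a_{j1}$ does not destroy the Ore property of $S_i$: the same commutation computations above, now performed in the localized ring, show that $a_{i1}$ still $q$-commutes with $a_{j1}^{-1}$ (up to the explicit corrections), so the argument goes through verbatim. Then by the universal property, both $\cO_q(\rSL_n)S_i^{-1}S_j^{-1}$ and $\cO_q(\rSL_n)S_j^{-1}S_i^{-1}$ are initial among algebra maps from $\cO_q(\rSL_n)$ sending all elements of $S_i\cup S_j$ to units, hence canonically isomorphic; the general case of $S_{i_1}^{-1}\cdots S_{i_s}^{-1}$ follows by induction on $s$.

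The main obstacle is the inhomogeneous Manin relation $a_{i1}a_{kl}-a_{kl}a_{i1}=(q^{-1}-q)a_{il}a_{k1}$ for $i<k$, $1<l$: unlike the pure $q$-commutation relations, $a_{i1}$ is not a normal element on the nose, so one cannot simply say "$S_i$ consists of normal elements.'' I would handle this by a careful induction on the $a_{i1}$-adic order, showing that the correction term $a_{il}a_{k1}$, which already contains $a_{k1}$, gets pushed into $\cO_q(\rSL_n)\,a_{i1}$ after one further application of the relations (now relating $a_{k1}$ to $a_{i1}$, which is pure $q$-commutation), so that after finitely many steps a power $a_{i1}^N x$ lies in $x'\,a_{i1}^{N}$-type form with controlled $x'$; making the bookkeeping precise — tracking how the order of the correction strictly increases at each step, and combining with Noetherianity to guarantee termination — is the technical heart of the proof. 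Everything else is a routine check against the list \eqref{manin}.
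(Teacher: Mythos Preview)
Your direct approach via the Manin relations is viable and, once the bookkeeping is tightened, yields a correct proof; but the paper takes a much shorter route. It simply cites \v{S}koda's theorem that every quantum minor in $\cO_q(M_n)$ generates an Ore set (references \cite{sk1} and \cite{sk2} in the bibliography), and observes that each $a_{i1}$ is a $1\times 1$ quantum minor. For order-independence, the paper notes that any two first-column entries $a_{i1}$, $a_{j1}$ $q$-commute, so the multiplicative set they jointly generate is again Ore and the iterated localizations coincide; your universal-property argument reaches the same conclusion with a bit more scaffolding. What the paper's route buys is brevity and generality (the cited result handles all quantum minors, not only the $1\times 1$ case); what yours buys is self-containment.

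One point in your sketch needs sharpening. In the inhomogeneous case $a_{i1}a_{kl}=a_{kl}a_{i1}+(q^{-1}-q)a_{il}a_{k1}$ for $i<k$, $l>1$, the correction term ending in $a_{k1}$ does \emph{not} by itself lie in $\cO_q(\rSL_n)\,a_{i1}$, and $q$-commutation of $a_{k1}$ with $a_{i1}$ does not convert one into the other, so the sentence ``relating $a_{k1}$ to $a_{i1}$'' is misleading as written. The correct observation is that $a_{i1}$ $q$-commutes with \emph{both} factors of the correction (same row as $a_{il}$, same column as $a_{k1}$), so one further left multiplication by $a_{i1}$ gives $a_{i1}\cdot a_{il}a_{k1}=q^{-2}a_{il}a_{k1}\,a_{i1}$, and hence already $a_{i1}^{2}a_{kl}\in \cO_q(\rSL_n)\,a_{i1}$. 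With this in hand the induction on monomial length is straightforward and terminates after finitely many steps; no appeal to Noetherianity or an $a_{i1}$-adic filtration is needed.
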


\begin{proof} See \cite[pp. 4 and 5]{sk2}. Notice
that $a_{i1}$ is a quantum minor of order $1$ and two such minors
$q$-commute, hence their product forms an Ore set.
\end{proof}

 As a corollary of Theorem 4.8 we then immediately obtain
\begin{proposition} \label{shf-def-prop2}
Let the notation be as in the previous section.
The assignment:
$$
U_I \longmapsto \cF(U_I):=\cO_q(V_I):=\cO_q(\rSL_n)S_{i_1}^{-1}\dots S_{i_s}^{-1},
\qquad I=\{i_1, \dots, i_s\}
$$
defines a sheaf of $\cO_q(P)$-comodule algebras on $\rSL_n(\C)/P$.
Furthermore, $\cF(U_i)^{\coinv \,\cO_q(P)}$ is generated by
$a_{i1}a_{11}^{-1}$, $\cF^{\coinv \,\cO_q(P)}$ equals the projective
localization of $\tilde{\cO}_q(\bP^{n-1})$ and $(\rSL_n(\C)/P, \cF^{\coinv \, \cO_q(P)})$ is a quantum ringed space.
\end{proposition}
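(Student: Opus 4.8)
The plan is to invoke Theorem \ref{main} directly, verifying that its hypotheses are met in this special case, and then to compute the two explicit assertions (the generator of the coinvariants and the identification of the coinvariant sheaf with the projective localization of $\tilde{\cO}_q(\bP^{n-1})$). First I would note that all the structural hypotheses of Theorem \ref{main} have already been checked above: $\cO_q(\rSL_n)$ and $\cO_q(P)$ are the required quantizations, $d=a_{11}$ is a quantum section with $\Delta_\pi(a_{11})=a_{11}\otimes p_{11}$, the linearly independent elements in $\Delta(d)=\sum_i a_{1i}\otimes a_{i1}$ are $d_i=a_{i1}$, these generate $\tilde{\cO}_q(\rSL_n/P)=\tilde{\cO}_q(\bP^{n-1})$ by Lemma \ref{def-di} together with the presentation \eqref{proj-pres}, and the preceding Lemma establishes that each $S_i=\{a_{i1}^k\}_{k\in\N}$ is Ore in $\cO_q(\rSL_n)$ with iterated localizations independent of the order. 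Hence parts (1), (2), (3) of Theorem \ref{main} apply verbatim, yielding the sheaf $\cF$ of $\cO_q(P)$-comodule algebras on $\rSL_n(\C)/P$, the quantum ringed space structure on $M=\rSL_n(\C)/P$ via $\cO_M=\cO_{G/P}$, and the isomorphism $\cF^{\coinv\,\cO_q(P)}\simeq\cO_M$.

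Next I would spell out the identification of $\cF(U_i)^{\coinv\,\cO_q(P)}$. By Proposition \ref{coinv-prop} this subalgebra is generated by the elements $d_jd_i^{-1}=a_{j1}a_{i1}^{-1}$; in particular, for $i$ fixed and running $j\in\cI$, it is the subalgebra $\cO_q(U_i)=k_q[a_{k1}a_{i1}^{-1}]_{k\in\cI}\subset\cO_q(\rSL_n)S_i^{-1}$. Since the quantum section here is $d=a_{11}$, the generator $a_{j1}a_{i1}^{-1}$ specializes in the statement to $a_{i1}a_{11}^{-1}$ in the chart $U_1$; more precisely one records that $\cF(U_i)^{\coinv\,\cO_q(P)}$ is generated by the $a_{j1}a_{i1}^{-1}$, which for the $i=1$ chart are the $a_{i1}a_{11}^{-1}$ appearing in the presentation \eqref{proj-pres} of $\tilde{\cO}_q(\bP^{n-1})$ after passing to degree zero. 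Then, by the second part of Theorem \ref{main}(1), for $M=\rSL_n(\C)/P$ (the case $\cJ=\cI$) the sheaf $\cO_{G/P}$ \emph{is} by definition the projective localization of the homogeneous coordinate ring $\tilde{\cO}_q(\bP^{n-1})$, i.e. the sheaf whose sections over $U_i$ are the degree-zero part of $\tilde{\cO}_q(\bP^{n-1})S_i^{-1}$; combined with part (3) this gives $\cF^{\coinv\,\cO_q(P)}$ equals the projective localization of $\tilde{\cO}_q(\bP^{n-1})$, and $(\rSL_n(\C)/P,\cF^{\coinv\,\cO_q(P)})$ is a quantum ringed space.

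The only point requiring a small argument beyond citation is the coincidence, noted above, between the abstract $\tilde{\cO}_q(\rSL_n/P)$ produced by Definition \eqref{G/P-eq} and the concretely presented algebra $\tilde{\cO}_q(\bP^{n-1})$ of \eqref{proj-pres}; but this has already been asserted in the running text ("one can see immediately that $\tilde{\cO}_q(\rSL_n/P)$ coincides with $\tilde{\cO}_q(\bP^{n-1})$"), so in the proof I would simply refer back to it, or at most observe that the $d_i=a_{i1}$ satisfy precisely the relations $x_ix_j=q^{-1}x_jx_i$ for $i<j$ coming from the Manin relations \eqref{manin}, and that by Lemma \ref{def-di} they freely generate (subject only to these) the degree-one component and hence the whole graded algebra. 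I do not expect any genuine obstacle here: the statement is explicitly flagged in the text as a corollary of Theorem \ref{main}, and the proof is essentially bookkeeping — the substantive work (the Ore property and order-independence) having been discharged in the preceding Lemma, and the sheaf-theoretic machinery in Section \ref{QPB}. If there is a delicate step, it is making sure the "projective localization" description of $\cO_{G/P}(U_i)$ — degree-zero elements of $\tilde{\cO}_q(\bP^{n-1})S_i^{-1}$ — genuinely agrees with $k_q[a_{j1}a_{i1}^{-1}]_{j\in\cI}$, which uses the graded Ore hypothesis exactly as in the paragraph preceding Proposition \ref{qringed-prop}.
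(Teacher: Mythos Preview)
Your proposal is correct and matches the paper's approach exactly: the paper introduces this proposition with the phrase ``As a corollary of Theorem 4.8 we then immediately obtain'' and gives no further proof, so your plan of verifying the hypotheses of Theorem \ref{main} (quantum section $d=a_{11}$, generators $d_i=a_{i1}$, Ore condition and order-independence from the preceding Lemma) and then reading off parts (1)--(3) is precisely what is intended. The additional detail you supply --- identifying the coinvariant generators via Proposition \ref{coinv-prop} and matching $\tilde{\cO}_q(\rSL_n/P)$ with the presentation \eqref{proj-pres} --- is a helpful expansion of what the paper leaves implicit.
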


We now show that $\cF$ is a quantum principal
bundle on the quantum ringed space $(\rSL_n(\C)/P, \cF^{\coinv \,
  \cO_q(P)})$.
The only property to be checked is the locally cleft condition
(cf. Definition \ref{cleft-def}). We actually show the stronger local
triviality condition, i.e., the collection of maps $j_i:
\cO_q(P)\to \cF(U_i)$ are  
$\cO_q(P)$-comodule algebra maps, hence, in particular, are cleaving
maps  (cf. Remark \ref{cleft-rem}).

\medskip
We first study the map $j_1$.
Let $a_{ij}\in \cF(U_1):=\cO_q(\rSL_n)S_{1}^{-1}= \cO_q(\rSL_n)[a^{-1}_{11}]$, ${i,j=1,...n}$; since $a_{11}$ is invertible we have the matrix factorization 
\begin{equation}\label{factor}
(a_{ij})=\begin{pmatrix} 1 &0\\a_{\alpha 1}a^{-1}_{11}& 1\!\!1\end{pmatrix}\!\!
\begin{pmatrix} a_{11} & a_{1\beta}~\\ 0& a_{\alpha \beta }-a_{\alpha 1}a^{-1}_{11}a_{1\beta}\end{pmatrix}
=\begin{pmatrix} 1 &0\\a_{\alpha 1}a^{-1}_{11}& 1\!\!1\end{pmatrix}\!\!
\begin{pmatrix} a_{11} & a_{1\beta}\\ 0& a_{11}^{-1}
  D^{1\beta}_{1\alpha}\end{pmatrix}_{_{}}
\end{equation}
where 
$\alpha, \beta=2,\ldots n$, and
$D_{ij}^{kl}=a_{ik}a_{jl}-q^{-1}a_{il}a_{jk}$, with $i<j$ and $k<l$, denotes the quantum determinant of the $2\times 2$
quantum matrix obtained by taking rows $i,j$ and columns $k,l$.

In the commutative case this factorization corresponds to the
trivialization $V_1\simeq \C^{n-1}\times P$ of the
open $V_1$ of the total space of  $\rSL_n(\C)\rightarrow
\rSL_n(\C)/P$ 
(cf. eq. \eqref{opencover}).  In the quantum case we similarly
have that $\cF(U_1)^{\coinv \,\cO_q(P)}\subset \cF(U_1)$ is a trivial
Hopf-Galois extension. Recalling
Remark \ref{cleft-rem} and Observation \ref{obsjtheta}, we shall see it is the smashed
product $$\cF(U_1)=\C_q[a_{\alpha 1} a_{11}^{-1}]_{\al=2,...n}\,\#\,
\cO_q(P)~,$$ where the generators $\Big(\small{\begin{matrix} p_{11} &
    \!\!p_{1\beta}\\ 0& \!\! 
  p_{\alpha\beta}\end{matrix}_{_{}}}\Big)$ of $\cO_q(P)$ are 
identified with $\Big(\small{\begin{matrix} a_{11} &
    \!\!a_{1\beta}\\ 0& \!\! a_{11}^{-1}
  D^{1\beta}_{1\alpha}\end{matrix}_{_{}}}\Big)$.

The properties of $j_1: \cO_q(P)\to \cF(U_1)$ follow from the
properties of an associated lift $J_1$ that maps into the localization
$\cO_q(\M_n)[a_{11}^{-1}]$ of the quantum matrix algebra defined in \eqref{qmatrices-eq}.

\begin{lemma}\label{lemmaqpb1}
Let $\cO_q(p_{ij})$ denote the quantum matrix algebra
with generators $p_{ij}=p_{11}, p_{1\beta}, p_{\al\beta}$ and $p_{\al 1}=0$; $\al,\beta=2,\dots,n$.
We have a well defined algebra map $J_1:\cO_q(p_{ij}) \lra \cO_q(\M_n)[a_{11}^{-1}]$,
that on the generators reads
$$
\begin{array}{c}
J_1(p_{11}^{\pm 1})=a_{11}^{\pm 1}\;, \qquad J_1(p_{1\beta})=a_{1\beta}\;, 
\quad J_1(p_{\alpha\beta})=a_{11}^{-1} D^{1\beta}_{1\alpha}\;\,. 
\end{array}
$$
\end{lemma}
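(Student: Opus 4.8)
The plan is to check that the prescribed images satisfy the defining relations of $\cO_q(p_{ij})$, after which $J_1$ extends from the generators to a well-defined algebra homomorphism. The algebra $\cO_q(p_{ij})$ is the quotient of $\cO_q(\M_n)$ by the two-sided ideal generated by the $a_{\alpha 1}$, $\alpha=2,\dots,n$, with $p_{11}$ moreover inverted; so its defining relations are exactly the Manin relations \eqref{manin} among $p_{11}^{\pm 1}$, $p_{1\beta}$ and $p_{\alpha\beta}$ (those originally involving $p_{\alpha 1}$ collapsing to $0=0$, which $J_1$ respects since $J_1(p_{\alpha 1})=0$), together with $p_{11}p_{11}^{-1}=p_{11}^{-1}p_{11}=1$. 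The relations involving only $p_{11}^{\pm 1}$ and the $p_{1\beta}$ hold automatically, since $a_{11}^{\pm 1}$ and the $a_{1\beta}$ already obey the top-row relations inside $\cO_q(\M_n)[a_{11}^{-1}]$.

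The content is thus concentrated in two families. First, the \emph{mixed} relations tying $p_{11}^{\pm 1},p_{1\beta}$ to the block generators $p_{\alpha\gamma}$: under $J_1$ these become $q$-commutation statements between $a_{11}^{\pm 1}$, $a_{1\beta}$ and the Schur-complement entries $c_{\alpha\gamma}:=a_{11}^{-1}D^{1\gamma}_{1\alpha}$. Second, the \emph{internal} relations among the $p_{\alpha\beta}$, which under $J_1$ amount to the assertion that the $c_{\alpha\beta}$ themselves obey the Manin relations of $\cO_q(\M_{n-1})$. Both are treated with the same ingredients: the relations \eqref{manin} in $\cO_q(\M_n)$; the normalisation rules $a_{11}^{-1}a_{1\beta}=q\,a_{1\beta}a_{11}^{-1}$, $a_{11}^{-1}a_{\alpha 1}=q\,a_{\alpha 1}a_{11}^{-1}$ and $a_{11}^{-1}a_{\alpha\beta}=a_{\alpha\beta}a_{11}^{-1}-(q^{-1}-q)\,a_{11}^{-1}a_{1\beta}a_{\alpha 1}a_{11}^{-1}$, obtained by conjugating \eqref{manin} past the invertible $a_{11}$; and the identity $c_{\alpha\beta}=a_{\alpha\beta}-a_{\alpha 1}a_{11}^{-1}a_{1\beta}=a_{11}^{-1}D^{1\beta}_{1\alpha}$ already recorded in \eqref{factor}. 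A convenient way to organise the internal family is first to show that $a_{11}^{-1}$ commutes with every $2\times 2$ quantum minor $D^{1\beta}_{1\alpha}$ (a short rewriting using \eqref{manin}); this reduces the quadratic relations among the $c_{\alpha\beta}$ to the known relations among the minors $D^{1\beta}_{1\alpha}$, cf. \cite{pw}. Alternatively one verifies the internal family head-on as a finite sequence of rewritings.

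The main obstacle is the internal family: it is the longest computation and the one most sensitive to ordering, since one must transport several factors of $a_{11}^{-1}$ and $a_{\alpha 1}$ through products of three or four generators while bookkeeping the Manin correction terms. A useful sanity check of the kind of identity at stake is $D^{1\beta}_{1\alpha}\,a_{11}=a_{11}\,D^{1\beta}_{1\alpha}$, which is precisely what the relation $p_{11}p_{\alpha\beta}=p_{\alpha\beta}p_{11}$ in $\cO_q(p_{ij})$ forces on $J_1$. Conceptually, the whole lemma is the quantum incarnation of the fact that block Gaussian elimination sends a quantum matrix to a quantum matrix: by \eqref{factor} the target matrix $\bigl(\begin{smallmatrix} a_{11}&a_{1\beta}\\ 0&c_{\alpha\beta}\end{smallmatrix}\bigr)$ equals $L^{-1}(a_{ij})$ with $L=\bigl(\begin{smallmatrix} 1&0\\ a_{\alpha 1}a_{11}^{-1}& 1\!\!1\end{smallmatrix}\bigr)$ lower unitriangular, and once this block-upper-triangular matrix is recognised as a quantum matrix over $\cO_q(\M_n)[a_{11}^{-1}]$, reading off its entries yields exactly the relations needed for $J_1$ to descend. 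No injectivity or surjectivity is claimed, so well-definedness completes the proof.
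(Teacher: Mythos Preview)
Your proposal is correct and follows essentially the same route as the paper: verify the Manin relations on the images by (i) showing $a_{11}^{\pm 1}$ commutes with each minor $D^{1\beta}_{1\alpha}$, so the internal relations among the $c_{\alpha\beta}$ reduce to the Manin relations among the minors themselves, and (ii) checking the mixed relations between $a_{1\gamma}$ and $a_{11}^{-1}D^{1\beta}_{1\alpha}$ case by case on $\gamma\lessgtr\beta$. The paper invokes \cite[Theorem~7.3]{fi3} (rather than \cite{pw}) for the fact that the $D^{1\beta}_{1\alpha}$ obey the Manin relations, and writes out the three mixed cases explicitly (leaving the last as an exercise), but the structure is the same as yours.
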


\begin{proof}
  Recall, from \cite{fi3}, the following commutation relations in
  $\cO_q(\M_n)$ among  quantum determinants and generators of the algebra
  of quantum matrices:
$$
\begin{array}{c}
a_{1\beta}D_{1\al}^{1\beta}=D_{1\al}^{1\beta}a_{1\beta},\quad
a_{1\ga}D_{1\al}^{1\beta}=qD_{1\al}^{1\beta}a_{1\ga}, \quad \ga>\beta \\ \\
a_{1\ga}D_{1\al}^{1\beta}=qD_{1\al}^{1\beta}a_{1\ga}+q(q^{-1}-q)D^{1\ga}_{1\beta}a_{1\al}, \quad \ga<\beta 
\end{array}
$$
where $D_{1\al}^{1\beta}=a_{11}a_{\al\beta}-q^{-1}a_{1\beta}a_{\al1}$.
Also, by  Theorem 7.3 in \cite{fi3}, the indeterminates $u_{\al\beta}:=
D_{1\al}^{1\beta}$
satisfy the Manin relations as in Definition \ref{qmatrices},
where we replace $a_{\al\beta}$ with $u_{\al\beta}$.
In order to show that $J_1$ is an algebra map, we  have to show it
is well defined.  First, we easily compute the
commutation relations $$a_{11}^{\pm 1}
D^{1\beta}_{1\alpha}=D^{1\beta}_{1\alpha}a_{11}^{\pm 1}~,$$
that imply that
the $a_{11}^{-1} D^{1\beta}_{1\alpha}$'s satisfy the Manin relations
among themselves.
Next, we need to check that the commutation relations between
$a_{1\gamma}$, $\gamma=2,\ldots n$, and $a_{11}^{-1} D^{1\beta}_{1\alpha}$
are of the Manin
kind. 

If $\gamma >\beta$, we have:
$$
a_{1\gamma\,} ~ a_{11}^{-1} \!D^{1\beta}_{1\alpha}\,=
\,a_{11}^{-1} \!D^{1\beta}_{1\alpha\,} ~ a_{1j}
$$
because $a_{1\gamma}a_{11}^{-1}=
q^{-1}a_{11}^{-1}a_{1\gamma}$ and $a_{1\gamma}D^{1\beta}_{1\alpha}=
qD^{1\beta}_{1\alpha}a_{1\gamma}$.

If $\gamma=\beta$, we have:
$$
a_{1\beta\,} ~ a_{11}^{-1} \! D^{1\beta}_{1\alpha}=
q^{-1} a_{11}^{-1} \!D^{1\beta}_{1\alpha\,}~
a_{1\beta}
$$
because $a_{1\beta}$ and $D^{1\beta}_{1\alpha}$ commute. 

If $\gamma<\beta$, we need to check the commutation:
\beq
\begin{array}{rl}
  a_{1\gamma} ~ a_{11}^{-1}\!  D^{1\beta}_{1\alpha}~=~a_{11}^{-1}\!
  D^{1\beta}_{1\alpha}~ a_{1\gamma}+
(q^{-1}-q)a_{1\beta}~a_{11}^{-1}\! D^{1\gamma}_{1\alpha}~.
\end{array}
\eeq
We leave this calculation as an exercise.
\end{proof}

\begin{lemma}\label{lemmaqpb1bis}
  Let the notation be as above. Let
  $\ddet_q(p_{ij})$ and $\ddet_q(a_{ij})$ denote
  respectively the quantum determinants in $\cO_q(p_{ij})$
  and $\cO_q(\M_n)[a_{11}^{-1}]$.
  Then
  $$
  J_1(\ddet_q(p_{ij}))=\ddet_q(a_{ij})~.
  $$
\end{lemma}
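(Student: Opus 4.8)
The plan is to observe that the algebra map $J_1$ of Lemma \ref{lemmaqpb1}, applied entrywise, carries the matrix $(p_{ij})$ exactly to the upper block--triangular matrix that appears as the second factor on the right-hand side of \eqref{factor}: indeed $J_1(p_{11})=a_{11}$, $J_1(p_{1\beta})=a_{1\beta}$, $J_1(p_{\al1})=J_1(0)=0$ and $J_1(p_{\al\beta})=a_{11}^{-1}D^{1\beta}_{1\al}$ for $\al,\beta=2,\dots,n$. Since $\ddet_q(p_{ij})$ is, by definition, an explicit noncommutative polynomial in the generators $p_{ij}$ and $J_1$ is an algebra homomorphism, it follows at once that $J_1\big(\ddet_q(p_{ij})\big)=\ddet_q(U)$, where $U$ denotes that upper block--triangular factor of \eqref{factor}. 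Thus the assertion reduces to the single identity $\ddet_q(U)=\ddet_q(a_{ij})$ in $\cO_q(\M_n)[a_{11}^{-1}]$.

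To establish $\ddet_q(U)=\ddet_q(a_{ij})$ I would argue in two steps. First, a quantum Laplace expansion of $\ddet_q(U)$ along its first column — whose only nonzero entry is $a_{11}$ — gives $\ddet_q(U)=a_{11}\,\ddet_q(M)$ with $M=(a_{11}^{-1}D^{1\beta}_{1\al})_{\al,\beta=2}^{n}$; then, using that $a_{11}^{\pm1}$ commutes with every $D^{1\beta}_{1\al}$ and that the $D^{1\beta}_{1\al}$ themselves satisfy the Manin relations (both facts recorded in the proof of Lemma \ref{lemmaqpb1}, the latter resting on \cite{fi3}), one pulls the central factor $a_{11}^{-1}$ out of the $(n-1)\times(n-1)$ quantum determinant to get $\ddet_q(U)=a_{11}\cdot a_{11}^{-(n-1)}\,\ddet_q\big((D^{1\beta}_{1\al})_{\al,\beta=2}^{n}\big)=a_{11}^{\,2-n}\,\ddet_q\big((D^{1\beta}_{1\al})_{\al,\beta=2}^{n}\big)$. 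Second, one invokes the quantum Sylvester identity $\ddet_q\big((D^{1\beta}_{1\al})_{\al,\beta=2}^{n}\big)=a_{11}^{\,n-2}\,\ddet_q(a_{ij})$; substituting, the exponents of $a_{11}$ sum to $(2-n)+(n-2)=0$ and cancel, leaving $\ddet_q(U)=\ddet_q(a_{ij})$ as wanted.

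I expect this quantum Sylvester identity to be the only real obstacle. Conceptually it is just the statement that in \eqref{factor} one has $(a_{ij})=L\cdot U$ with $L$ the unipotent lower--triangular factor (so $\ddet_q(L)=1$), together with multiplicativity of the quantum determinant on this particular product; but since the general multiplicativity $\ddet_q(AB)=\ddet_q(A)\,\ddet_q(B)$ requires the entries of $A$ and those of $B$ to commute — which fails here — this needs an honest proof. I would either cite the quantum minor identities available in \cite{fi3} and \cite{pw}, or prove the Sylvester identity by induction on $n$, expanding $\ddet_q\big((D^{1\beta}_{1\al})\big)$ via a quantum Laplace expansion and repeatedly using the commutation relations between the $a_{1\ga}$ and the $D^{1\beta}_{1\al}$ listed in the proof of Lemma \ref{lemmaqpb1} to peel off one row and column at a time; this is routine but lengthy bookkeeping best relegated to a reference.
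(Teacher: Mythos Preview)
Your reduction is correct and matches the paper's up to the point where the problem becomes showing $\ddet_q(U)=\ddet_q(a_{ij})$ for the upper factor $U=(c_{ij})$ of \eqref{factor}; the paper also makes exactly this reduction. Where you diverge is in how that identity is established. You Laplace--expand $\ddet_q(U)$ along its first column, factor out the central $a_{11}^{-1}$'s, and invoke the quantum Sylvester identity $\ddet_q\big((D^{1\beta}_{1\al})\big)=a_{11}^{\,n-2}\ddet_q(a_{ij})$. The paper instead Laplace--expands $\ddet_q(a_{ij})$ along its first column, substitutes $a_{ij}=\sum_k b_{ik}c_{kj}$ from the factorization \eqref{factor}, works out explicit commutation relations between the $b_{ij}$ and the $c_{kl}$, and shows by direct bookkeeping that all but one term cancel, leaving $\ddet_q(a_{ij})=c_{11}\,\ddet_q(c_{\al\beta})_{\al,\beta\ge 2}=\ddet_q(c_{ij})$.

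Your route is cleaner conceptually and makes explicit that the heart of the matter is a known identity; the trade--off is that the quantum Sylvester identity is not in \cite{pw} or \cite{fi3} in the form you need, so you would have to import an additional reference (e.g.\ Krob--Leclerc) or carry out the inductive proof you sketch---which, once unwound, is comparable in length to the paper's computation. The paper's approach is self--contained within the references already in play, at the cost of a longer direct calculation. Either is acceptable; if you keep your version, state precisely where the Sylvester identity is proved rather than leaving it as ``routine bookkeeping''.
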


\begin{proof}
In the factorization (\ref{factor}), define:
$$
(b_{ij}):= \begin{pmatrix} 1 &0\\a_{\al 1}a^{-1}_{11}& 1\!\!1\end{pmatrix}\qquad
(c_{ij}):=\begin{pmatrix} a_{11} & a_{1\beta}\\ 0& a_{11}^{-1}
D^{1\beta}_{1\al}\end{pmatrix}_{_{}} 
$$
for $i,j=1,\dots, n$, $\al,\beta=2,\dots, n$.
Since $c_{ij}=J_1(p_{ij})$, by Lemma \ref{lemmaqpb1}, they
form a quantum matrix and
our claim amounts to  $\ddet_q(a_{ij})=\ddet_q(c_{ij})$.

\medskip
We start by noticing that $b_{ij}$ and $c_{kl}$ satisfy
the following commutation relations:
\beq\label{comm-rel-bc}
\begin{array}{c}
b_{ij}c_{kl}=c_{kl}b_{ij}, \quad j \neq 1,
\qquad b_{11}c_{kl}=c_{kl}b_{11},  \qquad b_{i1}c_{il}=q^{-1}c_{il}b_{i1},
\quad i>1\\ \\
 b_{i1}c_{kl}=c_{kl}b_{i1}, \quad k<i, \qquad b_{i1}c_{11}=qc_{11}b_{i1} \\ \\
b_{i1}c_{kl}=c_{kl}b_{i1} + (q^{-1}-q)c_{il}b_{k1}, \quad k>i~.
\end{array}
\eeq
We also notice the obvious facts:
\beq \label{obvious}
b_{ii}=1, \qquad b_{ij}=0, \quad i \neq j, \quad j \neq 1~.
\eeq

We proceed with a direct calculation of $\ddet_q(a_{ij})$
using $a_{ij}=\sum_k b_{ik}c_{kj}$. Recall the quantum Laplace expansion
along the first column (see \cite{pw} pg 47):
$$
\ddet_q(a_{ij})=\sum_r (-q)^{-r+1} a_{r1} A(r,1)
$$
where $A(r,1)$ is the quantum determinant obtained
from $(a_{ij})$ by removing the $r$-th row and first column,
\beq\label{expr}
\begin{array}{l}
\ddet_q(a_{ij})= a_{11} \sum_\sigma (-q)^{-\ell(\sigma)}
                 a_{2\sigma(2)} \dots  a_{n\sigma(n)}\\ \\
              \qquad \qquad ~~~+\sum_{t=2}^n (-q)^{1-t} a_{t1}\sum_{\sigma_t} (-q)^{-\ell(\sigma_t)}
a_{1\sigma_t(1)} \dots \widehat{a_{t\sigma_t(t)}} \dots a_{n\sigma(n)} \\ \\
\qquad  \qquad =c_{11} \sum_\sigma (-q)^{-\ell(\sigma)}
b_{2k_2}c_{k_2\sigma(2)} \dots  b_{nk_n}c_{k_n\sigma(n)} \\ \\
\qquad \qquad ~~~+ \sum_{t=2}^n (-q)^{1-t} b_{t1}c_{11}^{-1}
\sum_{\sigma_t,k_1 \dots \widehat{k_t},
\dots k_n}(-q)^{-\ell(\sigma_t)} \\ \\
\qquad \qquad \qquad ~~~~~b_{1k_1}c_{k_1\sigma(1)} \dots \widehat{b_{1k_t}c_{k_t\sigma(t)}} \dots
b_{nk_n}c_{k_n\sigma(n)}~,
\end{array}
\eeq
where $\widehat{u}$ means that we omit the term $u$.

Notice that $\sigma_t:\{1,\dots,\widehat{t},\dots, n\} \lra 
\{2,\dots, n\}$, but we treat it as a permutation, just renaming
the elements of the two sets as the first $n-1$ natural numbers, so
that $\ell(\sigma_t)$ is well defined.

Let us look at the term 
$b_{2k_2}c_{k_2\sigma(2)} b_{3k_3}c_{k_3\sigma(3)} \dots  b_{nk_n}c_{k_n\sigma(n)}$,
where $k_2,\dots,k_n=1,\dots,n$. We want to reorder it, and we claim that:
$$
b_{2k_2}c_{k_2\sigma(2)} 
b_{3k_3}c_{k_3\sigma(3)} \dots  b_{nk_n}c_{k_n\sigma(n)}=
b_{2k_2}b_{3k_3} \dots  b_{nk_n}c_{k_2\sigma(2)} c_{k_3\sigma(3)}
c_{k_n\sigma(n)}~.
$$
By (\ref{obvious}) $b_{2k_2}\neq 0$ if and only if $k_2=1,2$.
So we have to reorder $c_{k_2\sigma(2)} b_{3k_3}$ only for  $k_2<3$, hence, by 
(\ref{comm-rel-bc}), we have that they commute. The rest follows 
by repeated application of this argument.
 
Therefore, we can write the first term in (\ref{expr}) as:
\beq\label{term1}
\begin{array}{l}
\!\! \!\!\!a_{11} \sum_\sigma (-q)^{-\ell(\sigma)}
                 a_{2\sigma(2)} \dots  a_{n\sigma(n)}=\\ \\
\qquad~~~=c_{11} 
\sum_{\sigma,k_2,\dots k_n} (-q)^{-\ell(\sigma)}
b_{2k_2}c_{k_2\sigma(2)} \dots  b_{nk_n}c_{k_n\sigma(n)}\\ \\
\qquad~~~ =c_{11} \sum_{\sigma,k_2,\dots,k_n} (-q)^{-\ell(\sigma)}
b_{2k_2}b_{3k_3} \dots  b_{nk_n}c_{k_2\sigma(2)} \dots c_{k_n\sigma(n)} \\ \\
\qquad~~~ =c_{11} 
\sum_{k_2,\dots,k_n} 
b_{2k_2}b_{3k_3} \dots  b_{nk_n}
  \sum_\sigma (-q)^{-\ell(\sigma)} c_{k_2\sigma(2)} \dots c_{k_n\sigma(n)} \\ \\
\qquad~~~ =c_{11} 
\sum_{k_2,\dots,k_n} 
b_{2k_2}b_{3k_3} \dots  b_{nk_n}C[k_2,\dots,k_n|2,\dots,n]
\end{array}
\eeq
where $ C[k_2,\dots,k_n|2,\dots,n]$ is the quantum determinant
in the indeterminates $c_{ij}$ obtained by taking
rows $(k_2,\dots,k_n)$ (in this order) and columns $(2,\dots,n)$.
Notice that, by (\ref{obvious}), the sum over the index  $k_t$ runs
only on the values $k_t=1$ and $t$. If $k_u=k_v=1$ for some
$u,v=2,\dots,n$, then by Corollary 4.4.4 in \cite{pw}, we have
$C[k_2,\dots,k_n|2,\dots,n]=0$;
so we must have $n-1$ distinct indices $1 \leq k_2,\dots,k_n \leq n$
and $k_u=1$ for at most one of them. 

We rewrite the 
first term in (\ref{expr}) as:
\beq\label{key-expr}
\begin{array}{l}
\!\!\! a_{11} \sum_\sigma (-q)^{-\ell(\sigma)}
a_{2\sigma(2)} \dots  a_{n\sigma(n)} =\\ \\
\qquad ~~~=  c_{11} C[2,\dots,n|2,\dots,n]+c_{11}b_{21}C[1,3, \dots,n|2,\dots,n]\\ \\
  \qquad ~~~~~~+c_{11}b_{31}C[2,1,4 \dots,n|2,\dots,n]
+c_{11}b_{41}C[2,3,1,5\dots,n|2,\dots,n]\\ \\
\qquad ~~~~~~ +\,\dots\, +c_{11}b_{n1}C[2,3 \dots,n-1,1|2,\dots,n] \\ \\
  \qquad ~~~=c_{11} C[2,\dots,n|2,\dots,n]+
  \sum_t (-q)^{2-t}c_{11}b_{t1}
C[1,\dots \widehat{t} \dots,n|2,\dots,n]~.
\end{array}
\eeq

\medskip
Let us now look at the second term in (\ref{expr}).
Reasoning as before, we have:
\beq\label{term2-1}
\begin{array}{l}
\!\!\!(-q)^{1-t} a_{t1}
\sum_\tau (-q)^{-\ell(\tau)} a_{1\tau(1)} \dots \widehat{a_{t\tau(t)}}
\dots a_{n\tau(n)}=~~\\ \\
\qquad\!\!\! =(-q)^{1-t} b_{t1}c_{11}
  \sum_{\tau, k_1,\dots k_t \dots k_n} (-q)^{-\ell(\tau)}
  b_{1k_1}c_{k_1\tau(1)} \dots
\widehat{b_{1k_t}c_{k_t\tau(t)}}\dots b_{nk_n}c_{n\tau(n)}\,.
\end{array}
\eeq
However, we notice that here it must be $k_1=1$, otherwise
$b_{1k_1}=0$, hence this forces $k_t=t$ for all $t>1$.
So we can write:
\beq\label{term2}
\begin{array}{l}
\!\!\!\!\!\!\!\!\!\!\!\!\!\!(-q)^{1-t} a_{t1}
\sum_\tau (-q)^{-\ell(\tau)} a_{1\tau(1)} \dots \widehat{a_{t\tau(t)}}
\dots a_{n\tau(n)}=\\ \\
\qquad~~~~~~~ =(-q)^{1-t} b_{t1}c_{11}
  \sum_\tau (-q)^{-\ell(\tau)}
  c_{1\tau(1)}\dots \widehat{c_{t\tau(t)}}\dots c_{n\tau(n)} \\ \\
\qquad ~~~~~~~= -(-q)^{2-t}c_{11}b_{t1}C[1,\dots \widehat{t},\dots,n|2,\dots,n]
\end{array}
\eeq
because by (\ref{comm-rel-bc}) we have  $b_{t1}c_{11}=qc_{11}b_{t1}$.

If we substitute expressions (\ref{key-expr}) and (\ref{term2})
in (\ref{expr}) and simplify we remain with just one term:
$$
\ddet_q(a_{ij})= c_{11} C[2,\dots,n|2,\dots,n]=\ddet_q(c_{ij})~.
$$\\[-2em]
\end{proof}

\begin{proposition}\label{propqpb1}
The map $j_1:\cO_q(P) \lra
\cF(U_1):=\cO_q(\rSL_n)[a_{11}^{-1}]$ defined on the generators as:
$$
\begin{array}{c}
j_1(p_{11}^{\pm 1})=a_{11}^{\pm 1}\;, \qquad j_1(p_{1\beta})=a_{1\beta}\;, 
\quad j_1(p_{\alpha\beta})=a_{11}^{-1} D^{1\beta}_{1\alpha}\;, 
\end{array}
$$
$\alpha,\beta=2,\ldots  n$, 
 is an $\cO_q(P)$-comodule algebra map.
\end{proposition}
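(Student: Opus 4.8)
The plan is to produce $j_1$ by descending the algebra map $J_1$ of Lemma~\ref{lemmaqpb1} along quotient maps, and then to verify the $\cO_q(P)$-comodule property on algebra generators.

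First I would record the presentation $\cO_q(P)\cong \cO_q(p_{ij})/(\ddet_q(p_{ij})-1)$. Both $\cO_q(P)=\cO_q(\rSL_n)/I_q(P)$ and $\cO_q(p_{ij})/(\ddet_q(p_{ij})-1)$ are quotients of $\cO_q(\M_n)$ by the ideal generated by $\ddet_q-1$ together with the elements $a_{\alpha 1}$ ($\alpha\geq 2$); the only point to check is that modulo the $a_{\alpha 1}$ the quantum determinant $\ddet_q(a_{ij})$ becomes $\ddet_q(p_{ij})$, which is clear from the Laplace expansion along the first column since every permutation with $\sigma(1)\neq 1$ contributes a factor $a_{\sigma(1)1}$. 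Composing $J_1$ with the localized projection $\cO_q(\M_n)[a_{11}^{-1}]\to\cO_q(\rSL_n)[a_{11}^{-1}]=\cF(U_1)$ and invoking Lemma~\ref{lemmaqpb1bis}, the composite sends $\ddet_q(p_{ij})-1$ to $\ddet_q(a_{ij})-1=0$; since $\ddet_q(p_{ij})$ is central, the composite annihilates the whole ideal $(\ddet_q(p_{ij})-1)$, hence factors through an algebra map $j_1:\cO_q(P)\to\cF(U_1)$ with the asserted values on generators.

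It then remains to check $\de_1\circ j_1=(j_1\otimes\mathrm{id})\circ\Delta_{\cO_q(P)}$, with $\de_1$ the coaction of Proposition~\ref{comod1} and $\Delta_{\cO_q(P)}$ the coproduct of $\cO_q(P)$. Since $\de_1$, $\Delta_{\cO_q(P)}$, $j_1$, and $j_1\otimes\mathrm{id}$ are all algebra maps, it is enough to test the identity on the generators $p_{11}^{\pm 1},p_{1\beta},p_{\alpha\beta}$. For $p_{11}^{\pm1}$ and $p_{1\beta}$ it is a short direct computation from $\Delta(a_{1j})=\sum_k a_{1k}\otimes a_{kj}$, the vanishing $\pi(a_{k1})=0$ for $k\geq 2$, and $\de_1(a_{11}^{-1})=a_{11}^{-1}\otimes \pi(d)^{-1}=a_{11}^{-1}\otimes p_{11}^{-1}$. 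The case $p_{\alpha\beta}$ is the substantive one: I would write $j_1(p_{\alpha\beta})=a_{11}^{-1}D^{1\beta}_{1\alpha}=a_{\alpha\beta}-q^{-1}a_{11}^{-1}a_{1\beta}a_{\alpha 1}$, apply $\de_1$ termwise (it is multiplicative), discard the right-hand factors killed by $\pi$, and then use the relations $p_{11}p_{1\beta}=q^{-1}p_{1\beta}p_{11}$ and $p_{11}p_{\gamma\beta}=p_{\gamma\beta}p_{11}$ holding in $\cO_q(P)$ — the second one precisely because $p_{\gamma 1}=0$ — to move the factors $p_{11}^{\pm 1}$ into place; the first-column contribution cancels and what survives is exactly $\sum_{\gamma\geq 2}a_{11}^{-1}D^{1\gamma}_{1\alpha}\otimes p_{\gamma\beta}=(j_1\otimes\mathrm{id})\Delta_{\cO_q(P)}(p_{\alpha\beta})$. (Equivalently one may invoke the coproduct formula for quantum minors, $\Delta(D^{1\beta}_{1\alpha})=\sum_{k<l}D^{kl}_{1\alpha}\otimes D^{1\beta}_{kl}$, and note that only the $k=1$ terms survive $\mathrm{id}\otimes\pi$.)

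The hard part will be purely combinatorial rather than conceptual: tracking which quantum-minor terms survive the projection $\pi$ and correctly commuting the powers of $p_{11}$ past the remaining generators of $\cO_q(P)$, where it is essential that the vanishing $p_{\alpha 1}=0$ collapses several Manin relations into genuine commutativity. Everything else — the descent of $J_1$ and the reduction to generators — is formal once Lemmas~\ref{lemmaqpb1} and~\ref{lemmaqpb1bis} are available.
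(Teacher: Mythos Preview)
Your proposal is correct and follows essentially the same route as the paper: descend $J_1$ to the quotients using Lemma~\ref{lemmaqpb1bis} to obtain the algebra map $j_1$, then verify the comodule identity on generators. The only cosmetic difference is that for the $p_{\alpha\beta}$ case the paper goes straight to the coproduct formula for quantum minors (your parenthetical alternative), whereas your primary suggestion expands $a_{11}^{-1}D^{1\beta}_{1\alpha}$ and computes by hand; both lead to the same $\sum_\gamma a_{11}^{-1}D^{1\gamma}_{1\alpha}\otimes p_{\gamma\beta}$.
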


\begin{proof}
We canonically have $\cO_q(\rSL_n)[a_{11}^{-1}]=
 $ $\cO_q(\M_n)[a_{11}^{-1}]/(\det_q(a_{ij})-1)$ and $\cO_q(P)=
 \cO_q(\rSL_n)/I_q(P) =\cO_q(p_{ij})/ (\ddet_q(p_{ij})-1)$ as algebras.
 Because
of the previous lemma, $j_1:\cO_q(P)\to \cO_q(\rSL_n)[a_{11}^{-1}]$ is well defined; in fact it is the
  algebra map $J_1:\cO_q(p_{ij})\to \cO_q(\M_n)[a_{11}^{-1}]$ induced on the
  quotients.

We next show that $j_1$ is an $\cO_q(P)$-comodule morphism, i.e.,
$\de_1 \circ j_1 = ( j_1 \otimes \mathrm{id}) \circ  \Delta_P$,
where $\Delta_P$ is the comultiplication in $\cO_q(P)$ and $\de_1$
is the $\cO_q(P)$ coaction on $\cF(U_1)=\cO(V_1)$ as defined in  Proposition \ref{comod1}. 
Since $j_1$ is an algebra map, it is enough to check the comodule
property on the generators. 
Let us look at the case of $p_{\alpha\beta}$, the case $p_{1j}$
being an easy calculation.
On the one hand,  using the coproduct formula for quantum minors  (see e.g. \cite{fi3})
$$
\Delta(D^{1j}_{1i})=\mbox{$\sum_{r<s}$}\, D^{rs}_{1i} \otimes D^{1j}_{rs}~, 
$$ we have:
\beq\label{comod-sl}
\begin{array}{rl}
\!\!\!\!\!\!(\de_1 \circ j_1)(p_{\alpha\beta})\!\!\!&=\,
\de_1(a_{11}^{-1})\, \de_1(D^{1\beta}_{1\alpha})=
\big(a_{11}^{-1}\otimes \pi(a_{11}^{-1})\big)\sum_{r<s} D^{rs}_{1\alpha} \otimes 
\pi(D^{1\beta}_{rs}) \\[.8em]
&=\,\sum_{k<\gamma} a_{11}^{-1} \!D^{r\gamma}_{1\alpha} \otimes \pi(a_{11}^{-1} \!D^{1\beta}_{r\gamma})= 
\sum_{\gamma} a_{11}^{-1}  \!D^{1\gamma}_{1\alpha}\otimes p_{\gamma\beta}\,.
\end{array}
\eeq
On the other hand:
$$
\begin{array}{rl}
\big(( j_1 \otimes \mathrm{id}) \circ  \Delta_P\big)(p_{\alpha\beta})&=
( j_1 \otimes \mathrm{id}) \sum_\gamma p_{\alpha \gamma} \otimes p_{\gamma\beta}=
 \sum_\gamma a_{11}^{-1} \!D^{1\gamma}_{1\alpha} \otimes p_{\gamma\beta}\,.
\end{array}
$$\\[-2em]
\end{proof}

We now extend the previous proposition in order to define the  
$\cO_q(P)$-comodule algebra  maps
$j_k:\cO_q(P) \to \cF(U_k)=\cO_q(\rSL_n)[a_{\alpha1}^{-1}]$,
($k=1,\ldots n$) 
thus proving the triviality of the Hopf-Galois extensions
$\cF(U_k)^{\coinv \,\cO_q(P)}\subset \cF(U_k)$. 

Reasoning as before, for each fixed value of $k$, 
we consider the factorization of quantum matrices $(a_{ij})$
similar to (\ref{factor}):
$$\small{
\begin{pmatrix}  
a_{11}a_{k1}^{-1} & 0 & 0&\ldots &  0 & 1&\;0 &\dots &0 \\
a_{21}a_{k1}^{-1} & 1 & 0& \ldots &  0 & 0&\;0 &\dots &0\\
a_{31}a_{k1}^{-1} & 0 & 1&\ldots &  0 & 0 & \;0&\dots & 0\\
\vdots  & \vdots & &\ddots & &\vdots & & \\
a_{k-1_{\:\!}1}a_{k1}^{-1} & 0 & 0 & \ldots &1 &0&  \;0 &\ldots & 0 \\[.8em]
1 & 0 &  \ldots & & 0 & 0 &0 & \ldots&0\\[.8em]
\!a_{k+1{\:\!}1}a_{k1}^{-1} & 0 & \ldots &   & 0 &0& \;1 & 0~ \;\ldots  &0
  \\
\!a_{k+2{\:\!}1}a_{k1}^{-1} &0 & \ldots & &  0 &  0&0 &1~\;\ldots &  0 \\
\vdots  & & & & &  &\vdots & ~~\ddots\!\!\! &\vdots\\
a_{n1}a_{k1}^{-1} & 0 & \dots & & 0& 0& 0 &~~\ldots\!\!\! & 1 \end{pmatrix}\!\!
\begin{pmatrix} a_{k1}\!\! & a_{k\beta}  \\[.2em]
0\!\!& a_{2\:\!\beta}-a_{2\:\! 1}a^{-1}_{k\:\!1}a_{k\beta}  \\[.2em]
0\!\!& a_{3\:\!\beta}-a_{3\:\! 1}a^{-1}_{k\:\!1}a_{k\beta}  \\[.2em]
\vdots  & \vdots\\
0\!\!& a_{k-1\:\!\beta}-a_{k-1\:\! 1}a^{-1}_{k-1\:\!1}a_{k\beta} \\[.2em]
0\!\!& a_{1\:\!\beta}-a_{1\:\! 1}a^{-1}_{k\:\!1}a_{k\beta}  \\[.2em]
0\!\!& a_{k+1\:\!\beta}-a_{k+1\:\! 1}a^{-1}_{k\:\!1}a_{k\beta}  \\[.2em]
0\!\!& a_{k+2\:\!\beta}-a_{k+2\:\! 1}a^{-1}_{k\:\!1}a_{k\beta}  \\[.2em]
\vdots\!\! &  \vdots\\
0\!\!& a_{n\:\!\beta}-a_{n\:\! 1}a^{-1}_{k\:\!1}a_{k\beta}  \\
\end{pmatrix}}
$$
where 
$\beta=2,\ldots n$.
This suggests to exchange row $k$ with row $1$ in order
to identify the
last matrix with the matrix of  generators $\Big(\small{\begin{matrix} p_{11} &
    \!\!p_{1\beta}\\ 0& \!\!  p_{\alpha\beta}\end{matrix}_{_{}}}\Big)$ of 
$\cO_q(P)$. 

\begin{proposition}\label{lemmaqpb2}
The map $j_k:\cO_q(P) \lra \cF(U_k)=\cO_q(\rSL_n)[a_{k1}^{-1}]$, defined on the generators as:
$$
\begin{array}{c}
j_k(p_{11}^{\pm 1})=a_{k1}^{\pm 1}, \quad j_k(p_{1\beta})=a_{k\beta}, 
\quad j_k(p_{\alpha\beta}) = \begin{cases}
-q^{-1}(a_{1\beta}-a_{11} a_{k1}^{-1}a_{k\beta})~ \al=k\\
a_{\al\beta}-a_{\al 1}a_{k1}^{-1}a_{k\beta} \quad\quad\quad \al\not= k
\end{cases} \!\!\!\!,\\[1.4em]
\!\!\!\!\!\!\!\!\!\!\!\!\!\!\!\!\!\!\!\!\!\!\!\!\!\!\!\!\!\!\!\!\!\!\!\!\!\!\!\!\!\!\!\!\!\!\!\!\!\!\!\!\!\!\!\!\!\!\!\!\!\!\!\!\!\mbox{\it i.e., equivalently, }~
j_k(p_{\alpha\beta}) = \begin{cases} -qD^{1\beta}_{\al k}a_{k1}^{-1},
  \quad \al <k \\ 
D^{1\beta}_{1k}a_{k1}^{-1}, ~~~\quad \al =k \\  
D^{1\beta}_{k\al}a_{k1}^{-1}, ~~~\quad \al >k  
\end{cases} \!\!,
\end{array}
$$
$\alpha,\beta=2,\ldots  n$, and extended as algebra map to all $\cO_q(P)$,
 is a well defined $\cO_q(P)$-comodule algebra map for any $k=1,\ldots n$.
\end{proposition}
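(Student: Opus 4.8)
The plan is to carry over, for an arbitrary $k$, the three-step argument used for $k=1$ in Lemma~\ref{lemmaqpb1}, Lemma~\ref{lemmaqpb1bis} and Proposition~\ref{propqpb1}, keeping track of the extra signs and powers of $q$ produced by promoting the row $k$ to the role played by the row $1$ in the case $k=1$. \emph{Step 1: lift and well-definedness.} First I would introduce the lift $J_k:\cO_q(p_{ij})\to\cO_q(\M_n)[a_{k1}^{-1}]$ (where $a_{k1}$ is invertible by the Ore property established above) given on generators by $J_k(p_{11}^{\pm1})=a_{k1}^{\pm1}$, $J_k(p_{1\beta})=a_{k\beta}$, and $J_k(p_{\alpha\beta})$ equal to $-qD^{1\beta}_{\alpha k}a_{k1}^{-1}$, $D^{1\beta}_{1k}a_{k1}^{-1}$, $D^{1\beta}_{k\alpha}a_{k1}^{-1}$ according to whether $\alpha<k$, $\alpha=k$, $\alpha>k$. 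A short computation using $a_{k1}a_{k\beta}=q^{-1}a_{k\beta}a_{k1}$ shows these images coincide with the ``affine'' expressions $a_{\alpha\beta}-a_{\alpha1}a_{k1}^{-1}a_{k\beta}$ (and $-q^{-1}(a_{1\beta}-a_{11}a_{k1}^{-1}a_{k\beta})$ for $\alpha=k$) appearing in the statement, i.e. with the entries of the right factor $(c_{ij})$ of the displayed matrix factorisation. Well-definedness of $J_k$ is then the statement that the $J_k(p_{ij})$ satisfy the Manin relations: for the $J_k(p_{\alpha\beta})$ this follows, exactly as in Lemma~\ref{lemmaqpb1}, from the fact (\cite[Thm.~7.3]{fi3}) that the $2\times2$ quantum minors with a fixed column set $\{1,\beta\}$ and varying rows obey the Manin relations among themselves, together with the commutation of $a_{k1}^{\pm1}$ with $D^{1\gamma}_{rs}$ (which produces a $q^{\pm1}$ precisely when $k\in\{r,s\}$ — this is what forces the case split and the $-q$ normalisations); the relations among the $J_k(p_{1\beta})$ are immediate, and the mixed relations between $J_k(p_{1\gamma})$ and $J_k(p_{\alpha\beta})$ come from the commutation relations between $a_{k\gamma}$ and the minors $D^{1\beta}_{rs}$ recalled in Lemma~\ref{lemmaqpb1}, reindexed by $k$.

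\emph{Step 2: the determinant.} Next I would prove $J_k(\ddet_q(p_{ij}))=\ddet_q(a_{ij})$, the analogue of Lemma~\ref{lemmaqpb1bis}. Writing $(a_{ij})=(b_{ij})(c_{ij})$ as in the displayed factorisation, with $(b_{ij})$ the lower factor whose first column is $a_{i1}a_{k1}^{-1}$ and $(c_{ij})$ the matrix of the $J_k(p_{ij})$ (up to the normalisation of its $k$-th row and the transposition of rows $1$ and $k$), I would run the quantum Laplace expansion of $\ddet_q(a_{ij})$ along the first column exactly as in \eqref{expr}--\eqref{term2}. The only change is that the privileged summand is now the one indexed by $r=k$, carrying the Laplace coefficient $(-q)^{1-k}$; together with the $-q$ factors built into $J_k(p_{\alpha\beta})$ for $\alpha<k$ (equivalently, the sign of the row swap $1\leftrightarrow k$) these are what makes the cross terms cancel as in \eqref{key-expr}--\eqref{term2}, leaving $\ddet_q(a_{ij})=\ddet_q(c_{ij})$, hence $J_k(\ddet_q(p_{ij}))=\ddet_q(a_{ij})$.

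\emph{Step 3: descent and comodule property.} Since $\cF(U_k)=\cO_q(\M_n)[a_{k1}^{-1}]/(\ddet_q-1)$ and $\cO_q(P)=\cO_q(p_{ij})/(\ddet_q(p_{ij})-1)$, Step~2 shows that $J_k$ descends to the algebra map $j_k:\cO_q(P)\to\cF(U_k)$ of the statement. Finally I would verify $\de_k\circ j_k=(j_k\otimes\mathrm{id})\circ\Delta_P$ on generators: for $p_{11}^{\pm1}$ and $p_{1\beta}$ this is a direct short check, and for $p_{\alpha\beta}$ one proceeds as in \eqref{comod-sl}, expanding $\Delta(D^{1\beta}_{rs})=\sum_{u<v}D^{uv}_{rs}\otimes D^{1\beta}_{uv}$, using $\de_k(a_{k1}^{-1})=a_{k1}^{-1}\otimes\pi(d)^{-1}$ and $\pi(a_{\mu1})=0$ for $\mu\neq1$; only the terms with $\{u,v\}=\{1,\gamma\}$ survive the projection $\pi$, and they reassemble into $\sum_\gamma j_k(p_{\alpha\gamma})\otimes p_{\gamma\beta}$.

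I expect the main obstacle to be Step~2: correctly tracking the signs and powers of $q$ generated by the transposition of rows $1$ and $k$ in the quantum Laplace expansion, and checking that they are exactly compensated by the three-case sign convention in the definition of $j_k$ (in particular by the $-q^{-1}$ appearing in the $\alpha=k$ row). One might hope to bypass this by exhibiting an algebra isomorphism $\cF(U_k)\cong\cF(U_1)$ intertwining $\de_k$ with $\de_1$ and carrying $j_k$ to $j_1$, thereby reducing everything to Proposition~\ref{propqpb1}; but the Manin relations are not invariant under arbitrary permutations of the rows, so such a ``renumbering'' map exists only in a $q$-twisted form, and I would therefore carry out the direct computation above.
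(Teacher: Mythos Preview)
Your plan is correct and follows essentially the same three-step route as the paper's proof: verify the Manin relations for the images (the paper organises this into the checks (i)--(iv) plus the nine mixed cases $a_{k\gamma}$ versus the three types of minors), establish the determinant identity by the same Laplace-expansion argument as Lemma~\ref{lemmaqpb1bis}, and then descend and verify the comodule property on generators exactly as in \eqref{comod-sl}. One small inaccuracy to flag for the computation: contrary to what you say in Step~1, $a_{k1}$ actually \emph{commutes} (no $q^{\pm1}$) with each of $D^{1\beta}_{\alpha k}$, $D^{1\beta}_{1k}$, $D^{1\beta}_{k\alpha}$ --- this is the paper's point (ii) --- and the $-q$ normalisation for $\alpha<k$ arises instead from the $q$-antisymmetry of the $2\times2$ minor under swapping its row indices (needed to write $D^{1\beta}_{\alpha k}$ with ordered rows), not from a nontrivial commutation with $a_{k1}^{-1}$.
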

\begin{proof} This is a direct check similar to Proposition
\ref{lemmaqpb1}. Recalling the  commutation relations of the
$p_{ij}$'s (cf. proof of Proposition \ref{lemmaqpb1}), and those  between
quantum minors in \cite{fi3}, we have:
i) the $a_{kj}$ among themselves have the same commutation relations as
the $p_{1j}$'s. ii)  $a_{k1}$ commutes with $D_{\al k}^{1\beta},
D_{1k}^{1\beta}, D_{k\al }^{1\beta}$. iii) The
$-qa_{k1}^{-1}\!D_{\al k}^{1\beta}$'s, satisfy the same Manin relations
  among themselves as the $p_{\al\beta}$'s; similarly for the 
$a_{k1}^{-1}\!D_{1k}^{1\beta}$'s and the
$a_{k1}^{-1}\!D_{k\al}^{1\beta}$'s. iv) The mixed commutation relations:
of $-qa_{k1}^{-1}\!D_{\al k}^{1\beta}$ 
 with $a_{k1}^{-1}\!D_{1k}^{1\beta}$ and with
 $a_{k1}^{-1}\!D_{k\al}^{1\beta}$, and of
 $a_{k1}^{-1}\!D_{1k}^{1\beta}$ with 
 $a_{k1}^{-1}\!D_{k\al}^{1\beta}$, also satisfy the same Manin
 relations as those of the corresponding $p_{\al\beta}$'s.

Then we are left to check the commutation relations
of $a_{k\gamma}$ with  $-q a_{k1}^{-1} \!D^{1\beta}_{\alpha k}$,
$a_{k1}^{-1} \!D^{1j}_{ki}$ and  $a_{k1}^{-1}\!D_{k\al}^{1\beta}$.
There are nine of these, depending on the combinations $k>\al, k=\al, k<\al$ with
$\gamma> \beta, \gamma=\beta, \gamma <\beta$. These indeed correspond 
to the commutation relations between  $p_{1\gamma}$ and $p_{\al\beta}$.   

We conclude that $j_k$ is a well defined algebra map because in
$\cO_q(\rSL_n)[a_{k1}^{-1}]$ we have
$j_k(p_{11})j_k(\det_q(p_{\al\beta}))=1$, consistently with the last of the
defining relations of the algebra $\cO_q(P)$:
$p_{11}\det_q(p_{\al\beta})=1$. This is obtained with the same
argument as in Lemma \ref{lemmaqpb1bis}.

Since $j_k$ is an algebra map it is an $\cO_q(P)$-comodule map provided
the comodule property $\de_1 \circ j_1 = ( j_1 \otimes \mathrm{id})
\circ  \Delta_P$ holds on the generators. It is straighforward to see
that this is indeed the case on $p_{1j}$. Let's compute the case
$p_{\al\beta}$ with $\alpha>k$ (the other
cases being similar):
$$ 
\begin{array}{rl}
\!\!\!\!\!\!(\de_k \circ j_k)(p_{\alpha\beta})\!\!\!&=\,
\de_k(a_{k1}^{-1})\, \de_k(D^{1\beta}_{k\alpha})=
\big(a_{k1}^{-1}\otimes \pi(a_{11}^{-1})\big)\sum_{r<s} D^{rs}_{k\alpha} \otimes 
\pi(D^{1\beta}_{rs}) \\[.7em]
&=\,\sum_{r<\gamma} a_{k1}^{-1} \!D^{r\gamma}_{1\alpha} \otimes \pi(a_{11}^{-1} \!D^{1\beta}_{r\gamma})=
\sum_{\gamma} a_{k1}^{-1}  \!D^{1\gamma}_{1\alpha}\otimes p_{\gamma\beta}\,\\[.7em]
&=\big(( j_1 \otimes \mathrm{id}) \circ
  \Delta_P\big)(p_{\al\beta})\,.\end{array}
$$\\[-2em]
\end{proof}

\begin{remark}Recalling
Remark \ref{cleft-rem} and Observation \ref{obsjtheta}, as corollary of the above
proposition we have
$\cF(U_k)\simeq \C_q[a_{i1}a_{k1}^{-1}]^{}_{i\in {\cal I}_k}\,\#
\,\cO_q(P)$, ${\cal I}_k:=\{i; 1\!\leq i\!\leq n, i\not= k\}$, where
it is easy to check that the smashed product is nontrivial
(i.e., different from the tensor product).
\end{remark}

\begin{theorem}\label{mainproj-thm} 
Let the notation be as in the previous section.
The assignment:
$$
U_I \mapsto \cF(U_I):=\cO_q(\rSL_n)S_{i_1}^{-1}\dots S_{i_s}^{-1},
\qquad I=\{i_1, \dots, i_s\}
$$
defines a quantum principal
bundle on 
the quantum ringed space \break
$(\rSL_n(\C)/P, \cF^{\coinv \, \cO_q(P)})$, with  structure sheaf $\cF^{\coinv \, \cO_q(P)}=\cO_{\rSL_n/P}$
given by projective localizations of the quantum homogeneous projective  space $\tilde{\cO}_q(\bP^{n-1})=\tilde{\cO}_q(\rSL_n/P)$.
\end{theorem}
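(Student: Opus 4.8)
The plan is to obtain the statement as an application of Theorem \ref{main}, feeding in the concrete data assembled in this section. First I would record that all hypotheses of Theorem \ref{main} hold here: by Definition \ref{qmatrices} the algebras $\cO_q(\rSL_n)$ and $\cO_q(P)=\cO_q(\rSL_n)/I_q(P)$ are quantizations of $\cO(\rSL_n)$ and $\cO(P)$; the element $d=a_{11}$ is a quantum section, since $\Delta_\pi(a_{11})=a_{11}\otimes p_{11}$; from $\Delta(a_{11})=\sum_{i\in\cI}a_{1i}\otimes a_{i1}$ the elements $d_i=a_{i1}$ are linearly independent and, by Lemma \ref{def-di}, form a basis of $\tilde{\cO}_q(\rSL_n/P)_1$, hence generate $\tilde{\cO}_q(\rSL_n/P)$, which by the presentation \eqref{proj-pres} coincides with $\tilde{\cO}_q(\bP^{n-1})$; and by the Lemma preceding Proposition \ref{shf-def-prop2} the multiplicative sets $S_i=\{a_{i1}^k\}_{k\in\N}$ are Ore in $\cO_q(\rSL_n)$ and the iterated localizations $\cO_q(\rSL_n)S_{i_1}^{-1}\dots S_{i_s}^{-1}$ are independent of the order.

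Second, I would invoke parts (1)--(3) of Theorem \ref{main}, which is exactly the content of Proposition \ref{shf-def-prop2}: the assignment $U_I\mapsto\cO_q(U_I)=\C_q[a_{k1}a_{i1}^{-1}]_{k\in\cI}$ defines a structure sheaf $\cO_{\rSL_n/P}$ making $(\rSL_n(\C)/P,\cO_{\rSL_n/P})$ a quantum ringed space, the assignment $U_I\mapsto\cF(U_I)=\cO_q(\rSL_n)S_{i_1}^{-1}\dots S_{i_s}^{-1}$ defines a sheaf of $\cO_q(P)$-comodule algebras, and $\cF^{\coinv\,\cO_q(P)}=\cO_{\rSL_n/P}$, the latter being the projective localization of the quantum homogeneous projective space $\tilde{\cO}_q(\bP^{n-1})=\tilde{\cO}_q(\rSL_n/P)$.

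Third --- the only point still requiring input --- I would establish the locally cleft condition of Definition \ref{hg-shf}. By Proposition \ref{lemmaqpb2} (its $k=1$ instance being Proposition \ref{propqpb1}), for every $k\in\cI=\{1,\dots,n\}$ the map $j_k:\cO_q(P)\to\cF(U_k)=\cO_q(\rSL_n)[a_{k1}^{-1}]$ is a well-defined $\cO_q(P)$-comodule algebra map. By Remark \ref{cleft-rem} an $\cO_q(P)$-comodule algebra map is automatically normalized and convolution invertible with inverse $j_k\circ S$, so each extension $\cF(U_k)^{\coinv\,\cO_q(P)}\subset\cF(U_k)$ is trivial, in particular cleft; equivalently, by Observation \ref{obsjtheta}, $\cF(U_k)\simeq\C_q[a_{i1}a_{k1}^{-1}]_{i\in\cI_k}\,\#\,\cO_q(P)$. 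Hence $\{j_k\}_{k\in\cI}$ is a projective cleaving map relative to the affine cover $\{U_k\}_{k\in\cI}$ --- precisely the cover on which condition (1) was tested --- so $\cF$ is locally cleft. Combining the three steps, $\cF$ satisfies both conditions of Definition \ref{hg-shf} on the cover $\{U_k\}_{k\in\cI}$, and is therefore a quantum principal bundle on $(\rSL_n(\C)/P,\cF^{\coinv\,\cO_q(P)})$ with the asserted structure sheaf.

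I do not expect a genuine obstacle in this final step: the substantive work --- well-definedness of the $j_k$, i.e.\ preservation of the Manin relations via the quantum-minor commutation identities of \cite{fi3}, together with the identity $J_1(\ddet_q(p_{ij}))=\ddet_q(a_{ij})$ and its analogues --- has already been carried out in Lemmas \ref{lemmaqpb1}, \ref{lemmaqpb1bis} and Propositions \ref{propqpb1}, \ref{lemmaqpb2}, so the theorem is essentially a repackaging of those results. The only thing worth making explicit is the compatibility just noted: the cover used to verify the coinvariance identity $\cF(U_k)^{\coinv\,\cO_q(P)}=\cO_M(U_k)$ is the same cover that carries the cleaving maps $j_k$, which is what the definition of quantum principal bundle requires.
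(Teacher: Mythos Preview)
Your proposal is correct and follows essentially the same approach as the paper: invoke Proposition \ref{shf-def-prop2} (itself a corollary of Theorem \ref{main}) to get the sheaf of $\cO_q(P)$-comodule algebras and the coinvariant identification, and then supply the locally cleft property via Proposition \ref{lemmaqpb2} together with Remark \ref{cleft-rem}. The paper's proof is simply a terse two-line version of exactly this argument.
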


\begin{proof}
After Proposition \ref{shf-def-prop2} we only need to prove
the locally cleft property. This is a direct consequence of
Proposition \ref{lemmaqpb2} and Remark \ref{cleft-rem}.
\end{proof}

\begin{remark}
Notice that our construction, and in particular
Theorem \ref{mainproj-thm}, holds also when we take $q \in \C$, that is, we
specialize the indeterminate $q$ to a complex value.

\end{remark}

\section{Quantum principal bundles from twists}\label{TW}
In this section we obtain new quantum principal bundles via
2-cocycle deformations. 
In particular we provide examples that are locally cleft from
examples that are locally trivial.

We here consider the ground ring to
be a field, hence specialize $q\in k$.
As in \cite{abps} we consider 2-cocycle (twist) deformations 
based on the ``structure group'' Hopf algebra $H$ and also on an ``external symmetry'' Hopf
algebra $K$, i.e. a Hopf algebra coacting on the quantum principal
bundle, the coaction being compatible with that of $H$
(in the commutative case $K$ is  associated with automorphisms of the
bundle, possibly nontrivial on the base).

\subsection{Deformations from twists of  $H$}
Let $\gamma: H\otimes H\to k$ be a 2-cocycle of the Hopf algebra
$H$, denote by  $\gamma^{-1}: H\otimes H\to k$ its convolution inverse
and by $H_\gamma$ the new Hopf algebra that has the same
costructures of $H$ and new product $\cdot_\gamma$ and antipode  obtained by twisting the
ones of $H$ via $\gamma$. Explicitly the product reads, for all $h,h'\in H$, $h\cdot_\gamma
h'=\gamma(h_{(1)}\otimes h'_{(1)})h_{(2)}h'_{(2)}
\gamma^{-1}(h_{(3)}\otimes h'_{(3)})$.
We also denote with $\Gamma$ the functor from the category of right $H$-comodule
algebras to that of right $H_\gamma$-comodule algebras: if $A$ is an
$H$-comodule algebra then $\Gamma(A)\equiv A_\gamma$ is the $\kk_q$-module $A$
with product $a \bullet_\gamma a':=a_{(0)}a'_{(0)}\gamma^{-1}(a_{(1)}\otimes
a'_{(1)})$. Since $H$ and $H_\gamma$ have the same costructures,
$A_\gamma$ is a right $H_\gamma$-comodule algebra using the same
comodule structure map as for $A$. The functor $\Gamma$ is the identity on morphisms.

\begin{theorem}\label{TWIST}
Let $\gamma$ be a 2-cocycle of the Hopf algebra
$H$ and $\Gamma$ the corresponding functor of comodule
algebras. The sheaf $\cF$ is an $H$-principal bundle (quantum
principal bundle) over the ringed space
$(M,\cO_M)$ if and only if $\Gamma\circ \cF$ is an $H_\gamma$-principal bundle over
$(M,\cO_M)$.
\end{theorem}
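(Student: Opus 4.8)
The plan is to unwind Definition~\ref{hg-shf} and check that its two defining conditions are transported by the functor $\Gamma$; the only substantial ingredient is the twist-stability of cleftness, which is the single-comodule-algebra statement already available from \cite{abps}. First I would record that $\Gamma\circ\cF$ really is a sheaf of $H_\gamma$-comodule algebras: since $\Gamma$ leaves the underlying $\kk_q$-module and the comodule structure map of each $\cF(U)$ unchanged and is the identity on morphisms, every restriction map $r_{UW}$ of $\cF$ stays an $H_\gamma$-comodule algebra morphism of $\Gamma\circ\cF$ --- an algebra map that is also a comodule map automatically intertwines the twisted products $\bullet_\gamma$ --- while the sheaf axioms, being conditions on the underlying modules and the restriction maps alone, are inherited verbatim.

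Next, for the fixed open cover $\{U_i\}$ witnessing that $\cF$ is an $H$-principal bundle I would show the \emph{same} cover works for $\Gamma\circ\cF$. For condition~(1): because $\gamma$ is normalized (so $\gamma^{-1}(1\otimes 1)=1$), for any $H$-comodule algebra $A$ the twisted product $\bullet_\gamma$ restricts on coinvariant elements to the original product, while the coinvariants form the same subset since the coaction is untouched; hence $(\Gamma A)^{\coinv H_\gamma}=A^{\coinv H}$ as algebras, and in particular $(\Gamma\circ\cF)(U_i)^{\coinv H_\gamma}=\cF(U_i)^{\coinv H}=\cO_M(U_i)$.

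The main step is condition~(2), local cleftness: writing $B_i:=\cF(U_i)^{\coinv H}$, one must show that $\cF(U_i)$ being a cleft $H$-extension of $B_i$ forces $\Gamma(\cF(U_i))$ to be a cleft $H_\gamma$-extension of $B_i$. This is exactly the algebraic input of \cite{abps}: up to isomorphism a cleft extension is a crossed product $B_i\,\sharp_{\tau_i}\,H$ (cf. Observation~\ref{obsjtheta}), the functor $\Gamma$ carries it to a crossed product $B_i\,\sharp_{\tau_i^\gamma}\,H_\gamma$ with $\tau_i^\gamma$ the $\gamma$-twisted $2$-cocycle and a correspondingly twisted measuring, and crossed products are always cleft. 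Equivalently, from a normalized cleaving map $j_i\colon H\to\cF(U_i)$ one again obtains a cleaving map $j_i^\gamma\colon H_\gamma\to\Gamma(\cF(U_i))$: it is still an $H_\gamma$-comodule map because the comodule structures are unchanged, and its convolution invertibility with respect to the twisted convolution follows from that of $j_i$ together with the invertibility of $\gamma$. Thus a projective cleaving map $\{j_i\}$ for $\cF$ yields one for $\Gamma\circ\cF$, and $\Gamma\circ\cF$ is an $H_\gamma$-principal bundle over $(M,\cO_M)$.

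For the converse I would invoke involutivity: $\gamma^{-1}$ is a $2$-cocycle of $H_\gamma$, one has $(H_\gamma)_{\gamma^{-1}}=H$, and the associated functor composed with $\Gamma$ is the identity on $H$-comodule algebras; applying the forward implication to the pair $(H_\gamma,\gamma^{-1})$ and the sheaf $\Gamma\circ\cF$ returns that $\cF$ is an $H$-principal bundle. The only genuinely nontrivial point is the twist-stability of cleftness in the third paragraph, and that is precisely what \cite{abps} supplies; everything else is bookkeeping about $\Gamma$ preserving underlying modules, morphisms, and coinvariants.
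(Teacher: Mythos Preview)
Your proposal is correct and follows essentially the same route as the paper's proof: both check that $\Gamma$ preserves the sheaf structure (trivially, since $\Gamma$ is the identity on underlying modules and morphisms), that coinvariants are unchanged as algebras, and that cleftness is twist-stable by invoking the single-algebra result from \cite{abps} (the paper also cites \cite{MS}), with the converse handled by the involutivity $(H_\gamma)_{\gamma^{-1}}=H$. Your write-up is in fact slightly more explicit than the paper's in justifying why restriction maps remain algebra maps for $\bullet_\gamma$ and why the algebra structure on coinvariants is unaffected.
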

\begin{proof}
If $\cal F$ is a sheaf of $H$-comodule algebras over 
$M$ then  $\Gamma\circ \cF$ is easily seen to be a sheaf of 
$H_\gamma$-comodule algebras over
$M$ (locality and the gluing property  immediately follow
recalling that $\Gamma$ is the identity on objects). Vice versa, since
the convolution inverse
$\gamma^{-1}$ is a 2-cocycle of $H_\gamma$, and
$(H_\gamma)_{\gamma^{-1}}=H$, if $\Gamma\circ \cF$ is a sheaf of
$H_\gamma$-comodule algebras then  $\cF=\Gamma^{-1}\circ (\Gamma\circ
\cF)$ is a sheaf of $H$-comodule algebras.

Let $\{U_i\}$ be a covering of $M$ with $\cF(U_i)^{\coinv
  H}=\cO_M(U_i)$ and such that $\cF$ is locally cleft. Since
$H_\gamma$ and $H$ have the same coproduct we have $\cF(U_i)^{\coinv
  H_\gamma}=\cF(U_i)^{\coinv H}=\cO_M(U_i)$ as algebras. Finally, $\cF(U_i)^{\coinv H}\subset
\cF(U_i)$ is a cleft extension if and only if  $\cF(U_i)^{\coinv
  H}\subset \cF(U_i)_\gamma$ is a cleft extension,
cf. \cite[Theorem 5.2]{MS} or \cite[Corollary
3.7]{abps}.
\end{proof}

\begin{remark}\label{ltlc}
We further observe that if the $H$-principal bundle $\cF$ is locally
trivial with respect to a covering $\{U_i\}$, i.e., the cleft extensions $\cF(U_i)^{\coinv
  H}\subset \cF(U_i)$ are trivial extensions, so that $\cF(U_i)\simeq \cF(U_i)^{\coinv
  H}\sharp H$ (cf.~Observation \ref{obsjtheta}),  then this is no more the
case for the twisted $H_\gamma$-principal bundle $\Gamma\circ \cF$
because the extensions  $\cF(U_i)^{\coinv
  H_\gamma}\subset \cF(U_i)_\gamma$  are cleft but nontrivial. Indeed, as
follows from \cite[Theorem 5.2]{MS}, 
$\cF(U_i)_\gamma\simeq\cF(U_i)^{\coinv H}\sharp_{\gamma^{-1}} H_\gamma$,
where $\sharp_{\gamma^{-1}}$  denotes the crossed product
given by the 2-cocyle $\gamma^{-1}$ of $H_\gamma$.
\end{remark}

\subsection{Deformations from twists of
  $K$}
Let now $K$ be another Hopf algebra and $\cF$ be a sheaf over the
ringed space $(M,\cO_M)$ of $(K,H)$-bicomodule algebras, i.e. right
$H$-comodule algebras and left $K$-comodule algebras with left
and right
 coactions commuting: $(\rho\otimes id)\circ \delta =(id\otimes \delta)\circ \rho$.
Since $\kk$ is a field, $K$ is free as a $\kk$-module and $\cF^{\coinv H}: U\to \cF(U)^{\coinv H}$ is
a subsheaf of $K$-comodule algebras (because $\cF(U)^{\coinv H}$
are $K$-subcomodule algebras, cf. \cite[Proposition 3.12]{abps}).

A twist
$\sigma$ of $K$ gives the functor $\Sigma$  from
left $K$-comodule algebras $A$ to left $K_\sigma$-comodule algebras
$\Sigma(A)\equiv {}_\sigma A$, where the new product is given by 
$a\,\mbox{${}_\sigma\;\!\!\bullet_{\,}$}
a'=\sigma(a_{(-1)}\otimes a'_{(-1)}) a_{(0)}a'_{(0)}$ (the comodule
structure maps of $A$ and ${}_\sigma A$ being the same). The functor
$\Sigma$ is the identity on morphisms.
As in Theorem \ref{TWIST}, composition of this functor with the sheaf
$\cF$ of $(K,H)$-bicomodule algebras gives the sheaf $\Sigma\circ \cF$
of $(K_\sigma,H)$-bicomodule algebras. 
\begin{theorem}\label{SIGMAsheaf} 
Let the sheaf  $\cF$ of $(K,H)$-bicomodule algebras 
over the ringed space $(M,\cF^{\coinv H})$ 
be an $H$-principal bundle. 
If the $H$-comodule $(H,\Delta)$ has a compatible $K$-comodule
structure, so that it is a $(K,H)$-bicomodule
and the cleaving maps $j_i:H\rightarrow \cF(U_i)$,
relative to a covering $\{U_i\}$ of $M$, are $(K,H)$-bicomodule maps,
then the sheaf $\,\Sigma \circ \cF$ of  $(K_\sigma,H)$-bicomodule
algebras over the ringed space $(M,\Sigma\circ \cF^{\coinv H})$ is an
$H$-principal bundle. 
\end{theorem}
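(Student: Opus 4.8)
The plan is to follow the three steps in the proof of Theorem~\ref{TWIST}, with the external-symmetry twist $\Sigma$ in place of the structure-group twist $\Gamma$. First one checks that $\Sigma\circ\cF$ is a sheaf of $(K_\sigma,H)$-bicomodule algebras over $M$; as already noted before the statement, this follows from $\Sigma$ being the identity on underlying $\kk$-modules and on morphisms, so that the restriction maps of $\cF$ are again morphisms of comodule algebras for the twisted products, and locality and the gluing property are inherited from $\cF$. Since $\kk$ is a field, $\cF^{\coinv H}$ is a subsheaf of $K$-comodule algebras (cf.~\cite[Proposition~3.12]{abps}), so $\Sigma\circ\cF^{\coinv H}$ is a well-defined subsheaf of $K_\sigma$-comodule algebras and $(M,\Sigma\circ\cF^{\coinv H})$ is a ringed space.

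Next one verifies condition~(1) of Definition~\ref{hg-shf} for the covering $\{U_i\}$. The right $H$-coaction on $(\Sigma\circ\cF)(U_i)$ is, by construction, the same linear map as on $\cF(U_i)$, so the sets of $H$-coinvariant elements coincide. Moreover, since $\cF(U_i)^{\coinv H}$ is a $K$-subcomodule algebra, the twisted product $a\,{}_\sigma\!\bullet\, a'=\sigma(a_{(-1)}\otimes a'_{(-1)})\,a_{(0)}a'_{(0)}$ of two $H$-coinvariant elements again lies in $\cF(U_i)^{\coinv H}$ and is computed entirely inside $(\Sigma\circ\cF^{\coinv H})(U_i)$. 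Hence $(\Sigma\circ\cF)(U_i)^{\coinv H}=(\Sigma\circ\cF^{\coinv H})(U_i)$ as algebras, which is condition~(1).

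The substantive step is condition~(2), local cleftness of $\Sigma\circ\cF$. One views the cleaving maps $j_i\colon H\to\cF(U_i)$, which by hypothesis are $(K,H)$-bicomodule maps, as the same underlying linear maps $\tilde\jmath_i\colon H\to(\Sigma\circ\cF)(U_i)$; since the right $H$-coaction is untouched, each $\tilde\jmath_i$ is still a right $H$-comodule map, and it remains to produce a convolution inverse for the \emph{twisted} convolution product on $\Hom\bigl(H,(\Sigma\circ\cF)(U_i)\bigr)$. This is the external-symmetry analogue of the argument behind the structure-group statement \cite[Corollary~3.7]{abps} (equivalently \cite[Theorem~5.2]{MS}): using that $j_i$ is $K$-colinear, that its convolution inverse $j_i^{-1}$ is then again $K$-colinear (by uniqueness of convolution inverses, the $K$-coaction on $H$ being compatible with $\Delta$), and the 2-cocycle identity for $\sigma$, one checks that the map obtained from $j_i^{-1}$ by conjugating with $\sigma$ is a two-sided convolution inverse of $\tilde\jmath_i$ with respect to the twisted convolution product. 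Hence $(\Sigma\circ\cF^{\coinv H})(U_i)\subset(\Sigma\circ\cF)(U_i)$ is a cleft extension for every $i$, so $\Sigma\circ\cF$ is a quantum principal bundle over $(M,\Sigma\circ\cF^{\coinv H})$.

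I expect the main obstacle to be exactly this last convolution-invertibility computation, since it is where the hypotheses that $H$ carries a compatible $K$-comodule structure and that the cleaving maps are $K$-colinear are genuinely used: without them the $\sigma$-factors in the twisted product cannot be reorganised so as to cancel against those of the candidate inverse. If one prefers to avoid the explicit computation, an alternative is to invoke Theorem~\ref{thmDT}: it suffices to observe that the $K$-twist $\Sigma$ preserves the Hopf-Galois property with respect to $H$ (the canonical map is $K$-colinear and $\Sigma$ only reorganises the $K$-degrees) and carries the $H$-comodule and left $\cF(U_i)^{\coinv H}$-module isomorphism $\theta_i\colon\cF(U_i)^{\coinv H}\otimes H\to\cF(U_i)$, $b\otimes h\mapsto b\,j_i(h)$, to an isomorphism of the same type for the twisted algebras, whence cleftness by Theorem~\ref{thmDT}.
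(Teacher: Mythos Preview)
Your proposal is correct, and the alternative route you sketch in the final paragraph is in fact precisely what the paper does. The paper's argument runs entirely through Theorem~\ref{thmDT}: from the cleaving map $j_i$ it forms the left $\cF(U_i)^{\coinv H}$-module and $(K,H)$-bicomodule isomorphism $\vartheta_i\colon \cF(U_i)^{\coinv H}\otimes H\to\cF(U_i)$, $b\otimes h\mapsto b\,j_i(h)$; then it applies the monoidal functor $(\Sigma,\varphi^\ell)$ from $({}^{K}\mathcal{M}^H,\otimes)$ to $({}^{K_\sigma}\mathcal{M}^H,{}^\sigma\!\otimes)$ to obtain $\Sigma(\vartheta_i)\circ(\varphi^\ell)^{-1}\colon {}_\sigma\cF(U_i)^{\coinv H}\,{}^\sigma\!\!\otimes\,{}_\sigma H\to{}_\sigma\cF(U_i)$, and finally observes that, forgetting the $K_\sigma$-structure, ${}_\sigma H=H$ and ${}^\sigma\!\otimes=\otimes$ as $H$-comodules, so this is a left-module and $H$-comodule isomorphism, whence cleftness by Theorem~\ref{thmDT}.

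Your primary approach---keeping $j_i$ as the cleaving map and constructing a twisted convolution inverse directly---is a genuinely different, more hands-on route. It is viable, but as you anticipated it requires the explicit cocycle computation (and the verification that $j_i^{-1}$ is $K$-colinear), which you leave as a sketch. The paper's monoidal-functor argument buys you exactly the avoidance of that computation: the natural transformation $\varphi^\ell$ packages the $\sigma$-reorganisation once and for all, and the $K$-colinearity of $j_i$ enters only to ensure that $\vartheta_i$ is a $K$-comodule map so that $\Sigma$ can be applied to it.
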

\begin{proof} Since the sheaf $\cF^{\coinv H}$  of
$K$-comodule algebras 
is a subsheaf of the sheaf  $\cF$ of $K$-comodule algebras the sheaf
$\Sigma\circ \cF^{\coinv H}$ of $K_\sigma$-comodule algebras is well
defined. Since the $\Sigma$ functor is the identity on objects
$\Sigma\circ \cF^{\coinv H}=(\Sigma\circ \cF)^{\coinv H}$ as
$K_\sigma$-comodule algebras.

We are left to show that the sheaf $\Sigma\circ \cF$ is locally
cleft. From Theorem \ref{thmDT}, for each open $U_i$ we have the local trivialization
\begin{equation}\label{thetaF}
\vartheta_i: \cF(U_i)^{\coinv H}\otimes H\rightarrow
\cF(U_i)~,~~b\otimes h\mapsto \vartheta_i(b\otimes h)=bj_i(h) 
\end{equation} that is an isomorphism of left $ \cF(U_i)^{\coinv H}$-modules and right
$H$-comodules. Since $j_i$ is also a left $K$-module map and
$\cF(U_i)$ is a $K$-comodule algebra we easily have that $\vartheta_i$
is also a left $K$-comodule map.

Recall that a twist $\sigma$ defines a monoidal functor $(\Sigma, \varphi^\ell)$ from the
category of left $K$-comodules
$({}^{K\!}{\cal M},\otimes)$ to that of left $K_\sigma$-comodules
$({}^{K_\sigma\!}{\cal M},{}^\sigma\otimes)$, where ${}^\sigma\otimes$
and $\otimes$ coincide as tensor products of $\kk$-modules.
The functor $\Sigma: {}^K{\cal M}\to {}^{K_\sigma\!}{\cal M}$, $V\mapsto \Sigma(V)\equiv
{}_\sigma V$ is the identity on objects and morphisms because as
coalgebras $K= K_\sigma$, while the natural transformation
$\varphi^\ell$ between the tensor product functors $\otimes$ and
${}^\sigma\otimes$ is given by the $_\sigma K$-comodule isomorphisms
$\varphi^\ell_{VW}:\Sigma(V\otimes W)\to
\Sigma(V){\,}^\sigma\!\!\otimes\, \Sigma(W)$, $v\otimes w\mapsto
\varphi^\ell_{MN}(v\otimes w)=\sigma(v_{(-1)}\otimes w_{(-1)})\, v_{(0)}\otimes w_{(0)}$,
where $\rho(v)=v_{(-1)}\otimes v_{(0)}$, $\rho(w)=w_{(-1)}\otimes
w_{(0)}$ are the left $K$-coactions of $V$ and $W$.

Furthermore, $(\Sigma,
\varphi^\ell)$  is a monoidal functor from the category  of $(K,H)$-bicomodules 
$({}^{K\!}{\cal M}^H,\otimes)$ to that of $(K_\sigma,H)$-bicomodules 
$({}^{K_\sigma\!}{\cal M}^H,{}^\sigma\otimes)$,
(cf.~for example \cite[\S 2.2]{abps}).

Applying the functor $\Sigma$ to the $\cF(U_i)^{\coinv H}$-module and 
 $(K,H)$-bicomodule isomorphism $\vartheta_i$ 
we obtain the  isomorphism of  left ${}_\sigma\cF(U_i)^{\coinv H}$-modules and 
$(K_\sigma,H)$-bicomodules
$$\Sigma(\vartheta_i): {}_\sigma\big(\cF(U_i)^{\coinv H}\otimes
H\big)\rightarrow {}_\sigma\cF(U_i)~,$$
where ${}_\sigma\big(\cF(U_i)^{\coinv H}\otimes
H\big):= \Sigma(\cF(U_i)^{\coinv H}\otimes
H)$ and ${}_\sigma\cF(U_i):= \Sigma(\cF(U_i))$.
Using the $(K_\sigma,H)$-bicomodule isomorphism (we suppress the pedices of
$\varphi^\ell$ for simplicity)
$$\varphi^\ell:  {}_\sigma\big(\cF(U_i)^{\coinv H}\otimes
H\big)\rightarrow  {}_\sigma\cF(U_i)^{\coinv H}{\:}^\sigma\!\!\otimes\,
{}_\sigma H~,$$  where $ {}_\sigma H:=\Sigma(H)$ is just the
$(K,H)$-bicomodule $H$ now seen as a
$(K_\sigma,H)$-bicomodule, we obtain the left $ {}_\sigma\cF(U_i)^{\coinv H}$-module and 
$(K_\sigma, H)$-bicomodule isomorphism
$$\Sigma(\vartheta_i)\circ {\varphi^\ell\,}^{-1}\!:  {}_\sigma\cF(U_i)^{\coinv H}\;{}^\sigma\!\!\otimes\,
{}_\sigma H\,\rightarrow \,{}_\sigma\cF(U_i)~.$$ Forgetting the
$K_\sigma$-comodule structure and recalling that as $H$-comodules
$_\sigma H=H$, and that as tensor products of $H$-comodules we have ${}^\sigma\otimes=\otimes$,
this isomorphism becomes an ${}_\sigma\cF(U_i)^{\coinv H}$-module and 
$H$-comodule isomorphism 
$ {}_\sigma\cF(U_i)^{\coinv H}\otimes
H\rightarrow {}_\sigma\cF(U_i)$,
proving that the  extension ${}_\sigma\cF(U_i)^{\coinv H}\subset  {}_\sigma\cF(U_i)$ 
 is cleft. This holds for each open $U_i$, thus  $\Sigma\circ \cF$ is locally cleft.
\end{proof}

\subsection{Examples}
We twist the quantum principal bundle $\cF$ on the quantum
ringed space $(\rSL_n(\C)/P, \cF^{\coinv \, \cO_q(P)})$ of Theorem
\ref{mainproj-thm} and obtain three new quantum principal bundles: $\Gamma\circ \cF$, $\Sigma\circ  \cF$ and $\Gamma\circ\Sigma\circ
\cF$; the first on the locally ringed space associated with the homogeneous
ring of quantum projective space $\tilde\cO_q(\bP^{n-1})$, the
other two on its
multiparametric deformation $\tilde\cO_{q,\gamma}(\bP^{n-1})$. 
\\

\noindent{\it Deformations from twists of $H=\cO_q(P)$.}\\ 
The
$(n-1)$-dimensional torus $\mathbb{T}^{n-1}$ is a subgroup of $SL_n(\C)$
and correspondingly we have that the Hopf algebra $\cO(\mathbb{T}^{n-1})$ (the
group Hopf algebra over $\C$ of the free abelian group generated by $n-1$
elements) is a quotient of $\cO_q(SL_n) $. It is useful to present $\cO(\mathbb{T}^{n-1})$ as the algebra over $\C$ generated
by the $n$ elements $t_i$, $i=1,\ldots n$ and their inverses $t^{-1}_i$
modulo the ideal generated by  the relation $t_1t_2\ldots t_n=1$. The
Hopf algebra structure is fixed by requiring $t_i$ to be group like.
The Hopf algebra projection $\cO_q(SL_n) \stackrel{pr}{\longrightarrow}
\cO(\mathbb{T}^{n-1})$ on the generators is given by $$a_{ij}\mapsto \delta_{ij}t_i~.$$
We consider the exponential  2-cocycle $\gamma$  on $\mathcal{O} (\mathbb{T}^{n-1})$
defined on the generators $t_i$ by
\be\label{cocycle-exp}
\co{t_j}{t_k}= \gamma_{jk} \mbox{~~ with~ } \gamma_{jk}=\exp\big(i \pi \theta_{jk}\big)  ~;\quad \theta_{jk}=- \theta_{kj} \in \mathbb{R}
\ee
and  extended to the whole algebra via
\begin{equation}\label{twistabc}
\co{ab}{c}=\co{a}{\one{c}}\co{b}{\two{c}}~~,~\co{a}{bc}=\co{\one{a}}{c}\co{\two{a}}{b}
\end{equation} 
for all $a,b,c, \in \mathcal{O} (\mathbb{T}^n)$. 
This 2-cocycle $\gamma$ is pulled back
along the projection $\cO_q(SL_n) \stackrel{pr}{\longrightarrow}
\cO(\mathbb{T}^{n-1})$ to a 2-cocycle $\gamma\circ (pr\otimes pr)$ on
$\cO(\rSL_n)$ (see e.g. \cite[Lemma
4.1]{abps}). Explicitly, denoting with abuse of notation by $\gamma$
the pulled back 2-cocycle, we have that
\begin{equation}\label{TWonSL}
\gamma: \cO_q(SL_n)\otimes \cO_q(SL_n)\rightarrow \C
\end{equation}
is defined by 
$\gamma(a_{ij}\otimes a_{kl})=\delta_{ij}\delta_{kl}\gamma_{il}$, and 
\eqref{twistabc} for all $a,b,c \in \mathcal{O}_q (SL_n)$.
Twist deformation via this 2-cocycle of the quantum group $\mathcal{O}_q (SL_n)$ gives
the multiparametric special linear quantum group studied e.g. in \cite{Res}.

The torus Hopf algebra $\cO(\mathbb{T}^{n-1})$ is also a quotient of the
parabolic quantum group $\cO_q(P)$ defined in
\eqref{qparSL}. Correspondingly the 2-cocycle $\gamma$ on  $\cO(\mathbb{T}^{n-1})$.
is pulled back to a 2-cocycle,  still denoted $\gamma$, on $\cO_q(P)$
providing its multiparametric deformation $\cO_{q,\gamma}(P)$.
\\

We now apply Theorem \ref{TWIST} to the $\cO_q(P)$-principal bundle $\cF$ on the quantum
ringed space $(\rSL_n(\C)/P, \cF^{\coinv \, \cO_q(P)})$ of Theorem
\ref{mainproj-thm} and obtain the $\cO_{q,\gamma}(P)$-principal bundle
$\Gamma\circ \cF$ on  $(\rSL_n(\C)/P, \cF^{\coinv \,  \cO_q(P)})$. 
Furthermore, Remark \ref{ltlc} implies that  
 while the  $\cO_q(P)$-principal bundle $\cF$ is locally
trivial on the cover $\{U_i\}$ of $\bP^{n-1}(\C)=\rSL_n(\C)/P$, the $\cO_{q,\gamma}(P)$-principal bundle
$\Gamma\circ \cF$ is only locally cleft.
\\

\noindent{\it Deformations from twists of $K=\cO(\mathbb{T}^{n-1})$.}\\
We next study twists based on the external Hopf algebra $K=\cO(\mathbb{T}^{n-1})$.
The $\cO_q(P)$-principal bundle $\cF$ on  
$(\rSL_n(\C)/P, \cF^{\coinv \, \cO_q(P)})$ of Theorem \ref{mainproj-thm} is indeed a sheaf of
$(\cO(\mathbb{T}^{n-1}),\cO_q(P))$-bicomodule algebras: 
The left $K=\cO(\mathbb{T}^{n-1})$-coaction on the $\cO_q(P)$-comodule
algebra  $\cO_q(\rSL_n(\C))$ is given by $$\rho(a)=(pr \otimes
id)\Delta^{\phantom{{J_J}_J}}_{\cO_q(\rSL_n(\C))}(a)$$ for all $a\in \cO_q(\rSL_n(\C))$,
and is uniquely extended as algebra map to the sheaf 
$U_I\mapsto \cF(U_I)=\cO_q(\rSL_n(\C))S^{-1}_{i_1}\ldots
S^{-1}_{i_s}$, $I=\{i_1\ldots i_s\}$
of $\cO_q(P)$-comodule algebras on $\rSL_n(\C)/P$, where $\{U_I\}$ is the topology 
on $\rSL_n(\C)/P$ generated by the cover $\{U_i\}$.

Furthermore, the cleaving maps $j_i: \cO_q(P)\to \cF(U_i)=\cO_q(\rSL_n(\C))S^{-1}_i$
become $(\cO(\mathbb{T}^{n-1}),\cO_q(P))$-comodule maps by defining on the
$\cO_q(P)$-comodule $(\cO_q(P),\Delta)$ the compatible left
$\cO(\mathbb{T}^{n-1})$-comodule structure given by 
$\rho(a)=(p \otimes id)\Delta(a)$, where $p$ is the projection
$\cO_q(P)\stackrel{p}{\longrightarrow} \cO(\mathbb{T}^{n-1})$.
We can then consider the 2-cocycle \eqref{cocycle-exp} for
$K=\cO(\mathbb{T}^{n-1})$ and  apply Theorem \ref{SIGMAsheaf} thus concluding that the sheaf
$\Sigma\circ \cF$ is an $\cO_q(P)$-principal bundle over the ringed space
$(\bP^{n-1}(\C), \Sigma\circ \cF^{\coinv\, \cO_q(P)})$. In Remark \ref{64}
we further show it is not locally trivial on the cover $\{U_i\}$.
\\

\noindent{\it Deformations from both twists of   $H=\cO_q(P)$ and $K=\cO(\mathbb{T}^{n-1})$.}\\
Finally, we can consider the 
$\cO_q(P)$-principal bundle $\Sigma\circ \cF$  over the ringed space
$(M, \Sigma\circ \cF^{\coinv\, \cO_q(P)})$, and use the 2-cocycle of
$\cO_q(P)$, obtained via pullback of the 2-cocycle \eqref{cocycle-exp} of  $\cO(\mathbb{T}^{n-1})$,
in order to construct, according to Theorem \ref{TWIST}, the
$\cO_{q,\gamma}(P)$-principal bundle $\Gamma\circ \Sigma\circ \cF$  over the ringed space
$
(\bP^{n-1}(\C), \Sigma\circ \cF^{\coinv\,
  \cO_{q}(P)})$.

 Equivalently the $\cO_{q,\gamma}(P)$-principal bundle
$\Gamma\circ \Sigma\circ \cF$  is over $(\bP^{n-1}(\C),
\Sigma\circ (\Gamma\circ \cF)^{\coinv\, \cO_{q,\gamma}(P)})$,
 since $
(\bP^{n-1}(\C), \Sigma\circ \cF^{\coinv\,
  \cO_{q}(P)})=(\bP^{n-1}(\C),
\Sigma\circ (\Gamma\circ \cF)^{\coinv\, \cO_{q,\gamma}(P)})$, as follows
from $\cO_{q,\gamma}(P)$ and $\cO_{q}(P)$ having the same
coproduct. 

This $\cO_{q,\gamma}(P)$-principal bundle
$\Gamma\circ \Sigma\circ \cF$  is locally trivial with cleaving maps $(\Sigma\circ\Gamma)
(j_i): \cO_{q,\gamma}(P)\rightarrow  (\Sigma\circ \Gamma\circ \cF)
(U_i)$ that are algebra maps since  
$j_i: \cO_{q}(P)\rightarrow   \cF
(U_i)$ in Proposition \ref{lemmaqpb2} are $(\cO(\mathbb{T}^{n-1}),\cO_q(P))$-bicomodule
algebra maps.
\\

We now show that this $\cO_{q,\gamma}(P)$-principal bundle
$\Gamma\circ \Sigma\circ \cF$  is an example of the construction of Theorem \ref{main}. 
This is so because the (graded) algebras $\cO_{q}(\rSL_n)$,
$\cO_{q}(P)$, $\cO_q(\rSL_n/P)$ and their localizations  are left and right
(graded) $\cO(\mathbb{T}^{n-1})$-comodule algebras. 

We first observe that the total space (global sections) of $\Gamma\circ
\Sigma\circ \cF$  is the multiparametric
quantum group
\begin{equation}\label{OSLqg} (\Gamma\circ \Sigma\circ
\cF)(\rSL_n(\C))=\cO_{q,\gamma}(\rSL_n)~,
\end{equation}
with $\cO_{q,\gamma}(P)$ that is a quantum subgroup. Indeed we can pull back
the twist \eqref{cocycle-exp} on $K=
\cO(\mathbb{T}^{n-1})$ to the twist \eqref{TWonSL} on $\cO_q(\rSL_n)$.
Then  $(\Gamma\circ\Sigma)(\cO_q(\rSL_n))$ is the twist of
$\cO_q(\rSL_n)$ as a left $\cO_q(\rSL_n)$-comodule algebra and with the same
twist   \eqref{TWonSL} as a right  $\cO_q(\rSL_n)$-comodule algebra, hence
it is the twisting of $\cO_q(\rSL_n)$ as a Hopf algebra, giving the
Hopf algebra $\cO_{q,\gamma}(\rSL_n)$. Similarly we have 
\begin{equation}\label{OPqg}
(\Gamma\circ \Sigma)(\cO_q(P))=\cO_{q,\gamma}(P)~.
\end{equation} 
In order to show that $\cO_{q,\gamma}(P) $ is a quantum subgroup of
$\cO_{q,\gamma}(\rSL_n)$ recall
that the deformation \eqref{OSLqg} is induced from a left and right action of the
Hopf algebra $\cO(\mathbb{T}^{n-1})$ and notice that the ideal $I_q(P)=(a_{\al 1})\subset \cO_q(\rSL_n)$ is a left and right 
$\cO(\mathbb{T}^{n-1})$-subcomodule algebra. Its twist
deformation  $I_{q,\gamma}(P):=(\Sigma\circ \Gamma) (I_q(P))$
is an ideal in $\cO_{q,\gamma}(\rSL_n)$. It is furthermore a
Hopf ideal since so was $I_q(P)$ in $\cO_q(\rSL_n)$, and because twisting does not
affect the costructures and twisting via the exponential 2-cocycle
\eqref{cocycle-exp} does not affect the antipode as a linear map.  We can then consider
 the quotient Hopf algebra
 $\cO_{q,\gamma}(\rSL_n)/I_{q,\gamma}(P)$, this is easily seen to be the
 multiparametric quantum group in \eqref{OPqg}. 

We next twist $\tilde{\cO}_{q}(\bP^{n-1})=\tilde{\cO}_{q}(\rSL_n/P)$ seen as left 
$K=\cO(\mathbb{T}^{n-1})$-comodule algebra (and a trivial right
$\cO(\mathbb{T}^{n-1})$-comodule algebra). The twist is grade preserving and
therefore  $\tilde{\cO}_{q,\gamma}(\bP^{n-1}):=(\Sigma\circ \Gamma)
(\tilde{\cO}_{q}(\bP^{n-1}))$ is a graded algebra. It is generated by the quantum section $d=a_{11}\in
\tilde{\cO}_{q,\gamma}(\rSL_n)$ and the corresponding $d_i=a_{i1}$ obtained
from the coproduct (that equals that of  $\cO_{q}(\rSL_n)$).
Indeed monomials in
$d_i$, respectively contructed with the product of $\tilde{\cO}_{q}(\bP^{n-1})$ and of
$\tilde{\cO}_{q,\gamma}(\bP^{n-1})$, differ by a phase and hence span
the same $\mathbb{C}$-module 
$\tilde{\cO}_{q,\gamma}(\bP^{n-1})$. Explicitly
 $\tilde{\cO}_{q,\gamma}(\bP^{n-1})$ is the subalgebra generated by the elements $x_i:=d_i=a_{i1}\in
\cO_{q,\gamma}(\rSL_n)$, i.e. it is 
the  multiparametric quantum homogeneous projective space 
$$
\tilde{\cO}_{q,\gamma}(\bP^{n-1})=\C_q\langle x_1, \dots x_{n}\rangle/(x_ix_j-q^{-1}\gamma_{ij\,}^2x_jx_i, i < j)~. 
$$

We now observe that $\cO_{q}(\rSL_n)S_i^{-1}$ is canonically an
$\cO(\mathbb{T}^{n-1})$-bicomodule algebra. We twist it to $(\Sigma\circ
\Gamma)(\cO_{q}(\rSL_n)S_i^{-1})$ and denote by
${}_\gamma\bullet_\gamma$ the corresponding product (notice that
${}_\gamma\bullet_\gamma$ restricted to the sub $\cO(\mathbb{T}^{n-1})$-bicomodule $\cO_q(\rSL_n)$ is
the Hopf algebra twist of the product of $\cO_q(\rSL_n)$). Due to $\gamma(t_i^{-1}\otimes
t_i) =1=\gamma(t_i\otimes
t_i^{-1})$ (cf. \eqref{cocycle-exp} and \eqref{twistabc}), we have
$d_i^{-1}{}_\gamma\!\bullet_{\gamma\,} d_i=d_i^{-1}d_i$ and  $d_{i\:}{}_{\gamma\;}\!\!\bullet_{\gamma\,}
d_i^{-1}=d_id_i^{-1}$. This shows that the inverse $d_i^{-1} $ of $d_i$ in $\cO_q(\rSL_n)$ is
also the inverse  in $\cO_{q,\gamma}(\rSL_n)$. 

Then the identity $(a{\,}_\gamma\!\bullet_{\gamma\,}  d_i^{-1}){}_\gamma\!\bullet_{\gamma\,} d_i=
a{\,}_\gamma\!\bullet_{\gamma\,} (d_i^{-1}{}_\gamma\!\bullet_{\gamma\,} d_i)=a{\,}_\gamma\!\bullet_{\gamma\,} (d_i^{-1}d_i)=a$, where
$a\in \cO_q(\rSL_n)$, and more in general $a\in  (\cO_q(\rSL_n)
S_{i_1}^{-1}...  \widehat{S_{i_s}^{-1}}... S_{i_s}^{-1})_\gamma$,
shows that the  twist of the
localizations of $\cO_{q}(\rSL_n)$, are just the  localizations of the twisted quantum group $\cO_{q,\gamma}(\rSL_n)$, i.e.,
$$(\Sigma\circ \Gamma\circ \cF) (U_I):=(\Sigma\circ\Gamma)(\cO_q(\rSL_n) S_{i_1}^{-1}\ldots S_{i_s}^{-1})=
\cO_{q,\gamma}(\rSL_n) S_{i_1}^{-1}\ldots S_{i_s}^{-1}~,$$
$I=\{i_1,\ldots i_s\}$. 
This shows that the Ore conditions are satisfied for the  localizations of $
\cO_{q,\gamma}(\rSL_n)$ and that the corresponding sheaf constructed as in Theorem \ref{main} is $\Sigma\circ
\Gamma\circ \cF$ .
We summarize this result in the following theorem.
\begin{theorem}\label{mainproj-thmTW} 
The assignment:
$$
U_I \mapsto (\Sigma\circ \Gamma \circ \cF)(U_I)=\cO_{q,\gamma}(\rSL_n)S_{i_1}^{-1}\dots S_{i_s}^{-1},
\qquad I=\{i_1, \dots, i_s\}
$$
defines a quantum principal
bundle on 
the quantum ringed space \break
$(\rSL_n(\C)/P, (\Sigma\circ \Gamma \circ \cF)^{\coinv \,
  \cO_{q,\gamma}(P)})$, with  structure sheaf  $(\Sigma\circ \Gamma \circ \cF)^{\coinv \, \cO_{q,\gamma}(P)}$
given by projective localizations of the multiparametric quantum homogeneous projective  space $\tilde{\cO}_{q,\gamma}(\bP^{n-1})=\tilde{\cO}_{q,\gamma}(\rSL_n/P)$.
\end{theorem}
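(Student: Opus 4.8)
The plan is to deduce this theorem by iterating the two twist constructions of Theorems \ref{TWIST} and \ref{SIGMAsheaf}, starting from the quantum principal bundle $\cF$ of Theorem \ref{mainproj-thm}, and then to identify the resulting sheaf, its total space and its structure sheaf with the objects named in the statement by invoking the explicit computations already carried out in the paragraphs above. An equivalent route is to exhibit the construction directly as an instance of Theorem \ref{main} with data $\big(\cO_{q,\gamma}(\rSL_n),\cO_{q,\gamma}(P),\,d=a_{11},\,d_i=a_{i1}\big)$; I will take the twist route as primary since it makes Theorems \ref{TWIST} and \ref{SIGMAsheaf} do the bookkeeping, and note the Theorem \ref{main} viewpoint at the end.

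First I would record that $\cF$ is a sheaf of $(K,H)$-bicomodule algebras with $H=\cO_q(P)$ and external symmetry $K=\cO(\mathbb{T}^{n-1})$: the left $K$-coaction $\rho=(pr\otimes id)\Delta$ on $\cO_q(\rSL_n)$ extends uniquely to the localizations $\cF(U_I)$ and commutes with the right $\cO_q(P)$-coaction, the $H$-comodule $(\cO_q(P),\Delta)$ carries the compatible left $K$-comodule structure $(p\otimes id)\Delta$, and the cleaving maps $j_i$ of Proposition \ref{lemmaqpb2} are $(K,H)$-bicomodule algebra maps. Applying Theorem \ref{SIGMAsheaf} with the exponential $2$-cocycle \eqref{cocycle-exp} on $K$ then gives that $\Sigma\circ\cF$ is an $\cO_q(P)$-principal bundle over $(\rSL_n(\C)/P,\Sigma\circ\cF^{\coinv\,\cO_q(P)})$. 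Pulling \eqref{cocycle-exp} back along $\cO_q(P)\to\cO(\mathbb{T}^{n-1})$ to a $2$-cocycle $\gamma$ on $H=\cO_q(P)$ and applying Theorem \ref{TWIST}, the sheaf $\Sigma\circ\Gamma\circ\cF$ (the two twists act on independent bicomodule structures, so $\Gamma$ and $\Sigma$ commute) is an $\cO_{q,\gamma}(P)$-principal bundle over the same ringed space, since $\cO_q(P)$ and $\cO_{q,\gamma}(P)$ share their coproduct, whence $\Sigma\circ\cF^{\coinv\,\cO_q(P)}=(\Sigma\circ\Gamma\circ\cF)^{\coinv\,\cO_{q,\gamma}(P)}$ as sheaves of algebras.

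Next I would make the total space and structure sheaf explicit, reusing the identifications established just before the statement. The global sections $(\Sigma\circ\Gamma\circ\cF)(\rSL_n(\C))=\cO_{q,\gamma}(\rSL_n)$ form the Hopf-algebra twist of $\cO_q(\rSL_n)$, with $\cO_{q,\gamma}(P)=\cO_{q,\gamma}(\rSL_n)/I_{q,\gamma}(P)$ a quotient Hopf algebra; since $\gamma(t_i^{-1}\otimes t_i)=1=\gamma(t_i\otimes t_i^{-1})$, the element $d_i^{-1}$ remains the two-sided inverse of $d_i=a_{i1}$ in the twisted product, so the twist of $\cO_q(\rSL_n)S_{i_1}^{-1}\dots S_{i_s}^{-1}$ is the localization $\cO_{q,\gamma}(\rSL_n)S_{i_1}^{-1}\dots S_{i_s}^{-1}$; in particular each $S_i$ is still Ore in $\cO_{q,\gamma}(\rSL_n)$ and iterated localizations remain order-independent, so the hypotheses of Theorem \ref{main} hold in the twisted setting. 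Because twisting does not alter coproducts, Proposition \ref{coinv-prop} applies verbatim to the twisted sheaf: $(\Sigma\circ\Gamma\circ\cF)(U_i)^{\coinv\,\cO_{q,\gamma}(P)}$ is generated by the $d_jd_i^{-1}$ for the twisted product ${}_\gamma\!\bullet_\gamma$, which by the (graded) Ore hypothesis is the degree-zero subalgebra of $\tilde{\cO}_{q,\gamma}(\bP^{n-1})S_i^{-1}$, i.e. the projective localization of $\tilde{\cO}_{q,\gamma}(\bP^{n-1})=\tilde{\cO}_{q,\gamma}(\rSL_n/P)$; local cleftness is witnessed either by the preceding twist argument or, directly, by the algebra maps $(\Sigma\circ\Gamma)(j_i)$.

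The step I expect to be the main obstacle — and it is essentially the one already dispatched in the discussion preceding the statement — is verifying that twisting commutes with localization: one must check that the twisted product ${}_\gamma\!\bullet_\gamma$ on $\cO_q(\rSL_n)S_{i_1}^{-1}\dots S_{i_s}^{-1}$ restricts on $\cO_q(\rSL_n)$ to the Hopf-algebra twist, and that the localized elements $d_i^{-1}$ behave correctly, which is precisely where the exponential form of $\gamma$ (so that it is trivial on $t_i^{-1}\otimes t_i$) is used. Granting this, combining the two structural theorems with the explicit identifications above yields the statement.
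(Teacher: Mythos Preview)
Your proposal is correct and follows essentially the same approach as the paper: the theorem is stated there as a summary of the preceding discussion, which applies Theorems \ref{SIGMAsheaf} and \ref{TWIST} to the bundle $\cF$ of Theorem \ref{mainproj-thm}, identifies the twisted localizations with localizations of $\cO_{q,\gamma}(\rSL_n)$ via the key computation $\gamma(t_i^{-1}\otimes t_i)=1$, and then recognizes the result as an instance of Theorem \ref{main}. Your identification of the localization-versus-twist compatibility as the crux is exactly right, and the paper handles it in the same way you outline.
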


\begin{remark}
An immediate application of this result is that the   $\cO_{q,\gamma}(P)$-principal bundle
$\Gamma\circ \Sigma\circ \cF$ is locally trivial with cleaving maps $(\Sigma\circ\Gamma)
(j_i): \cO_{q,\gamma}(P)\rightarrow  (\Sigma\circ \Gamma\circ \cF)
(U_i)$ that are algebra maps (recall Remark \ref{cleft-rem}). Indeed   
$j_i: \cO_{q}(P)\rightarrow   \cF
(U_i)$ in Proposition \ref{lemmaqpb2} are $(\cO(\mathbb{T}^{n-1}),\cO_q(P))$-bicomodule
algebra maps, and the result follows applying the functor $\Gamma\circ
\Sigma$ and recalling \eqref{OPqg}. 
\end{remark}

\begin{remark}\label{64}
Since the left and right coactions commute we have $\Sigma\circ \Gamma=\Gamma\circ \Sigma$ (cf. \cite[Proposition
2.27]{abps})  and hence $\Sigma\circ \cF=\Gamma^{-1}\circ
(\Sigma\circ\Gamma\circ 
\cF)$. Applying  Remark \ref{ltlc} to the locally trivial bundle $\Sigma\circ\Gamma\circ
\cF$ (and considering the functor $\Gamma^{-1}$ instead of $\Gamma$)
we conlcude that
the extensions $(\Sigma\circ \cF)(U_i)^{\,\coinv\,
  {\cO_{q}(P)}}\subset (\Sigma\circ \cF)(U_i)$
are cleft and nontrivial.
So that the ${\cO_{q}(P)}$-principal bundle $\Sigma\circ \cF$  locally
is cleft and nontrivial.
\end{remark}

\end{document}